\def\MODE{1}
\else\graphicspath{{./graphics_bw_eps/}}\fi
\newcommand{\cvx}{S(m,L)}
\newcommand{\cvxx}{S(0,L-m)}
\newcommand{\cvxw}{S(0,L)}
\numberwithin{equation}{section}
\title{Analysis and Design of Optimization Algorithms via Integral Quadratic Constraints} 
\author{Laurent Lessard\and  Benjamin Recht\and Andrew Packard \thanks{L.~Lessard is with the Department of Electrical and Computer Engineering at the University of Wisconsin--Madison. \texttt{laurent.lessard@wisc.edu}. B.~Recht is with the Departments of Electrical Engineering, Computer Science, and Statistics at the University of California, Berkeley. \texttt{brecht@berkeley.edu}. A.~Packard is with the Department of Mechanical Engineering at the University of California, Berkeley. \texttt{apackard@berkeley.edu}.}}
\author{Laurent Lessard \and  Benjamin Recht \and Andrew Packard }
\begin{document}

\maketitle

%%%%%%%%%%%%%%%%%%%%%%%%%%%%%%%%%%%%%%%%%%%%%%%%%%%%%%%%%%%%%%%%%%%%%%%%%%%%%%%
% the footnotes

\begin{abstract}
This manuscript develops a new framework to analyze and design iterative optimization algorithms built on the notion of Integral Quadratic Constraints (IQC) from robust control theory.  IQCs provide sufficient conditions for the stability of complicated interconnected systems, and these conditions can be checked by semidefinite programming.  We discuss how to adapt IQC theory to study optimization algorithms, proving new inequalities about convex functions and providing a version of IQC theory adapted for use by optimization researchers.  Using these inequalities, we derive numerical upper bounds on convergence rates for the Gradient method, the Heavy-ball method, Nesterov's accelerated method, and related variants by solving small, simple semidefinite programming problems.  We also briefly show how these techniques can be used to search for optimization algorithms with desired performance characteristics, establishing a new methodology for algorithm design.
\end{abstract}

\if\MODE2
\begin{keywords} Convex optimization, first-order methods, Nesterov's method, Heavy-ball method, proximal gradient methods, semidefinite programming, integral quadratic constraints, control theory.
\end{keywords}
\fi
%%%%%%%%%%%%%%%%%%%%%%%%%%%%%%%%%%%%%%%%%%%%%%%%%%%%%%%%%%%%%%%%%%%%%%%%%%%%%%%
\section{Introduction}

Convex optimization algorithms provide a powerful toolkit for robust, efficient, large-scale optimization algorithms.  They provide not only effective tools for solving optimization problems, but are guaranteed to converge to accurate solutions in provided time budgets~\cite{NesterovBook,nesterov-nemirovskii}, are robust to errors and time delays~\cite{Bertsekas86,Nemirovski09}, and are amendable to declarative modeling that decouples the algorithm design from the problem formulation~\cite{tfocs,gb08,YALMIP}.  However, as we push up against the boundaries of the convex analysis framework, try to build more complicated models, and aim to deploy optimization systems in highly complex environments, the mathematical guarantees of convexity start to break. The standard proof techniques for analyzing convex optimization rely on deep insights by experts and are devised on an algorithm-by-algorithm basis. It is thus not clear how to extend the toolkit to more diverse scenarios where multiple objectives---such as robustness, accuracy, and speed---need to be delicately balanced.

This paper marks an attempt at providing a systematized approach to the design and analysis optimization algorithms using techniques from control theory.  Our strategy is to adapt the notion of an \emph{integral quadratic constraint} from robust control theory~\cite{megrantzer}.  These constraints link sequences of inputs and outputs of operators, and are ideally suited to proving algorithmic convergence.  We will see that for convex functions, we can derive these constraints using only the standard first-order characterization of convex functions, and that these inequalities will be sufficient to reduce the analysis of first-order methods to the solution of a very small semidefinite program. Our IQC framework puts the analysis of algorithms in a  unified proof framework, and enables new analyses of algorithms by minor perturbations of existing proofs. This new system aims to simplify and automate the analysis of optimization programs, and perhaps to open new directions for algorithm design.

Our methods are inspired by the recent work of Drori and Teboulle \cite{drori_teboulle}.  In their manuscript, the authors propose writing down the first-order convexity inequality for all steps of an algorithmic procedure.  They then derive a semidefinite program that analytically verifies very tight bounds for the convergence rate for the Gradient method, and numerically precise bounds for convergence of Nesterov's method and other first-order methods.  The main drawback of the Drori and Teboulle approach is that the size of the semidefinite program scales with the number of time steps desired.  Thus, it becomes computationally laborious to analyze algorithms that require more than a few hundred iterations.  

Integral quadratic constraints will allow us to circumvent this issue. A typical example of one of our semidefinite programs might have a $3\times 3$ positive semidefinite decision variable, 3 scalar variables, a $5\times 5$ semidefinite cone constraint, and 4 scalar constraints. Such a problem can be solved in less than 10 milliseconds on a laptop with standard solvers.

We are able to analyze a variety of methods in our framework.  We show that our framework recovers the standard rates of convergence for the Gradient method applied to strongly convex functions.  We show that we can numerically estimate the performance of Nesterov's method. Indeed, our analysis provides slightly sharper bounds than Nesterov's proof. We show how our system fails to certify the stability of the popular Heavy-ball method of Polyak for strongly convex functions whose condition ratio is larger than 18.  Based on this analysis, we are able to construct a one-dimensional strongly convex function whose condition ratio is 25 and prove analytically that the Heavy-ball method fails to find the global minimum of this function.  This suggests that our tools can also be used as a way to guide the construction of counterexamples.  

We show that our methods extend immediately to the projected and proximal variants of all the first order methods we analyze.  We also show how to extend our analysis to functions that are convex but not strongly convex, and provide bounds on convergence that are within a logarithmic factor of the best upper bounds.  We also demonstrate that our methods can bound convergence rates when the gradient is perturbed by relative deterministic noise.  We show how different parameter settings lead to very different degradations in performance bounds as the noise increases.

Finally, we turn to algorithm \emph{design}.  Since our semidefinite program takes as input the parameters of our iterative scheme, we can search over these parameters.  For simple two-step methods, our algorithms are parameterized by 3 parameters, and we show how we can derive first-order methods that achieve nearly the same rate of convergence as Nesterov's accelerated method but are  more robust to noise.

The manuscript is organized as follows. We begin with a discussion of discrete-time dynamical system and how common optimization algorithms can be viewed as feedback interconnections between a known \emph{linear} system with an uncertain \emph{nonlinear} component.  We then turn to show how quadratic Lyapunov functions can be used to certify rates of convergence for optimization problems and can be found by semidefinite programming.  This immediately leads to the notion of an integral quadratic constraint. Another contribution of this work is a new form of IQC analysis geared specifically toward rate-of-convergence conclusions, and accessible to optimization researchers.
We also discuss their history in robust control theory and how they can be derived.  With these basic IQCs in hand, we then turn to analyzing the Gradient method and Nesterov method, their projected and proximal variants, and their robustness to noise.  We discuss one possible brute-force technique for designing new algorithms, and how we can outperform existing methods.  Finally, we conclude with many directions for future work.

\subsection{Notation and conventions}\label{sec:notation}

\paragraph{Common matrices.} The $d\times d$ identity matrix and zero matrix are denoted $I_d$ and $0_d$, respectively. Subscripts are omitted when they are to be inferred by context.

\paragraph{Norms and sequences.} We define $\ltwoe^n$ to be the set of all one-sided sequences $x: \N \to \R^n$. We sometimes omit $n$ and simply write $\ltwoe$ when the superscript is clear from context. 
The notation $\|\cdot\|:\R^n\to \R$ denotes the standard 2-norm. The subset $\ltwo \subset \ltwoe$ consists of all square-summable sequences. In other words, $x\in\ltwo$ if and only if 
$
\sum_{k=0}^\infty \|x_k\|^2
$
is convergent.

\paragraph{Convex functions.} For a given $0<m<L$, we define $\cvx$ to be the set of functions $f:\R^d\to \R$ that are continuously differentiable, strongly convex with parameter $m$, and have Lipschitz gradients with parameter $L$. In other words, $f$ satisfies
\[
m \|x - y\|^2 \le (\grad f(x) - \grad f(y))^\tp (x-y) \le L \|x-y\|^2
\quad\text{for all }x,y\in\R^d
\]
We call $\kappa \defeq L/m$ the \emph{condition ratio} of $f\in\cvx$. We adopt this terminology to distinguish the condition ratio of a function from the related concept of \emph{condition number} of a matrix. The connection is that if $f$ is twice differentiable, we have the bound: $\cond(\hess f(x)) \le \kappa$ for all $x\in\R^d$, where $\cond(\cdot)$ is the condition number.

\paragraph{Kronecker product}
The Kronecker product of two matrices $A\in\R^{m\times n}$ and $B\in\R^{p\times q}$ is denoted $A\otimes B \in \R^{mp\times nq}$ and given by:
\[
A\otimes B = \bmat{ A_{11} B & \dots  & A_{1n} B \\
	                \vdots   & \ddots & \vdots   \\
	                A_{m1} B & \dots  & A_{mn} B }
\]
Two useful properties of the Kronecker product are that $(A\otimes B)^\tp = A^\tp \otimes B^\tp$ and that $(A\otimes B)(C\otimes D) = (AC)\otimes (BD)$ whenever the matrix dimensions are such that the products $AC$ and $BD$ make sense.

\section{Optimization algorithms as dynamical systems}\label{sec:opt-dyn}

A linear dynamical system is a set of recursive linear equations of the form
\begin{subequations}\label{sseq}
\begin{align}
	\xi_{k+1} &= A \xi_k + B u_k\\
	y_k &= C \xi_k + D u_k\,.
\end{align}
\end{subequations}
At each timestep $k=0,1,\dots$, $u_k \in \R^d$ is the \emph{input}, $y_k \in \R^d$ is the \emph{output}, and $\xi_k\in\R^m$ is the \emph{state}.
%When $u_k=0$, $\xi_k$ is sufficient information to predict all future outputs $y_k$.
We can write the dynamical system~\eqref{sseq} compactly by stacking the matrices into a block using the notation
\[
\stsp{A}{B}{C}{D}.
\]
We can connect this linear system in \emph{feedback} with a nonlinearity $\phi$ by defining the rule
\begin{subequations}\label{eq:lure}
\begin{align}
	\xi_{k+1}&= A \xi_k + B u_k\\
	y_k &= C \xi_k + D u_k\\
	u_k &= \phi(y_k)\,.
\end{align}
\end{subequations}
In this case, the output is transformed by the map $\phi:\R^d\to\R^d$ and is then used as the input to the linear system.

In this paper, we will be interested in the case when the interconnected nonlinearity has the form $\phi(y) = \grad f(y)$ where $f\in \cvx$.  In particular, we will consider algorithms designed to solve the optimization problem
\begin{equation}\label{eq:main-opt}
\minimize_{x\in\R^n} f(x)
\end{equation}
as dynamical systems and see how this new viewpoint can give us insights into convergence analysis.  Section~\ref{sec:prox-point} considers variants of~\eqref{eq:main-opt} where the decision variable $x$ is constrained or $f$ is non-smooth.

Standard first order methods such as the Gradient method, Heavy-ball method, and Nesterov's accelerated method, can all be cast in the form~\eqref{eq:lure}.  In all cases, the nonlinearity is the mapping $\phi(y) = \grad f(y)$.  The state transition matrices $A$, $B$, $C$, $D$ differ for each algorithm.  The Gradient method can be expressed as
\begin{equation}\label{aa}
\left[\begin{array}{c|c}
    A & B \\ \hlinet
    C & D
\end{array}\right] = \left[\begin{array}{c|c}
    I_d & -\alpha I_d \\ \hlinet
    I_d & 0_d
\end{array}\right]\,.
\end{equation}
To verify this, substitute~\eqref{aa} into \eqref{eq:lure} and obtain
\[
\begin{aligned}
	\xi_{k+1}&= \xi_k - \alpha u_k\\
y_k &= \xi_k \\
u_k &= \grad f(y_k)
\end{aligned}
\]
Eliminating $y_k$ and $u_k$ and renaming $\xi$ to $x$ yields
\[
	x_{k+1} = x_k - \alpha \grad f(x_k)
\]
which is the familiar Gradient method with constant stepsize.
Nesterov's accelerated method for strongly convex functions is given by the dynamical system
\begin{equation}\label{bb}
\left[\begin{array}{c|c}
    A & B \\ \hlinet
    C & D
\end{array}\right] =\left[\begin{array}{cc|c}
    (1+\beta)I_d & -\beta I_d  & -\alpha I_d\\
    I_d & 0_d & 0_d\\\hlinet
    (1+\beta)I_d & -\beta I_d & 0_d
\end{array}\right]
\end{equation}
Verifying that \eqref{bb} is equivalent to Nesterov's method takes only slightly more effort than it did for the Gradient method.  Substituting~\eqref{bb} into~\eqref{eq:lure} now yields
\begin{subequations}
\begin{align}
\xi^{(1)}_{k+1} &=  (1+\beta) \xi^{(1)}_k -\beta \xi^{(2)}_k -\alpha u_k \label{eq:nest-delay1}\\
\label{eq:nest-delay} \xi^{(2)}_{k+1} &= \xi^{(1)}_k\\ \label{eq:nest-delay2}
y_k &= (1+\beta) \xi^{(1)}_k - \beta \xi^{(2)}_k\\ \label{eq:nest-delay3}
u_k &= \grad f (y_k)
\end{align}
\end{subequations}
Note that~\eqref{eq:nest-delay} asserts that the partial state $\xi^{(2)}$ is a delayed version of the state $\xi^{(1)}$. Substituting \eqref{eq:nest-delay} into \eqref{eq:nest-delay1} gives the simplified system
\begin{align*}
\xi^{(1)}_{k+1} &=  (1+\beta) \xi^{(1)}_k -\beta \xi^{(1)}_{k-1} -\alpha u_k\\
y_k &= (1+\beta) \xi^{(1)}_k - \beta \xi^{(1)}_{k-1}\\
u_k &= \grad f (y_k) 
\end{align*}
Eliminating $u_k$ and renaming $\xi^{(1)}$ to $x$ yields the common form of Nesterov's method
\begin{align*}
x_{k+1} &=  y_k -\alpha \grad f (y_k)\\
y_k &= (1+\beta) x_k - \beta x_{k-1}\,.
\end{align*}
Note that other variants of this algorithm exist for which the $\alpha$ and $\beta$ parameters are updated at each iteration. In this paper, we restrict our analysis to the constant-parameter version above. The Heavy-ball method is given by
\begin{equation}\label{cc}
\left[\begin{array}{c|c}
    A & B \\ \hlinet
    C & D
\end{array}\right] =\left[\begin{array}{cc|c}
    (1+\beta)I_d & -\beta I_d  & -\alpha I_d\\
    I_d & 0_d & 0_d\\ \hlinet
    I_d & 0_d & 0_d
\end{array}\right]
\end{equation}
One can check by similar analysis that~\eqref{cc} is equivalent to the update rule
\[
	x_{k+1} = x_{k} - \alpha \grad f(x_k) + \beta(x_k - x_{k-1})\,.
\]

\subsection{Proving algorithm convergence}\label{sec:howdoweprove}
Convergence analysis of convex optimization algorithms typically follows a two step procedure.  First one must show that the algorithm has a fixed point that solves the optimization problem in question.  Then, one must verify that from a reasonable starting point, the algorithm converges to this optimal solution at a specified rate.

In dynamical systems, such proofs are called stability analysis. By writing common first order methods as dynamical systems, we can unify their stability analysis.  For a general problem with minimum occurring at $y_\star$, a necessary condition for optimality is that $u_\star = \grad f(y_\star) = 0$. Substituting into~\eqref{sseq}, the fixed point satisfies
\[
	y_\star = C \xi_\star \quad\text{and}\quad
	\xi_\star = A \xi_\star
\]
In particular, $A$ must have an eigenvalue of $1$. If the blocks of $A$ are diagonal as in the Gradient, Heavy-ball, or Nesterov methods shown above, then the eigenvalue of $1$ will have a geometric multiplicity of at least $d$.

Proving that all paths lead to the optimal solution requires more effort and constitutes the bulk of what is studied herein.  Before we proceed for general convex $f$, it is instructive to study what happens for quadratic $f$.

\subsection{Quadratic problems}\label{sec:quadratic_case}
Suppose $f$ is a convex, quadratic function
$
	f(y) = \tfrac{1}{2} y^\tp Q y -p^\tp y +r
$,
where $m I_d \preceq Q \preceq L I_d$ in the positive definite ordering.  The gradient of $f$ is simply
$
	\grad f(y) = Qy - p
$	
and the optimal solution is $y_\star = Q^{-1}p$.  

What happens when we run a first order method on a quadratic problem?  Assume throughout this section that $D=0$.  Substituting the equation for $y_\star$ and $\grad f(y)$ back into~\eqref{eq:lure}, we obtain the system of equations:
\begin{align*}
\xi_{k+1} &= A \xi_k + B u_k \\
y_k &= C \xi_k \\
u_k &= \grad f(y_k) = Q y_k - p = Q(y_k - y_\star)
\end{align*}
Now make use of the fixed-point equations $y_\star = C \xi_\star$ and $\xi_\star = A \xi_\star$ and we obtain $u_k = QC(\xi_k - \xi_\star)$. Eliminating $y_k$ and $u_k$ from the above equations, we obtain
\begin{equation}\label{xx}
	\xi_{k+1}-\xi_\star= (A + B QC) (\xi_k-\xi_\star)
\end{equation}
Let $T\defeq A+BQC$ denote the closed-loop state transition matrix.  A necessary and sufficient condition for $\xi_k$ to converge to $\xi_\star$ is that the \emph{spectral radius} of $T$ is strictly less than $1$.  Recall that the spectral radius of a matrix $M$ is defined as the largest magnitude of the eigenvalues of $M$.  We denote the spectral radius by $\rho(M)$.
It is a fact that
\[
	\rho(M) \le \|M^k\|^{1/k}
	\qquad\text{for all $k$ and}\qquad
	\rho(M) = \lim_{k\to\infty} \|M^k\|^{1/k}
\]
where $\|\cdot\|$ is the induced $2$-norm. Therefore, for any $\epsilon > 0$, we have for all $k$ sufficiently large that $\rho(T)^k \le \|T^k\| \le (\rho(T)+\epsilon)^k$. Hence, we can bound the convergence rate:
\begin{align*}
\|\xi_{k}-\xi_\star\| = 
\|T^k(\xi_{0}-\xi_\star)\| \le
\|T^k\| \|\xi_{0}-\xi_\star\| \le
(\rho(T)+\epsilon)^k \|\xi_{0}-\xi_\star\|\,.
\end{align*}
So the spectral radius also determines the rate of convergence of the algorithm.
With only bounds on the eigenvalues of $Q$, we can provide conditions under which the algorithms above converge for quadratic $f$.

\begin{prop} \label{prop:quadratic-rates} The following table gives worst-case rates for different algorithms and parameter choices when applied to a class of \textbf{convex quadratic functions}. We assume here that $f:\R^d\to \R$ where $f(x)=\tfrac{1}{2} x^\tp Q x - p^\tp x + r$ and $Q$ is any matrix that satisfies $m I_d \preceq Q \preceq L I_d$.  We also define $\kappa \defeq L/m$. 
	
\begin{center}
	\smallskip
\begin{tabular}{|l|l|l|l|}
	\hlinet
	Method & Parameter choice & Rate bound & Comment \\ \hlinet
	Gradient & $\alpha = \frac{1}{L}$ & $\rho = 1-\frac{1}{\kappa}$ & popular choice \\
	Nesterov & $\alpha = \frac{1}{L},\, \beta = \frac{\sqrt{\kappa}-1}{\sqrt{\kappa}+1}$ & $\rho = 1-\frac{1}{\sqrt{\kappa}}$ & standard choice \\ \hlinet
	Gradient & $\alpha = \frac{2}{L+m}$ & $\rho = \frac{\kappa-1}{\kappa+1}$ & optimal tuning \\
	Nesterov & $\alpha = \frac{4}{3L+m},\, \beta = \frac{\sqrt{3\kappa+1}-2}{\sqrt{3\kappa+1}+2}$ & $\rho = 1-\frac{2}{\sqrt{3\kappa+1}}$ & optimal tuning \\
	Heavy-ball & $\alpha = \frac{4}{(\sqrt L + \sqrt m)^2},\, \beta = \bigl(\frac{\sqrt{\kappa}-1}{\sqrt{\kappa}+1}\bigr)^2$ & $\rho = \frac{\sqrt{\kappa}-1}{\sqrt{\kappa}+1}$ & optimal tuning \\\hline
\end{tabular}
\smallskip
\end{center}
\end{prop}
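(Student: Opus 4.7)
The plan is to exploit the special block structure shared by \eqref{aa}, \eqref{bb}, and \eqref{cc}: every block of $A$, $B$, $C$ is a scalar multiple of $I_d$. Consequently, for any orthogonal $U\in\R^{d\times d}$ the similarity by $I\otimes U$ commutes with the algorithm. Writing the spectral decomposition $Q = U\Lambda U^\tp$ where $\Lambda$ is diagonal with entries $\lambda_1,\ldots,\lambda_d \in [m,L]$, this similarity (after a suitable permutation) block-diagonalizes the closed-loop matrix $T = A + BQC$ appearing in \eqref{xx} into one scalar or $2\times 2$ block $T_{\lambda_i}$ per eigenvalue of $Q$. Since spectral radius is preserved by similarity, $\rho(T) = \max_i \rho(T_{\lambda_i})$, and the task reduces to bounding $\rho(T_\lambda)$ uniformly over $\lambda\in[m,L]$.

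For the Gradient method the reduction yields the scalar $T_\lambda = 1-\alpha\lambda$, so $\rho(T_\lambda) = |1-\alpha\lambda|$. Both Gradient rows of the table follow by a one-variable optimization of $|1-\alpha\lambda|$ over $[m,L]$: with $\alpha=1/L$ the maximum is attained at $\lambda=m$ and equals $1-1/\kappa$; with $\alpha=2/(L+m)$ the two endpoint values are equalized, giving $(\kappa-1)/(\kappa+1)$.

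For Nesterov's method and the Heavy-ball method, the same reduction produces the $2\times 2$ matrices
\[
T_\lambda^{\mathrm{Nest}} = \bmat{(1+\beta)(1-\alpha\lambda) & -\beta(1-\alpha\lambda) \\ 1 & 0}, \qquad T_\lambda^{\mathrm{HB}} = \bmat{1+\beta-\alpha\lambda & -\beta \\ 1 & 0},
\]
with characteristic polynomials $z^2-(1+\beta)(1-\alpha\lambda)z+\beta(1-\alpha\lambda)$ and $z^2-(1+\beta-\alpha\lambda)z+\beta$, respectively. When the discriminant is nonpositive the roots form a complex-conjugate pair whose modulus is the square root of the constant term: $\sqrt{\beta(1-\alpha\lambda)}$ for Nesterov and $\sqrt{\beta}$ for Heavy-ball. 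With the Heavy-ball optimal tuning the discriminant is nonpositive on all of $[m,L]$, making $\rho(T_\lambda)\equiv\sqrt{\beta} = (\sqrt\kappa-1)/(\sqrt\kappa+1)$. For each Nesterov tuning one again checks that the discriminant is nonpositive on $[m,L]$ and that the complex modulus is maximized at $\lambda=m$; substituting the closed forms of $\alpha$ and $\beta$ reduces the claim to a short algebraic identity such as $\beta(1-\alpha m) = (1-1/\sqrt\kappa)^2$ in the standard case, or the analogous identity $\beta(1-\alpha m) = (1-2/\sqrt{3\kappa+1})^2$ in the optimal case.

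The main obstacle is the regime analysis for the characteristic polynomials: one must verify that the chosen parameters really do keep the discriminant nonpositive across the entire range $\lambda\in[m,L]$, since if it becomes positive at some $\lambda$ the spectral radius jumps to a different expression involving two real roots and could in principle exceed the tabulated bound. This verification splits into a few subcases depending on the sign of $1-\alpha\lambda$ and the magnitude of $\beta$, but is otherwise routine. Once it is in place, the remainder of the proposition is purely algebraic substitution.
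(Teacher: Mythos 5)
Your reduction to per-eigenvalue $1\times 1$ or $2\times 2$ blocks, the characteristic polynomials, and the treatment of the Gradient rows, the standard Nesterov tuning, and the Heavy-ball row all match the paper's argument and are correct. However, there is a genuine error in your handling of the optimal Nesterov tuning $\alpha = \tfrac{4}{3L+m}$: you assert that the discriminant is nonpositive on all of $[m,L]$ and that the spectral radius is therefore $\sqrt{\beta(1-\alpha\lambda)}$, maximized at $\lambda=m$. This fails because $1-\alpha L = \tfrac{m-L}{3L+m} < 0$, so for $\lambda$ near $L$ the discriminant $\Delta = (1-\alpha\lambda)\bigl((1+\beta)^2(1-\alpha\lambda)-4\beta\bigr)$ is a product of two negative factors and hence strictly positive; the eigenvalues there are real and of opposite sign (their product $\beta(1-\alpha\lambda)$ is negative), and the spectral radius is $\tfrac12\bigl(|(1+\beta)(1-\alpha\lambda)|+\sqrt{\Delta}\bigr)$, not a complex modulus. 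The real-root branch cannot be dismissed: writing $s=\sqrt{3\kappa+1}$, at $\lambda=L$ the characteristic polynomial factors with roots $-(1-\tfrac{2}{s})$ and $\tfrac13(1-\tfrac{2}{s})$, so the spectral radius at $\lambda=L$ equals $1-\tfrac{2}{s}$ — exactly the tabulated rate, and equal to the complex-branch value at $\lambda=m$. That equalization of the two endpoint regimes is precisely what makes this tuning optimal, so the subcase you skipped is the heart of that row, not a routine check that comes out empty.

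The fix is the one the paper uses: show that $\max\{|\nu_1|,|\nu_2|\}$ is continuous and quasiconvex in $\lambda$ for fixed $(\alpha,\beta)$, so the maximum over $[m,L]$ is attained at an endpoint, and then evaluate \emph{both} endpoints — $\lambda=m$ via the repeated/complex root giving $1-\tfrac{2}{s}$, and $\lambda=L$ via the two real roots, also giving $1-\tfrac{2}{s}$. (For the standard tuning $\alpha=1/L$ you were safe only because $1-\alpha L = 0$ kills the $\lambda=L$ endpoint.) The rest of your proposal is sound and follows the same route as the paper, which likewise reduces to scalar blocks by diagonalizing $Q$ and analyzes the $2\times 2$ characteristic polynomials, though the paper explicitly flags the casework for the optimal Nesterov tuning rather than claiming the discriminant stays nonpositive.
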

All of these results are proven by elementary linear algebra and the bounds are tight. In other words, there exists a quadratic function that achieves the worst-case $\rho$. See Appendix~\ref{A:prop1proof} for more detail.

Unfortunately, the proof technique used in Proposition~\ref{prop:quadratic-rates} does not extend to the case where $f$ is a more general strongly convex function.   However, a different characterization of stability does generalize and will be described in Section~\ref{sec:iqc}. It turns out that for linear systems, stability is equivalent to the feasibility of a particular semidefinite program.  We will see in the sequel that similar semidefinite programs can be used to certify stability of nonlinear systems.

\begin{prop}\label{prop:lyap}
Suppose $T\in\R^{d\times d}$. Then $\rho(T) < \rho$ if and only if there exists a $P\succ 0$ satisfying
$T^\tp PT - \rho^2 P \prec 0$.
\end{prop}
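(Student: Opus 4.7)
The plan is to reduce the statement to a standard discrete-time Lyapunov inequality by rescaling, and then handle the two implications separately. Writing $\tilde T \defeq T/\rho$, the condition $T^\tp PT - \rho^2 P \prec 0$ is equivalent to $\tilde T^\tp P \tilde T - P \prec 0$, and $\rho(T) < \rho$ is equivalent to $\rho(\tilde T) < 1$. So the proposition amounts to the classical statement that $\tilde T$ is Schur stable if and only if the discrete Lyapunov inequality admits a positive definite solution.

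For the ``if'' direction, suppose such a $P \succ 0$ exists. Let $\lambda \in \C$ be any eigenvalue of $T$ with (possibly complex) eigenvector $v \ne 0$. Left- and right-multiplying $T^\tp PT - \rho^2 P \prec 0$ by $v^*$ and $v$ gives
\[
(|\lambda|^2 - \rho^2)\, v^* P v < 0.
\]
Since $P$ is real and positive definite, $v^* P v > 0$, so $|\lambda| < \rho$. As $\lambda$ was arbitrary, $\rho(T) < \rho$.

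For the ``only if'' direction, I would construct $P$ explicitly. Assuming $\rho(T) < \rho$, set
\[
P \defeq \sum_{k=0}^{\infty} (\tilde T^k)^\tp \tilde T^k, \qquad \tilde T = T/\rho.
\]
The series converges in norm: the Gelfand formula bound already quoted in Section~\ref{sec:quadratic_case} gives, for any $\epsilon$ with $\rho(\tilde T)+\epsilon<1$, an estimate $\|\tilde T^k\| \le M(\rho(\tilde T)+\epsilon)^k$, so $\|\tilde T^k\|^2$ is summable. Each summand is positive semidefinite, and the $k=0$ term is $I$, so $P \succeq I \succ 0$. A telescoping computation yields
\[
\tilde T^\tp P \tilde T - P = \sum_{k=1}^\infty (\tilde T^k)^\tp \tilde T^k - \sum_{k=0}^\infty (\tilde T^k)^\tp \tilde T^k = -I \prec 0,
\]
and multiplying through by $\rho^2$ gives $T^\tp PT - \rho^2 P = -\rho^2 I \prec 0$, as required.

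The ``if'' direction is the easy half; the main obstacle is the ``only if'' direction, specifically justifying convergence of the matrix series. This hinges on the same spectral radius bound $\|\tilde T^k\|^{1/k} \to \rho(\tilde T) < 1$ already invoked earlier in the paper, so no additional machinery is needed. One should also note that in fact the construction shows the two conditions admit an equivalent strict-inequality formulation for any right-hand side $Q\succ 0$, but this refinement is not needed for the statement at hand.
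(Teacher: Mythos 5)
The paper does not actually supply a proof of Proposition~\ref{prop:lyap}---it declares the proof elementary and omits it---so there is no in-paper argument to compare yours against. Your proof is correct and is the standard one: the eigenvector test $v^*(T^\tp PT-\rho^2P)v<0$ (valid for complex $v$ since $T$ and $P$ are real, so $v^*T^\tp PTv=|\lambda|^2v^*Pv$ and $v^*Pv>0$) gives sufficiency, and the convergent Gramian-type series $P=\sum_{k\ge0}(\tilde T^k)^\tp\tilde T^k\succeq I$ gives necessity, with convergence correctly justified by the Gelfand-formula bound the paper already quotes in Section~\ref{sec:quadratic_case}. The only thing worth flagging is the implicit assumption $\rho>0$, which your rescaling $\tilde T=T/\rho$ requires (and which is needed for the statement itself, since for $\rho<0$ the left-hand side is vacuously false while the LMI can still be feasible); this is harmless in context, where $\rho$ is a positive convergence rate.
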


The proof of Proposition~\ref{prop:lyap} is elementary so we omit it.
The use of Linear Matrix Inequalities (LMI) to characterize stability of a linear time-invariant system dates back to Lyapunov~\cite{lyapunov}, and we give a more detailed account of this history in Section~\ref{sec:history_lyap}.
%
%\begin{proof}
%If $\rho(T) < \rho$, then the matrix
%\[
%	P \defeq \sum_{k=0}^\infty  \rho^{-2k}(T^k)^\tp (T^k) 
%\]
%is well defined, positive definite because the first term in the sum is a multiple of the identity, and satisfies $T^\tp PT - \rho^2 P =-\rho^2 I_d \prec 0$.  Conversely, assume the LMI has a solution $P\succ 0$ and let $\lambda$ be an eigenvalue of $T$ with corresponding eigenvector $q$.  Then
%\[
%	0>q^* T^\tp P T q - \rho^2 q^* P q = (|\lambda|^2-\rho^2) q^* P q 
%\]
%But since $q^* P q > 0$, we must have that $|\lambda|<\rho$.
%\end{proof}
%
Now suppose we are studying a dynamical system of the form $\xi_{k+1}-\xi_\star = T (\xi_k-\xi_\star)$ as in~\eqref{xx}.  Then, if there exists a $P\succ 0$ satisfying $T^\tp PT - \rho^2 P \prec 0$,
\begin{equation}\label{yy}
	(\xi_{k+1}-\xi_\star)^\tp P (\xi_{k+1}-\xi_\star) < \rho^2 (\xi_{k}-\xi_\star)^\tp P (\xi_{k}-\xi_\star)
\end{equation}
along all trajectories. If $\rho < 1$, then the sequence $\{\xi_{k}\}_{k\ge 0}$ converges linearly to $\xi_\star$. Iterating~\eqref{yy} down to $k=0$, we see that
\begin{equation}\label{yyy}
	(\xi_{k}-\xi_\star)^\tp P (\xi_{k}-\xi_\star) < \rho^{2k} (\xi_{0}-\xi_\star)^\tp P (\xi_{0}-\xi_\star)
\end{equation}
which implies that
\begin{equation}\label{z}
	\|\xi_{k}-\xi_\star\| < \sqrt{\cond(P)}\, \rho^k \|\xi_{0}-\xi_\star\|
\end{equation}
where $\cond(P)$ is the condition number of $P$. In what follows, we will generalize this semidefinite programming approach to yield feasibility problems that are sufficient to characterize when the closed loop system~\eqref{eq:lure} converges and which provide bounds on the distance to optimality as well.
The function
\begin{equation}\label{ww}
	V(\xi) = (\xi-\xi_\star)^\tp P (\xi-\xi_\star)
\end{equation}
is called a \emph{Lyapunov function} for the dynamical system.  This function strictly decreases over all trajectories and hence certifies that the algorithm is \emph{stable}, i.e., converges to nominal values.  The conventional method for proving stability of an electromechanical system is to show that some notion of \emph{total energy} always decreases over time.  Lyapunov functions provide a convenient mathematical formulation of this notion of total energy.

The question for the remainder of the paper is how can we search for Lyapunov-like functions that guarantee algorithmic convergence when $f$ is not quadratic.

\section{Proving convergence using integral quadratic constraints}\label{sec:iqc}

When the function being minimized is quadratic as explored in Section~\ref{sec:quadratic_case}, its gradient is affine and the interconnected dynamical system is a simple linear difference equation
whose stability and convergence rate is analyzed solely in terms of
eigenvalues of the closed-loop system.  When the cost function is not quadratic, the gradient update is not an affine function and hence a different analysis technique is required.

A popular technique in the control theory literature is to use~\emph{integral quadratic constraints} (IQCs) to capture features of the behavior of partially-known components. The term IQC was introduced in the seminal paper by Megretski and Rantzer~\cite{megrantzer}. In that work, the authors analyzed continuous time dynamical systems and the constraints involved integrals of quadratic functions, hence the name IQC. 

In the development that follows, we repurpose the classical IQC theory for use in algorithm analysis. This requires using discrete time dynamical systems so our constraints will involve sums of quadratics rather than integrals. We also adapt the theory in a way that allows us to certify a specific convergence rate in addition to stability.

\subsection{An introduction to IQCs}\label{subsec:iqc_defn}

IQCs provide a convenient framework for analyzing interconnected dynamical systems that contain components that are noisy, uncertain, or otherwise difficult to model. The idea is to replace this troublesome component by a quadratic constraint on its inputs and outputs that is known to be satisfied by all possible instances of the component. If we can certify that the newly constrained system performs as desired, then the original system must do so as well.

Suppose $\phi:\ltwoe\to\ltwoe$ is the troublesome function we wish to analyze. The equation $u=\phi(y)$ can be represented using a block diagram, as in Figure~\ref{fig:phi}.
\begin{figure}[ht]
\centering
\includegraphics{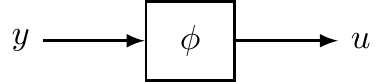}
\caption{Block-diagram representation of the map $\phi$.\label{fig:phi}}
\end{figure}

Although we do not know $\phi$ exactly, we assume that we have some knowledge of the constraints it imposes on the pair $(y,u)$. For example, suppose it is known that $\phi$ satisfies the following properties: 
\begin{enumerate}[(i)]
\item $\phi$ is static and memoryless: $\phi(y_0,y_1,\dots) = (g(y_0),g(y_1),\dots)$
for some $g:\R^d\to\R^d$.
\item $g$ is $L$-Lipschitz: $\|g(y_1)-g(y_2)\| \le L \|y_1-y_2\|$ for all $y_1,y_2\in\R^d$.
\end{enumerate}
Now suppose that $y = (y_0,y_1,\dots)$ is an arbitrary sequence of vectors in $\R^d$, and $u = \phi(y)$ is the output of the unknown function applied to $y$. Property~(ii) implies that $\|u_k-u_\star\| \le L \|y_k-y_\star\|$ for all $k$, where $(y_\star,u_\star)$ is any pair of vectors satisfying $u_\star = g(y_\star)$ that will serve as a reference point. In matrix form, this is
\begin{equation}\label{eq:norm_bound2}
\bmat{y_k -y_\star \\ u_k-u_\star}^\tp \bmat{ L^2 I_d & 0_d \\ 0_d & -I_d } \bmat{y_k-y_\star \\ u_k-u_\star} \ge 0
\qquad\text{for }k=0,1,\dots
\end{equation}%
\paragraph{Core idea behind IQC.} Instead of analyzing a system that contains $\phi$, we analyze the system where $\phi$ is removed, but we enforce the constraints~\eqref{eq:norm_bound2} on the signals $(y,u)$. Since~\eqref{eq:norm_bound2} is true for all admissible choices of~$\phi$, then any properties we can prove for the constrained system must hold for the original system as well.

Note that~\eqref{eq:norm_bound2} is rather special in that the quadratic coupling of $(y,u)$ is pointwise; it only manifests itself as separate quadratic constraints on each $(y_k,u_k)$. It is possible to specify more general quadratic constraints that couple different $k$ values, and the key insight above still holds. To do this, introduce auxiliary sequences $\zeta,z\in\ltwoe$ together with a map $\Psi$ characterized by the matrices $(A_\Psi,B_\Psi^y,B_\Psi^u,C_\Psi,D_\Psi^y,D_\Psi^u)$ and the recursion%
\begin{subequations}\label{psi}
\begin{align}
\zeta_0 &= \zeta_\star \label{zeta_init}\\
\zeta_{k+1} &= A_\Psi \zeta_k + B_\Psi^y  y_k +  B_\Psi^u u_k \\
z_{k} &= C_\Psi \zeta_k + D_\Psi^y y_k  +  D_\Psi^u u_k
\end{align}
\end{subequations}
where we will define the initial condition $\zeta_\star$ shortly. The equations~\eqref{psi} define an affine map $z = \Psi(y,u)$.  Assuming a reference point $(y_\star,u_\star)$ as before, we can define the associated reference $(\zeta_\star,z_\star)$ that is a fixed point of~\eqref{psi}. In other words,
\begin{subequations}\label{psi0}
\begin{align}
\zeta_\star &= A_\Psi \zeta_\star + B_\Psi^y  y_\star +  B_\Psi^u u_\star \label{psi01}\\
z_\star &= C_\Psi \zeta_\star + D_\Psi^y y_\star  +  D_\Psi^u u_\star\label{psi02}
\end{align}
\end{subequations}
We will require that $\rho(A_\Psi) < 1$, which ensures that~\eqref{psi0} has a unique solution~$(\zeta_\star,z_\star)$ for any choice of $(y_\star,u_\star)$. Note that the reference points are defined in such a way that if we use $y = (y_\star,y_\star,\dots)$ and $u = (u_\star,u_\star,\dots)$ in~\eqref{psi}, we will obtain $\zeta = (\zeta_\star,\zeta_\star,\dots)$ and $z = (z_\star,z_\star,\dots)$.

We then consider the quadratic forms $(z_k-z_\star)^\tp M (z_k-z_\star)$ for a given symmetric matrix $M$ (typically indefinite). Note that each such quadratic form is a function of $(y_0,\dots,y_k,u_0,\dots,u_k)$ that is determined by our choice of $(\Psi,M,y_\star,u_\star)$. In our previous example~\eqref{eq:norm_bound2}, $\Psi$ has no dynamics and the corresponding $\Psi$ and $M$ are
\begin{equation}\label{harff}
\Psi = \left[\begin{array}{c|cc}
A_\Psi & B_\Psi^y & B_\Psi^u \\ \hlinet
C_\Psi & D_\Psi^y & D_\Psi^u
\end{array}\right] =
\left[\begin{array}{c|cc}
0_d & 0_d & 0_d \\ \hlinet
0_d & I_d & 0_d \\
0_d & 0_d & I_d 
\end{array}\right]
\qquad
M = \bmat{L^2 I_d & 0_d \\ 0_d & -I_d}
\end{equation}
In other words, if we use the definitions~\eqref{harff}, then $(z_k-z_\star)^\tp M (z_k-z_\star) \ge 0$ is the same as~\eqref{eq:norm_bound2}. In general, these sorts of quadratic constraints are called IQCs. We consider four different types of IQCs, which we now define.

\begin{defn}\label{def:iqc}
Suppose $\phi:\ltwoe^d\to\ltwoe^d$ is an unknown map and $\Psi:\ltwoe^d\times\ltwoe^d\to\ltwoe^m$ is a given linear map of the form~\eqref{psi} with $\rho(A_\Psi)<1$. Suppose $(y_\star,u_\star) \in \R^{2d}$ is a given reference point and let $(\zeta_\star,z_\star)$ be the unique solution of~\eqref{psi0}. Suppose $y\in\ltwo^d$ is an arbitrary square-summable sequence. Let $u = \phi(y)$ and let $z = \Psi(y,u)$ as in~\eqref{psi}. We say that $\phi$ satisfies the
\begin{enumerate}
\item \textbf{Pointwise IQC} defined by $(\Psi,M,y_\star,u_\star)$ if for all $y\in\ltwoe^d$ and $k\ge 0$,
\[
(z_k-z_\star)^\tp M (z_k-z_\star) \ge 0
\]
\item \textbf{Hard IQC} defined by $(\Psi,M,y_\star,u_\star)$ if for all $y\in\ltwoe^d$ and $k\ge 0$,
\[
\sum_{t=0}^k (z_t-z_\star)^\tp M (z_t-z_\star) \ge 0
\]
\item $\rho$-\textbf{Hard IQC} defined by $(\Psi,M,\rho,y_\star,u_\star)$ if for all $y\in\ltwoe^d$ and $k\ge 0$,
\[
\sum_{t=0}^k \rho^{-2t} (z_t-z_\star)^\tp M (z_t-z_\star) \ge 0
\]
\item $\textbf{Soft IQC}$ defined by $(\Psi,M,y_\star,u_\star)$ if for all $y\in\ltwo^d$,
\[
\sum_{t=0}^\infty (z_t-z_\star)^\tp M (z_t-z_\star) \ge 0
\qquad\text{(and the sum is convergent)}
\]
\end{enumerate}
\end{defn}
Note that the example~\eqref{eq:norm_bound2} is a pointwise IQC. Examples of the other types of IQCs will be described in Section~\ref{subsec:iqc_library}. Note that the sets of maps satisfying the various IQCs defined above are nested as follows:
\[
\{\text{all pointwise IQCs}\} \subset
\{\text{all $\rho$-hard IQCs, }\rho < 1\} \subset
\{\text{all hard IQCs}\} \subset
\{\text{all soft IQCs}\}
\]
For example, if $\phi$ satisfies a pointwise IQC defined by $(\Psi,M,y_\star,u_\star)$ then it must also satisfy the hard IQC defined by the same $(\Psi,M,y_\star,u_\star)$.
The notions of \emph{hard IQC} and the more general \emph{soft IQC} (sometimes simply called \emph{IQC}) were introduced in~\cite{megrantzer} and their relationship is discussed in~\cite{seiler13TAC}. These concepts are useful in proving that a dynamic system is stable, but do not directly allow for the derivation of useful bounds on convergence rates. The definitions of \emph{pointwise} and $\rho$-\emph{hard} IQCs are new, and were created for the purpose of better characterizing convergence rates, as we will see in Section~\ref{subsec:stab_perf}.

Finally, note that $y_\star$ and $u_\star$ are nominal inputs and outputs for the unknown $\phi$, and they can be tuned to certify different fixed points of the interconnected system. We will see in Section~\ref{subsec:stab_perf} that certifying a particular convergence rate to some fixed point does not require prior knowledge of fixed point; only knowledge that the fixed point exists.

%%%%%%%%%%%%%%%%%%%%%%%%%%%%%%%%%%%%%%%%%%%%%%%%%%%%%%%%%%%%%%%%%%%%%%%%%%%%%%%
\subsection{Stability and performance results}\label{subsec:stab_perf}

In this section, we show how IQCs can be used to prove that iterative algorithms converge and to bound the rate of convergence. In both cases, the certification requires solving a tractable convex program. We note that the original work on IQCs~\cite{megrantzer} only proved stability (boundedness). Some other works have addressed exponential stability~\cite{JonExp,RanMegExp,MegRanPartII}, but the emphasis of these works is on proving the \emph{existence} of an exponential decay rate, and so the rates constructed are very conservative. We require rates that are less conservative, and this is reflected in the inclusion of $\rho$ in the LMI of our main result, Theorem~\ref{thm:main}.

We will now combine the dynamical system framework of Section~\ref{sec:opt-dyn} and the IQC theory of Section~\ref{subsec:iqc_defn}. Suppose $G:\ltwoe^d\to\ltwoe^d$ is an affine map $u\mapsto y$ described by the recursion
\begin{subequations}\label{eqG}
\begin{align}
\xi_{k+1} &= A \xi_k + B u_k \\
y_k &= C \xi_k
\end{align}
\end{subequations}
where $(A,B,C)$ are matrices of appropriate dimensions. The map is affine rather than linear because of the initial condition $\xi_0$. As in Section~\ref{sec:opt-dyn}, $G$ is the iterative algorithm we wish to analyze, and using the general formalism of Section~\ref{subsec:iqc_defn}, $\phi$ is the nonlinear map $(y_0,y_1,\dots)\mapsto(u_0,u_1,\dots)$ that characterizes the feedback. Of course, this framework subsumes the special case of interest in which $u_k = \grad f(y_k)$ for each $k$. We assume that $\phi$ satisfies an IQC, and this IQC is characterized by a map $\Psi$ and matrix $M$. We can interpret $z = \Psi(y,u)$ as a filtered version of the signals $u$ and $y$.  These equations can be represented using a block-diagram as in Figure~\ref{fig:Gphi_blk2a}.

\begin{figure}[ht]
\centering
\begin{subfigure}[t]{0.47\linewidth}
	\centering
	\includegraphics{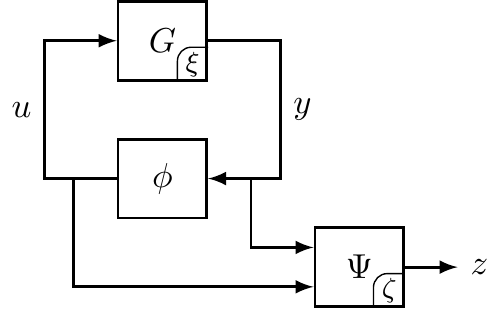}
	\caption{The auxiliary system $\Psi$ produces $z$, which is a filtered version of the signals $y$ and $u$.}\label{fig:Gphi_blk2a}
\end{subfigure}
\hfill
\begin{subfigure}[t]{0.47\linewidth}
	\centering
	\includegraphics{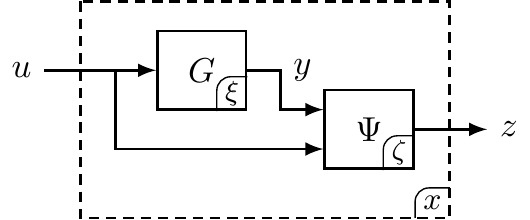}
	\caption{The nonlinearity $\phi$ is replaced by a constraint on $z$, so we may remove $\phi$ entirely.}\label{fig:Gphi_blk2b}
\end{subfigure}
\caption{Feedback interconnection between a system $G$ and a nonlinearity $\phi$. An IQC is a constraint on $(y,u)$ satisfied by $\phi$. We only analyze the constrained system and so we may remove the $\phi$ block entirely.}\label{fig:Gphi_blk2}
\end{figure}

Consider the dynamics of $G$ and $\Psi$ from~\eqref{eqG} and~\eqref{psi}, respectively. Upon eliminating~$y$, the recursions may be combined to obtain
\begin{subequations}\label{eq:comboss}
\begin{align}
\bmat{\xi_{k+1} \\ \zeta_{k+1}} &= 
\bmat{A & 0 \\ B_\Psi^y C & A_\Psi} \bmat{\xi_{k} \\ \zeta_{k}} +
\bmat{B \\ B_\Psi^u} u_k \\
z_k &= \bmat{D_\Psi^y C & C_\Psi} \bmat{\xi_{k} \\ \zeta_{k}} + D_\Psi^u u_k 
\end{align}
\end{subequations}
More succinctly, ~\eqref{eq:comboss} can be written as
\begin{equation}\label{eq:comboss2}
\begin{aligned}
x_{k+1}&=  \hat A x_k + \hat B u_k \\
z_k &=  \hat C x_k + \hat D u_k 
\end{aligned}
\qquad
\text{where we defined }x_k \defeq \bmat{ \xi_k \\ \zeta_k }
\end{equation}
The dynamical system~\eqref{eq:comboss2} is represented in Figure~\ref{fig:Gphi_blk2b} by the dashed box. Our main result is as follows.

\begin{thm}[Main result]\label{thm:main}
Consider the block interconnection of Figure~\ref{fig:Gphi_blk2a}.
Suppose $G$ is given by~\eqref{eqG} and $\Psi$ is given by~\eqref{psi}. 
Define $(\hat A,\hat B,\hat C,\hat D)$ as in \eqref{eq:comboss}--\eqref{eq:comboss2}.
Suppose $(\xi_\star,\zeta_\star,y_\star,u_\star,z_\star)$ is a fixed point of \eqref{eqG} and \eqref{psi}. In other words,
\begin{subequations}\label{koala}
\begin{align}
\xi_\star &= A \xi_\star + B u_\star \\
y_\star &= C\xi_\star \\
\zeta_\star &= A_\Psi \zeta_\star + B_\Psi^y  y_\star +  B_\Psi^u u_\star \\
z_\star &= C_\Psi \zeta_\star + D_\Psi^y y_\star  +  D_\Psi^u u_\star
\end{align}
\end{subequations}
Suppose $\phi$ satisfies the $\rho$-\textbf{hard IQC} defined by $(\Psi,M,\rho,y_\star,u_\star)$ where $0\le \rho \le 1$.  
Consider the following LMI.
\begin{equation}\label{eq:expLMI}
\bmat{ \hat A^\tp P \hat A - \rho^2 P & \hat A^\tp P \hat B \\
\hat B^\tp P \hat A & \hat B^\tp P \hat B} + \lambda
 \bmat{\hat C & \hat D}^\tp M \bmat{\hat C & \hat D}
\preceq 0
\end{equation}
If~\eqref{eq:expLMI} is feasible for some $P\succ 0$ and $\lambda \ge 0$, then for any $\xi_0$, we have \[
\|\xi_{k}-\xi_\star\| \le \sqrt{\cond(P)}\,\rho^{k}\, \|\xi_0-\xi_\star\|
\quad\text{for all }k
\]
where $\cond(P)$ is the condition number of $P$.
\end{thm}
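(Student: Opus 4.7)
The plan is to prove this via a quadratic Lyapunov/dissipation argument on the augmented state $x_k = (\xi_k,\zeta_k)$ of the combined system~\eqref{eq:comboss2}, with reference point $x_\star = (\xi_\star,\zeta_\star)$ supplied by~\eqref{koala}. I would set $V(x) \defeq (x-x_\star)^\tp P (x-x_\star)$ and aim to show $V(x_k) \le \rho^{2k}\, V(x_0)$; the factor $\sqrt{\cond(P)}$ in the conclusion then comes from the standard Rayleigh quotient conversion between $V$ and the Euclidean norm, exactly as in the chain~\eqref{yy}--\eqref{z} that was used to derive Proposition~\ref{prop:lyap}.

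The first step is to convert the matrix inequality~\eqref{eq:expLMI} into an inequality along trajectories. Left/right multiplying~\eqref{eq:expLMI} by the column vector $\bmat{x_k - x_\star \\ u_k - u_\star}$ and its transpose, and using the identities
\[
x_{k+1}-x_\star = \hat A(x_k-x_\star) + \hat B(u_k-u_\star),\qquad z_k - z_\star = \hat C(x_k-x_\star) + \hat D(u_k-u_\star),
\]
which follow from~\eqref{eq:comboss2} combined with the fixed-point equations~\eqref{koala}, yields the pointwise dissipation inequality
\[
V(x_{k+1}) - \rho^2\, V(x_k) + \lambda (z_k - z_\star)^\tp M (z_k - z_\star) \le 0
\]
valid at every $k \ge 0$ along every trajectory. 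Dividing through by $\rho^{2(k+1)}$ and telescoping from $k=0$ to $N-1$ then gives
\[
\rho^{-2N}\, V(x_N) - V(x_0) + \lambda\rho^{-2}\sum_{k=0}^{N-1} \rho^{-2k}(z_k - z_\star)^\tp M (z_k - z_\star) \le 0.
\]

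At this point the $\rho$-hard IQC hypothesis of Definition~\ref{def:iqc}(3), taken at horizon $N-1$, is exactly the statement that the partial sum is nonnegative; combined with $\lambda \ge 0$, this term can be dropped, leaving $V(x_N) \le \rho^{2N}\, V(x_0)$. The Rayleigh quotient bound $\lambda_{\min}(P)\|\cdot\|^2 \le V(\cdot) \le \lambda_{\max}(P)\|\cdot\|^2$ then gives $\|x_N-x_\star\| \le \sqrt{\cond(P)}\,\rho^N\,\|x_0-x_\star\|$. To pass from this augmented-state bound to the stated bound on $\|\xi_N-\xi_\star\|$, I would use two observations: the initial condition $\zeta_0=\zeta_\star$ built into the definition of $\Psi$ in~\eqref{zeta_init}, which implies $\|x_0-x_\star\| = \|\xi_0-\xi_\star\|$; and the trivial projection estimate $\|\xi_N-\xi_\star\| \le \|x_N-x_\star\|$.

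The step that deserves care, and the reason the $\rho$-hard variant of IQC is introduced at all, is the telescoping above: it needs a constraint on the weighted partial sum at every finite $N$, not an infinite-horizon constraint as in a soft IQC. A soft IQC would only furnish an inequality in the limit, and finite-time exponential decay could not be extracted from it without first proving the trajectory lies in $\ltwo$. Everything else — the vector-matrix specialization of the LMI, the telescoping, and the eigenvalue bound relating $V$ to $\|\cdot\|^2$ — is mechanical once the $\rho$-weighted partial-sum IQC is in hand.
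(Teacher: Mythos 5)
Your proposal is correct and follows essentially the same route as the paper's proof: multiply the LMI by the stacked deviation vector to get the pointwise dissipation inequality, weight by a power of $\rho^{-2}$ and telescope, invoke the $\rho$-hard IQC to discard the partial sum, and finish with the Rayleigh-quotient bound together with $\zeta_0=\zeta_\star$ and the projection estimate $\|\xi_k-\xi_\star\|\le\|x_k-x_\star\|$. The only difference is an immaterial normalization (you weight by $\rho^{-2(k+1)}$ where the paper uses $\rho^{-2k}$), and your closing remark about why a soft IQC would not suffice is a correct observation that the paper makes only implicitly.
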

\begin{proof}
Let $x,u,z\in\ltwoe$ be a set of sequences that satisfies~\eqref{eq:comboss2}. Suppose $(P,\lambda)$ is a solution of~\eqref{eq:expLMI}. Multiply~\eqref{eq:expLMI} on the left and right by $\bmat{(x_k-x_\star)^\tp & (u_k-u_\star)^\tp}$ and its transpose, respectively. Making use of~\eqref{eq:comboss2}--\eqref{koala}, we obtain
\begin{equation}\label{eq:simplerlmi}
(x_{k+1}-x_\star)^\tp P (x_{k+1}-x_\star) - \rho^2\, (x_k-x_\star)^\tp P (x_k-x_\star) + \lambda\, (z_k-z_\star)^\tp M (z_k-z_\star) \le 0
\end{equation}
Multiply~\eqref{eq:simplerlmi} by $\rho^{-2k}$ for each $k$ and sum over $k$. The first two terms yield a telescoping sum and we obtain
\begin{multline*}
\rho^{-2k+2}(x_{k}-x_\star)^\tp P (x_{k}-x_\star) - \rho^{2} (x_0-x_\star)^\tp P (x_0-x_\star) \\
+ \lambda\sum_{t=0}^{k-1} \rho^{-2t}(z_t-z_\star)^\tp M (z_t-z_\star) \le 0
\end{multline*}
Because $\phi$ satisfies the  $\rho$-hard IQC defined by $(\Psi,M,\rho,y_\star,u_\star)$, the summation part of the inequality is nonnegative for all $k$. Therefore,
\[
(x_{k}-x_\star)^\tp P (x_{k}-x_\star) \le \rho^{2k} (x_0-x_\star)^\tp P (x_0-x_\star)
\]
for all $k$ and consequently $\|x_{k}-x_\star\| \le \sqrt{\cond(P)}\,\rho^{k}\, \|x_0-x_\star\|$. Recall from~\eqref{eq:comboss2} that $x_k = (\xi_k,\zeta_k)$ and from~\eqref{zeta_init} that $\zeta_0 = \zeta_\star$. Therefore,
\begin{align*}
\|\xi_k-\xi_\star\|^2 &\le \|x_k-x_\star\|^2 \\
&\le \cond(P)\rho^{2k} \|x_0-x_\star\|^2 \\
&= \cond(P)\rho^{2k} \bl(\|\xi_0-\xi_\star\|^2 + \|\zeta_0-\zeta_\star\|^2\br)\\
&= \cond(P)\rho^{2k} \|\xi_0-\xi_\star\|^2
\end{align*}
and this completes the proof.
\end{proof}

\noindent We now make several comments regarding Theorem~\ref{thm:main}.

\paragraph{Pointwise and hard IQCs} Theorem~\ref{thm:main} can easily be adapted to other types of IQCs.
\begin{enumerate}[1.]
\item If the pointwise IQC defined by some $(\Psi,M,y_\star,u_\star)$ is satisfied, then so is the $\rho$-hard IQC defined by $(\Psi,M,\rho,y_\star,u_\star)$ for any $\rho$. Therefore, we may apply Theorem~\ref{thm:main} directly and ignore the $\rho$-hardness constraint. The smallest $\rho$ that makes~\eqref{eq:expLMI} feasible will correspond to the best exponential rate we can guarantee.

\item Hard IQCs are a special case of $\rho$-hard IQCs with $\rho = 1$. Therefore, if the LMI~\eqref{eq:expLMI} is feasible, Theorem~\ref{thm:main} guarantees that $\|\xi_{k}-\xi_\star\| \le \sqrt{\cond(P)} \|\xi_0-\xi_\star\|$. In other words, the iterates are bounded (but not necessarily convergent).

\item If a $\rho_1$-hard IQC is satisfied, then so is the $\rho$-hard IQC for any $\rho \ge \rho_1$. Also, if~\eqref{eq:expLMI} is feasible for some $\rho_2$, it will also be feasible for any $\rho \ge \rho_2$. Therefore, if we use a $\rho_1$-hard IQC and~\eqref{eq:expLMI} is feasible for $\rho_2$, then the smallest exponential rate we can guarantee is $\rho = \max(\rho_1,\rho_2)$.
\end{enumerate}

\paragraph{Multiple IQCs} Theorem~\ref{thm:main} can also be generalized to the case where $\phi$ satisfies multiple IQCs. Suppose $\phi$ satisfies the $\rho$-hard IQCs defined by $(\Psi_i,M_i,\rho,y_\star^{(i)},u_\star^{(i)})$ for $i=1,\dots,r$. Simply redefine the matrices $(\hat A,\hat B,\hat C,\hat D)$ in a manner analogous to~\eqref{eq:comboss2}, but where the output is now $(z_k^{(1)},\dots,z_k^{(r)})$. Instead of~\eqref{eq:expLMI}, use
\begin{equation}\label{eq:mainLMI2}
\bmat{ \hat A^\tp P \hat A - \rho^2 P & \hat A^\tp P \hat B \\
\hat B^\tp P \hat A & \hat B^\tp P \hat B} +
 \bmat{\hat C & \hat D}^\tp \bmat{\lambda_1 M_1 & &\\& \ddots & \\ & & \lambda_r M_r} \bmat{\hat C & \hat D}
\preceq 0
\end{equation}
where $\lambda_1,\dots,\lambda_r\ge 0$.
Thus, when~\eqref{eq:mainLMI2} is multiplied out as in~\eqref{eq:simplerlmi}, we now obtain
\begin{multline*}
(x_{k+1}-x_\star)^\tp P (x_{k+1}-x_\star) - \rho^2\, (x_k-x_\star)^\tp P (x_k-x_\star) \\
+ \sum_{i=1}^r\lambda_i\, (z^{(i)}_k-z_\star^{(i)})^\tp M_i (z^{(i)}_k-z_\star^{(i)}) \le 0
\end{multline*}
and the rest of the proof proceeds as in Theorem~\ref{thm:main}.

%%%%%%%%%%%%%%%%%%%%%%%%%%%%%%%%%%%%%%%%%%%%%%%%%%%%%%%%%%%%%%%%%%%%%%%%%%%%%%%%
\paragraph{Remark on Lyapunov functions}

In the quadratic case treated in Section~\ref{sec:quadratic_case}, a quadratic Lyapunov function is constructed from the solution $P$ in \eqref{ww}. In the case of IQCs, such a quadratic function cannot serve as a Lyapunov function because it does not strictly decrease over all trajectories. Nevertheless, Theorem~\ref{thm:main} shows how $\rho$-hard IQCs can be used to certify a convergence rate and no Lyapunov function is explicitly constructed. We can explain this difference more explicitly. If $V(x)$ is a Lyapunov function, then by definition it satisfies the properties;
\begin{enumerate}[(i)]
\item $\lambda_1 \|x-x_\star\|^2 \le V(x) \le  \lambda_2 \|x-x_\star\|^2$ for all $x$ and $k$.
\item $V(x_{k+1}) \le \rho^2 V(x_k)$ for all system trajectories $\{x_k\}_{k\ge 0}$.
\end{enumerate}
Property (ii) implies that
\begin{equation}\label{bee}
V(x_k) \le \rho^{2k} V(x_0)
\end{equation}
which, combined with Property (i) implies that $\|x_k-x_\star\| \le \sqrt{\lambda_2/\lambda_1}\, \rho^k \|x_0-x_\star\|$. In Theorem~\ref{thm:main}, we use $V(x) = (x-x_\star)^\tp P (x-x_\star)$, which satisfies (i) but not (ii). So $V(x)$ is \emph{not} a Lyapunov function in the technical sense. Nevertheless, we prove directly that~\eqref{bee} holds, and so the desired result still holds. That is, $V(x)$ serves the same purpose as a Lyapunov function.

\subsection{IQCs for convex functions}\label{subsec:iqc_library}
We will derive three IQCs that are useful for describing gradients of strongly convex functions: the \emph{sector} (pointwise) IQC, the \emph{off-by-one} (hard) IQC, and \emph{weighted off-by-one} ($\rho$-hard) IQC.
%The sector IQC is also known as co-coercivity in optimization.
In general, gradients of strongly convex functions satisfy an infinite family of IQCs, originally characterized by Zames and Falb for the single-input-single-output case~\cite{ZamFal}. A generalization of the Zames-Falb IQCs to multidimensional functions is derived in~\cite{HeaWilZF}. Both the sector and off-by-one IQCs are special cases of Zames-Falb, while the weighted off-by-one IQC is a convex combination of the sector and off-by-one IQCs. While the Zames-Falb family is infinite, the three simple IQCs mentioned above are the only ones used in this paper. IQCs can be used to describe many other types of functions as well, and further examples are available in \cite{megrantzer}. We begin with some fundamental inequalities that describe strongly convex function.

\begin{prop}[basic properties]\label{prop:basic_properties}
Suppose $f\in\cvx$. Then the following properties hold for all $x,y\in\R^d$.
\begin{subequations}\label{cvx_properties}
\begin{gather}
f(y) \le f(x) + \grad f(x)^\tp (y - x) + \frac{L}{2} \|y-x\|^2 \label{eq:basicdef}\\
( \grad f(y)-\grad f(x) )^\tp (y - x) \ge \frac{1}{L}\, \| \grad f(y)-\grad f(x) \|^2 \label{eq:coercive}\\
f(y) \ge f(x) + \grad f(x)^\tp (y-x) + \frac{1}{2L}\|\grad f(y) - \grad f(x)\|^2 \label{eq:coercive2}\\
\bmat{y - x \\ \grad f(y) - \grad f(x)}^\tp \bmat{ -2mLI_d & (L+m)I_d \\ (L+m)I_d & -2I_d }
\bmat{y - x \\ \grad f(y) - \grad f(x)} \ge 0 \label{eq:sector}
\end{gather}
\end{subequations}
\end{prop}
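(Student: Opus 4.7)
The plan is to prove each inequality in turn, reducing everything to the two building blocks: convexity and the descent-style upper bound. The key trick throughout is to absorb the linear and quadratic parts of $f$ into auxiliary functions so that standard convex-analysis facts can be applied directly.

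First, I would establish \eqref{eq:basicdef}. Setting $h(x) \defeq \tfrac{L}{2}\|x\|^2 - f(x)$, the definition of $\cvx$ gives $(\grad h(x)-\grad h(y))^\tp(x-y) = L\|x-y\|^2 - (\grad f(x)-\grad f(y))^\tp(x-y) \ge 0$, so $h$ is convex. Applying the first-order convexity inequality to $h$ and rearranging produces exactly \eqref{eq:basicdef}. Symmetrically, $f(x) - \tfrac{m}{2}\|x\|^2$ is convex, which yields the analogous strong-convexity lower bound that will be convenient later.

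Next I would derive \eqref{eq:coercive2} from \eqref{eq:basicdef} by the standard ``minimize the upper bound'' trick. Fix $x$ and consider $\phi_x(z) \defeq f(z) - \grad f(x)^\tp z$; it satisfies the same quadratic upper bound as $f$ (since the linear shift does not change second-order behavior), and $\grad \phi_x(x) = 0$, so $x$ is the global minimizer of $\phi_x$ (using convexity of $f$). Using \eqref{eq:basicdef} for $\phi_x$ at $z = w - \tfrac{1}{L}\grad \phi_x(w)$ gives $\phi_x(x) \le \phi_x(w) - \tfrac{1}{2L}\|\grad \phi_x(w)\|^2$; putting $w = y$ and expanding the definition of $\phi_x$ produces \eqref{eq:coercive2}. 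Then \eqref{eq:coercive} (co-coercivity) follows immediately by writing \eqref{eq:coercive2} with the roles of $x$ and $y$ reversed and adding the two inequalities, so that the function values cancel and the mixed inner-product term emerges.

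Finally, for \eqref{eq:sector} I would use the shift $g(x) \defeq f(x) - \tfrac{m}{2}\|x\|^2$. The defining sector inequality of $\cvx$ gives $0 \le (\grad g(x)-\grad g(y))^\tp(x-y) \le (L-m)\|x-y\|^2$, so $g \in \cvxx$. Applying the already-proved co-coercivity \eqref{eq:coercive} to $g$ with constant $L-m$ gives
\[
(\grad g(y) - \grad g(x))^\tp(y-x) \ge \tfrac{1}{L-m}\|\grad g(y)-\grad g(x)\|^2 .
\]
Substituting $\grad g(z) = \grad f(z) - m z$, writing $s \defeq y-x$ and $v \defeq \grad f(y)-\grad f(x)$, the inequality becomes $(v-ms)^\tp s \ge \tfrac{1}{L-m}\|v-ms\|^2$; clearing the denominator and collecting terms yields $(L+m)\,s^\tp v \ge mL\|s\|^2 + \|v\|^2$, which is exactly (after multiplying by $2$) the quadratic-form inequality \eqref{eq:sector}.

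The main obstacle is purely bookkeeping: verifying that the two shifted functions $\tfrac{L}{2}\|x\|^2 - f(x)$ and $f(x) - \tfrac{m}{2}\|x\|^2$ land in the correct function classes, and making sure the algebra in the final substitution step for \eqref{eq:sector} is done cleanly so the cross-terms combine to give the stated matrix. Once those transformations are in place, every inequality follows from the convex first-order condition and the descent lemma applied to an appropriate auxiliary function.
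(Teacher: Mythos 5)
Your proposal is correct and, for the one inequality the paper actually proves in detail, \eqref{eq:sector}, it uses exactly the same argument: shift to $g(x) = f(x) - \tfrac{m}{2}\|x\|^2 \in \cvxx$, apply co-coercivity \eqref{eq:coercive} with constant $L-m$, and rearrange. For \eqref{eq:basicdef}--\eqref{eq:coercive2} you simply supply the standard derivations (monotone-gradient shift, minimize-the-upper-bound, symmetrize) that the paper defers to Nesterov's book, so the overall route matches the paper's.
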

\begin{proof}
Property~\eqref{eq:basicdef} follows from the definition of Lipschitz gradients. Properties~\eqref{eq:coercive} and~\eqref{eq:coercive2} are commonly known as \emph{co-coercivity}. To prove~\eqref{eq:sector}, define $g(x) \defeq f(x) - \tfrac{m}{2}\|x\|^2$ and note that $g \in \cvxx$. Applying~\eqref{eq:coercive} to $g$ and rearranging, we obtain
\[
(L+m)(\grad f(y)-\grad f(x))^\tp (y-x) \ge mL\|y-x\|^2 + \|\grad f(y)-\grad f(x)\|^2
\]
which is precisely~\eqref{eq:sector}. Detailed derivations of these properties can be found for example in~\cite{NesterovBook}.
\end{proof}

\begin{lem}[sector IQC]\label{lem:sector_iqc}
Suppose $f_k\in\cvx$ for each $k$, and $(y_\star,u_\star)$ is a common reference point for the gradients of $f_k$. In other words, $ u_\star = \grad f_k(y_\star)$ for all $k\ge 0$.
Let $\phi\defeq (\grad f_0,\grad f_1,\dots)$. If $u = \phi(y)$, then $\phi$ satisfies the \textbf{pointwise IQC} defined by
\begin{align*}
\Psi&= \bmat{LI_d & -I_d \\ -m I_d & I_d } &
&\text{and} &
M &= \bmat{ 0_d & I_d \\ I_d & 0_d }
\end{align*}
The corresponding quadratic inequality is that for all $y\in\ltwo^d$ and $k\ge 0$, we have
\begin{equation}\label{eq:sectiqc}
\bmat{y_k - y_\star \\ u_k-u_\star}^\tp \bmat{ -2mLI_d & (L+m)I_d \\ (L+m)I_d & -2I_d }
\bmat{y_k - y_\star \\ u_k-u_\star} \ge 0
\end{equation}
\end{lem}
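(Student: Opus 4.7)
The proof has two logical pieces: (i) unpack what the IQC defined by the given $(\Psi,M)$ means concretely, and (ii) verify that this concrete inequality is just a restatement of the ``sector'' bound~\eqref{eq:sector} from Proposition~\ref{prop:basic_properties}. Neither step is difficult; the work is mostly bookkeeping.

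First, I would observe that $\Psi$ as written has no state: it is a purely static map given by
\[
z_k = \bmat{LI_d & -I_d \\ -mI_d & I_d}\bmat{y_k \\ u_k} = \bmat{L y_k - u_k \\ -m y_k + u_k}.
\]
By definition the reference signal $z_\star$ is obtained by feeding $(y_\star,u_\star)$ through the same static map, so
\[
z_k - z_\star = \bmat{L(y_k - y_\star) - (u_k - u_\star) \\ -m(y_k - y_\star) + (u_k - u_\star)}.
\]
Using the identity $\bl[\begin{smallmatrix}a\\b\end{smallmatrix}\br]^\tp\bl[\begin{smallmatrix}0&I\\I&0\end{smallmatrix}\br]\bl[\begin{smallmatrix}a\\b\end{smallmatrix}\br] = 2a^\tp b$ with $a = L(y_k-y_\star)-(u_k-u_\star)$ and $b = -m(y_k-y_\star)+(u_k-u_\star)$, a direct expansion gives
\[
(z_k-z_\star)^\tp M (z_k-z_\star) = -2mL\|y_k-y_\star\|^2 + 2(L+m)(y_k-y_\star)^\tp(u_k-u_\star) - 2\|u_k-u_\star\|^2,
\]
which is precisely the left-hand side of the quadratic form~\eqref{eq:sectiqc}. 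This identifies the IQC inequality with~\eqref{eq:sectiqc}.

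It remains to show that~\eqref{eq:sectiqc} actually holds along any trajectory $u = \phi(y)$. Fix $k$. Since $u_k = \grad f_k(y_k)$ and, by assumption, $u_\star = \grad f_k(y_\star)$, we have $u_k - u_\star = \grad f_k(y_k) - \grad f_k(y_\star)$. Because $f_k \in \cvx$, Proposition~\ref{prop:basic_properties}\eqref{eq:sector} applied with $x = y_\star$ and $y = y_k$ yields exactly~\eqref{eq:sectiqc}. Since $k$ was arbitrary and the inequality is pointwise in $k$, this verifies the pointwise IQC and completes the proof.

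The only ``obstacle'' worth flagging is the mild notational one of reading off $(A_\Psi,B_\Psi^y,B_\Psi^u,C_\Psi,D_\Psi^y,D_\Psi^u)$ from the compact display of $\Psi$ — here the state dimension is zero, so the matrix shown is simply $[D_\Psi^y\ D_\Psi^u]$, and the fixed-point equations~\eqref{psi0} collapse to the static evaluation at $(y_\star,u_\star)$. Once this is recognized, the lemma is a one-line consequence of the sector bound.
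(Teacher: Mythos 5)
Your proposal is correct and follows essentially the same route as the paper: identify $(z_k-z_\star)^\tp M (z_k-z_\star)$ with the quadratic form in~\eqref{eq:sectiqc} via the static factorization $\Psi^\tp M \Psi$, then invoke the sector bound~\eqref{eq:sector} with $(f,x,y)\to(f_k,y_\star,y_k)$. The only difference is that you expand the product explicitly where the paper simply asserts the matrix identity can be verified.
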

\begin{proof}
Equation~\eqref{eq:sectiqc} follows immediately from~\eqref{eq:sector} by using $(f,x,y) \to (f_k,y_\star,y_k)$. It can be verified that
\[
\Psi^\tp M \Psi = \bmat{ -2mLI_d & (L+m)I_d \\ (L+m)I_d & -2I_d }
\qquad\text{and}\qquad
z_k - z_\star = \Psi \bmat{y_k-y_\star \\ u_k-u_\star}
\]
and therefore~\eqref{eq:sectiqc} is equivalent to $(z_k-z_\star)^\tp M (z_k-z_\star) \ge 0$ as required.
\end{proof}

\begin{rem}
In Lemma~\ref{lem:sector_iqc}, we use a slight abuse of notation in representing the map $\Psi: \ltwoe^d\times \ltwoe^d \to \ltwoe^m$. In writing $\Psi$ as a matrix in $\R^{2d \times 2d}$, we mean that $\Psi$ is a static map that operates pointwise on $(y,u)$. In other words,
\[
z_k = \Psi \bmat{y_k \\ u_k}
\qquad\text{for all $k$}.
\]
\end{rem}
\begin{lem}[off-by-one IQC]\label{lem:offbyone}
Suppose $f\in\cvx$ and $(y_\star,u_\star)$ is a reference for the gradient of $f$. In other words, $u_\star = \grad f(y_\star)$. Let $\phi \defeq (\grad f, \grad f,\dots)$. Then $\phi$ satisfies the \textbf{hard IQC} defined by
\begin{align*}
\Psi &= \left[\begin{array}{c|cc}
0_d & -L I_d & I_d \\ \hlinet
I_d & LI_d & -I_d \\
0_d & -m I_d & I_d
\end{array}\right]
&&\text{and}
&
M &= \bmat{ 0_d & I_d \\ I_d & 0_d }
\end{align*}
The corresponding quadratic inequality is that for all $y\in\ltwo^d$ and $k\ge 0$, we have
\begin{equation}\label{eq:popov1}
(\tilde u_0-m\tilde y_0)^\tp (L\tilde y_0-\tilde u_0)
+ \sum_{t=1}^k
(\tilde u_t-m\tilde y_t)^\tp \bigl( L(\tilde y_t-\tilde y_{t-1}) - (\tilde u_t-\tilde u_{t-1}) \bigr) \ge 0
\end{equation}
where we have defined $\tilde y_k \defeq y_k-y_\star$ and $\tilde u_k \defeq u_k-u_\star$.
\end{lem}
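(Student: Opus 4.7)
My plan has two stages: first I unroll the state-space recursion defining $\Psi$ to recognize the hard IQC as the explicit summation inequality~\eqref{eq:popov1}; second I prove that summation inequality by applying the co-coercive inequality~\eqref{eq:coercive2} to the auxiliary function $h\defeq f-\tfrac{m}{2}\|\cdot\|^2\in\cvxx$.

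Stage one is bookkeeping. Because $A_\Psi=0_d$ and $\zeta_0=\zeta_\star$, the recursion gives $\zeta_{k+1}-\zeta_\star = \tilde u_k - L\tilde y_k$. Using the shorthand $v_t\defeq\tilde u_t-m\tilde y_t$ and $w_t\defeq L\tilde y_t-\tilde u_t$ with the convention $w_{-1}\defeq 0$, I obtain $\zeta_k-\zeta_\star=-w_{k-1}$, and reading off $z_k-z_\star$ through $C_\Psi$, $D_\Psi^y$, $D_\Psi^u$ gives $z_k-z_\star=(w_k-w_{k-1},\,v_k)$. Thus $(z_k-z_\star)^\tp M(z_k-z_\star) = 2\,v_k^\tp(w_k-w_{k-1})$, and the hard IQC amounts exactly to \eqref{eq:popov1}.

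For stage two, observe that $v_t=\nabla h(y_t)-\nabla h(y_\star)$, $w_t=(L-m)\tilde y_t-v_t$, and therefore $w_t-w_{t-1}=(L-m)(y_t-y_{t-1})-(v_t-v_{t-1})$ uniformly when we set $y_{-1}\defeq y_\star$. Substituting and evaluating the elementary telescope $\sum_t v_t^\tp(v_t-v_{t-1})=\tfrac{1}{2}\|v_k\|^2+\tfrac{1}{2}\sum_t\|v_t-v_{t-1}\|^2$ gives
\[
\sum_{t=0}^k v_t^\tp(w_t-w_{t-1}) = (L-m)\sum_{t=0}^k v_t^\tp(y_t-y_{t-1}) - \tfrac{1}{2}\|v_k\|^2 - \tfrac{1}{2}\sum_{t=0}^k\|v_t-v_{t-1}\|^2.
\]
I bound the first right-hand sum by applying~\eqref{eq:coercive2} to the convex, $(L-m)$-smooth function $h$ at the pair $(y_{t-1},y_t)$ for each $t\ge 0$ (with $y_{-1}=y_\star$). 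Introducing the Bregman-type quantity $\Phi(y)\defeq h(y)-h(y_\star)-\nabla h(y_\star)^\tp(y-y_\star)\ge 0$, the inequality rearranges to
\[
v_t^\tp(y_t-y_{t-1}) \;\ge\; \Phi(y_t)-\Phi(y_{t-1}) + \tfrac{1}{2(L-m)}\|v_t-v_{t-1}\|^2,
\]
and summing telescopes the $\Phi$ terms (using $\Phi(y_\star)=0$); multiplying by $L-m$ yields $(L-m)\sum v_t^\tp(y_t-y_{t-1})\ge (L-m)\Phi(y_k)+\tfrac{1}{2}\sum\|v_t-v_{t-1}\|^2$.

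The crucial observation is that the two $\tfrac{1}{2}\sum\|v_t-v_{t-1}\|^2$ contributions---one from the elementary identity, one from the co-coercive upgrade---cancel exactly when the two displays are combined, leaving $\sum v_t^\tp(w_t-w_{t-1}) \ge (L-m)\Phi(y_k) - \tfrac{1}{2}\|v_k\|^2$. A final application of~\eqref{eq:coercive2} at the pair $(y_\star,y_k)$ gives $\Phi(y_k)\ge\tfrac{1}{2(L-m)}\|v_k\|^2$, which makes the right-hand side nonnegative. The main obstacle is spotting that one must use the full co-coercive form of convexity rather than plain convexity: plain convexity produces only $v_t^\tp(y_t-y_{t-1})\ge\Phi(y_t)-\Phi(y_{t-1})$, and then there is no $\|v_t-v_{t-1}\|^2$ term to absorb the negative contribution from the elementary telescope. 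Because individual summands $v_t^\tp(w_t-w_{t-1})$ can in fact be negative, the argument is inherently global and cannot be done inductively in $k$ or term-by-term.
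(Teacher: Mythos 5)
Your proof is correct and takes essentially the same route as the paper's: both arguments hinge on applying the co-coercivity inequality~\eqref{eq:coercive2} to the convexified function $f-\tfrac{m}{2}\|\cdot\|^2$ at the consecutive pairs $(y_{t-1},y_t)$ and once more at $(y_\star,y_k)$, and your global cancellation, once unwound, reproduces exactly the paper's per-term bound $v_t^\tp(w_t-w_{t-1})\ge q_t-q_{t-1}$ with $q_t\defeq (L-m)\Phi(y_t)-\tfrac12\|v_t\|^2$, followed by the same telescoping. Your closing remark that the argument ``cannot be done term-by-term'' is therefore slightly off---the paper does precisely that, bounding each summand by a difference of the nonnegative quantities $q_t$ rather than by zero---though your use of the Bregman-type $\Phi$ is a bit more careful than the paper's $g$ when $u_\star\ne 0$.
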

\begin{proof}
Define the function
\[
g(x) \defeq f(x)-f(y_\star) -  \frac{m}{2}\|x-y_\star\|^2
\] 
It is straightforward to check that $g \in \cvxx$, and $g(x) \ge g(y_\star) =0$ for all $x\in\R^d$. Applying~\eqref{eq:coercive2} using $(f,x,y)\to (g,y_\star,y_k)$, we observe that
\begin{equation}\label{turtle}
q_k \defeq (L-m)g(y_k) - \frac12 \|\grad g(y_k) \|^2 \ge 0
\qquad\text{for all $k\ge 0$}
\end{equation}
Moreover, $\grad g(y_k) = \grad f(y_k) -m(y_k-y_\star) = \tilde u_k-m \tilde y_k$. Therefore, we may manipulate the first term in~\eqref{eq:popov1} to eliminate $\tilde u_0$ and obtain
\begin{align}\label{eq:q0}
(\tilde u_0-m\tilde y_0)^\tp (L\tilde y_0-\tilde u_0)
&=\grad g(y_0)^\tp((L-m) \tilde y_0 - \grad g(y_0)) \notag \\
&= (L-m) \grad g(y_0)^\tp \tilde y_0 - \|\grad g(y_0)\|^2 \notag\\
&\ge (L-m) g(y_0) - \tfrac{1}{2}\|\grad g(y_0) \|^2 \notag\\
&= q_0
\end{align}
where the inequality follows from applying~\eqref{eq:coercive2} using $(f,x,y)\to (g,y_0,y_\star)$.
Similarly, the $t^\text{th}$ term in the sum in~\eqref{eq:popov1} can be bounded by eliminating $\tilde u_t$ and $\tilde u_{t-1}$.
\begin{align}\label{eq:qt}
&\hspace{-2cm}(\tilde u_t -m\tilde y_t)^\tp ( L(\tilde y_t-\tilde y_{t-1}) - (\tilde u_t-\tilde u_{t-1}) ) \notag\\
&= (L-m)\grad g(y_t)^\tp (\tilde y_t-\tilde y_{t-1}) - \grad g(y_t)^\tp(\grad g(y_t)-\grad g(y_{t-1}) ) \notag\\
&\ge (L-m)(g(y_t)-g(y_{t-1})) - \tfrac{1}{2}\|\grad g(y_t)\|^2 +  \tfrac{1}{2}\|\grad g(y_{t-1})\|^2 \notag \\
&= q_t-q_{t-1}
\end{align}
where the inequality follows this time from applying~\eqref{eq:coercive2} using $(f,x,y)\to (g,y_t,y_{t-1})$.
Substituting~\eqref{eq:q0} and~\eqref{eq:qt} into the left-hand side of~\eqref{eq:popov1}, the sum telescopes and we obtain the lower bound $q_k$, which is nonnegative from~\eqref{turtle}.

To verify the IQC factorization, note that the state equations for $\Psi$  given in the statement of Lemma~\ref{lem:offbyone} are
\[
\left\{
\begin{aligned}
\zeta_0 &= \zeta_\star \\
\zeta_{k+1} &= -Ly_k +  u_k \\
z_k &= \bmat{ \zeta_k + L y_k -  u_k \\ -m  y_k +  u_k}
\end{aligned}\right\}
\implies
\left\{
\begin{aligned}
z_0 &= \bmat{\zeta_\star + L y_0 -  u_0 \\ -m y_0 + u_0} \\
z_k &= \bmat{L( y_k- y_{k-1}) -( u_k -  u_{k-1}) \\ -m y_k +  u_k },\quad k\ge 1
\end{aligned}\right\}
\]
Moreover, the solution to the fixed-point equations~\eqref{psi0} are
\begin{align*}
\zeta_\star &= -Ly_\star + u_\star
&&\text{and}&
z_\star &= \bmat{ 0 \\ -my_\star + u_\star}
\end{align*}
Therefore, we conclude that
\[
z_0-z_\star = \bmat{ L\tilde y_0 - \tilde u_0 \\ -m\tilde y_0 + \tilde u_0 }
\quad\text{and}\quad
z_k-z_\star = \bmat{L( \tilde y_k- \tilde y_{k-1}) -( \tilde u_k -  \tilde u_{k-1}) \\ -m \tilde y_k +  \tilde u_k },\, k\ge 1
\]
and it follows that $\sum_{t=0}^{k}(z_t-z_\star)^\tp M (z_t-z_\star) \ge 0$ is equivalent to~\eqref{eq:popov1}, as required.
\end{proof}

Note that the sector IQC~\eqref{eq:sectiqc} is a special case of the off-by-one IQC when $k=0$. The off-by-one IQC is itself a special case of the Zames-Falb IQC, which we now describe.

\begin{lem}[Zames-Falb IQC]\label{lem:zames-falb}
Suppose $f\in\cvx$ has the optimal point $u_\star = \grad f(y_\star) = 0$. Let $\phi \defeq (\grad f, \grad f,\dots)$ and let $h_1,h_2,\dots$ be any sequence of real numbers that satisfies
\begin{enumerate}[(i)]
\item $\{h_\tau\}_{\tau\ge 1}$ is finitely nonzero, and $h_s$ is the last nonzero component.
\item $0 \le h_\tau \le 1$ for all $\tau \ge 1$.
\item $\sum_{\tau=1}^\infty h_\tau \le 1$.
\end{enumerate}
Then $\phi$ satisfies the \textbf{hard IQC} defined by
\begin{align*}
\Psi &= \left[\begin{array}{cccc|cc}
0_d & 0_d & \dots & 0_d   &   -LI_d & I_d \\
I_d & 0_d & \dots & 0_d   &   0_d & 0_d  \\
\vdots & \ddots & \ddots & \vdots   &    \vdots & \vdots \\
0_d & \dots & I_d & 0_d   &   0_d & 0_d \\ \hlinet
h_1I_d & h_2I_d & \dots & h_sI_d &   LI_d & -I_d \\
0_d & 0_d & \dots & 0_d  &  -mI_d & I_d
\end{array}\right]
&&\text{and}&
M &= \bmat{ 0_d & I_d \\ I_d & 0_d }
\end{align*}
The corresponding quadratic inequality is that for all $y\in\ltwo^d$ and $k\ge 0$, we have
\begin{equation}\label{eq:zames-falb1}
 \sum_{t=0}^k
(\tilde u_t - m\tilde y_t)^\tp \left( L \bbbl(\tilde y_t-\sum_{\tau=1}^t h_\tau \tilde y_{t-\tau}\bbbr) - \bbbl(\tilde u_t- \sum_{\tau=1}^t h_\tau \tilde u_{t-\tau}\bbbr) \right) \ge 0 
\end{equation}
where we have defined $\tilde y_k \defeq y_k-y_\star$ and $\tilde u_k \defeq u_k-u_\star$.
\end{lem}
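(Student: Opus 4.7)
The plan is to parallel the off-by-one IQC proof (Lemma~\ref{lem:offbyone}), replacing the single one-step correction by the convolution defined by $\{h_\tau\}$. I would start by defining the shifted function $g(x) \defeq f(x) - f(y_\star) - \tfrac{m}{2}\|x-y_\star\|^2$, which lies in $\cvxx$ and, using $u_\star = \grad f(y_\star) = 0$, attains its minimum at $y_\star$ with $g(y_\star) = 0$. A short computation gives $\grad g(y_k) = \tilde u_k - m\tilde y_k$. Setting $v_t \defeq \grad g(y_t)$ and using $L\tilde y_t - \tilde u_t = (L-m)\tilde y_t - v_t$, the summand in~\eqref{eq:zames-falb1} becomes
\[
v_t^\tp \left[ (L-m)\bl(\tilde y_t - \textstyle\sum_{\tau=1}^t h_\tau \tilde y_{t-\tau}\br) - \bl(v_t - \textstyle\sum_{\tau=1}^t h_\tau v_{t-\tau}\br) \right],
\]
and the quantity $q_t \defeq (L-m)g(y_t) - \tfrac{1}{2}\|v_t\|^2$ is nonnegative by~\eqref{eq:coercive2} applied with $(f,x,y)\to(g,y_\star,y_t)$.

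The main step is to show that the displayed summand is lower-bounded by $q_t - \sum_{\tau=1}^t h_\tau q_{t-\tau}$. For this, I would split
\[
\tilde y_t - \sum_{\tau=1}^t h_\tau \tilde y_{t-\tau} = (1-H_t)\tilde y_t + \sum_{\tau=1}^t h_\tau (\tilde y_t - \tilde y_{t-\tau}),
\]
where $H_t \defeq \sum_{\tau=1}^t h_\tau$, and apply~\eqref{eq:coercive2} with $(g,y_t,y_\star)$ to the first piece and with $(g,y_t,y_{t-\tau})$ to each piece in the sum, so as to bound $(L-m)v_t^\tp \tilde y_t$ and $(L-m)v_t^\tp(\tilde y_t - \tilde y_{t-\tau})$ from below. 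Expanding $\|v_t - v_{t-\tau}\|^2$ and collecting terms, the coefficient of $\|v_t\|^2$ collapses to $-\tfrac{1}{2}$ (independent of $H_t$), the $v_t^\tp v_{t-\tau}$ cross-terms cancel, and what remains is precisely $q_t - \sum h_\tau q_{t-\tau}$.

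Summing this bound over $t=0,\dots,k$ and interchanging the order of summation produces
\[
q_k + \sum_{s=0}^{k-1}\bl(1 - \sum_{\tau=1}^{k-s} h_\tau\br) q_s,
\]
which is nonnegative because each $q_s\ge 0$ and, by hypotheses (ii)--(iii), each coefficient lies in $[0,1]$. To finish, I would verify the factorization by unrolling the shift-register state to get $\zeta^{(j)}_k = -Ly_{k-j} + u_{k-j}$ for $k\ge j$; subtracting the fixed point yields $z^{(1)}_k - z^{(1)}_\star = L(\tilde y_k - \sum h_\tau \tilde y_{k-\tau}) - (\tilde u_k - \sum h_\tau \tilde u_{k-\tau})$ and $z^{(2)}_k - z^{(2)}_\star = \tilde u_k - m\tilde y_k$, and the $M$-quadratic form $(z_t-z_\star)^\tp M(z_t-z_\star)$ then reproduces the summand in~\eqref{eq:zames-falb1} up to a harmless factor of $2$. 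The main obstacle is the bookkeeping in the middle step: tracking the $\|v_t\|^2$, $\|v_{t-\tau}\|^2$, and $v_t^\tp v_{t-\tau}$ coefficients carefully enough to recognize the telescoping structure $q_t - \sum h_\tau q_{t-\tau}$. Once that identity is established, positivity of $q_t$ together with $\sum h_\tau \le 1$ closes the argument immediately.
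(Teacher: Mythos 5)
Your proof is correct and follows essentially the same route as the paper's: both rest on the function $g$, the quantities $q_t$, and the co-coercivity bounds from the off-by-one proof (the analogues of~\eqref{eq:q0} and~\eqref{eq:qt}), combined with convex weights $h_\tau$. The only difference is organizational — the paper first establishes each ``off-by-$j$'' inequality by telescoping and then takes the convex combination of the summed inequalities, whereas you take the convex combination summand-by-summand (yielding the bound $q_t - \sum_{\tau} h_\tau q_{t-\tau}$) and then sum and reindex — and both orderings lead to the same nonnegativity certificate.
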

\begin{proof} We will construct a proof for a general sequence $h_1,h_2,\dots$ by first considering a specific set of sequences. Fix some $j\ge 1$ and consider the case where
\[
h_\tau = \begin{cases} 1 & \tau = j \\ 0 & \tau \ne j \end{cases}
\]
For $t< j$, the terms in the sum~\eqref{eq:zames-falb1} have the form
\[
(\tilde u_t-m\tilde y_t)^\tp ( L \tilde y_t - \tilde u_t)
\]
which are bounded below by $q_t \ge 0$, as proven in Lemma~\ref{lem:offbyone}, \eqref{turtle}--\eqref{eq:q0}. For $t\geq j$, the terms in the sum~\eqref{eq:zames-falb1} have the form
\begin{align*}
(\tilde u_t - m\tilde y_t)^\tp ( L (\tilde y_t- \tilde y_{t-j}) - (\tilde u_t-  \tilde u_{t-j}) )
\end{align*}
which are bounded below by $q_t - q_{t-j}$, as proven in~\eqref{eq:qt}. Summing up~\eqref{eq:zames-falb1} for all $t$ yields a telescoping sum, thereby proving that~\eqref{eq:zames-falb1} holds. This can be thought of an ``off-by-$j$'' IQC. Indeed, when $j=1$, we recover the off-by-one IQC of Lemma~\ref{lem:offbyone}.

Now note that if we take a convex combination of the inequalities~\eqref{eq:zames-falb1} corresponding to each off-by-$j$ IQC and let the associated coefficient be $h_j$, we have proven~\eqref{eq:zames-falb1} for the case of a general sequence $h_1,h_2,\dots$.
\end{proof}

Though we will not make use of the more general Zames-Falb family of inequalities, we include them as they are interesting in their own right and may find applications in future work.
%In particular, the Zames-Falb family completely describes strongly convex functions in the following sense. If $\phi$ satisfies every Zames-Falb IQC for a particular $m$ and $L$, then $\phi = (\grad f,\grad f,\dots)$ for some $f \in \cvx$~\cite{Carrasco}.
%
We conclude this section with a $\rho$-hard version of the off-by-one IQC. This final IQC will be critical for deriving convergence rates.

\begin{lem}[weighted off-by-one IQC]\label{lem:combo}
Suppose $f\in\cvx$ and $(y_\star,u_\star)$ is a reference for the gradient of $f$. In other words, $u_\star = \grad f(y_\star)$. Let $\phi \defeq (\grad f, \grad f,\dots)$. Then for any $(\bar \rho,\rho)$ satisfying $0 \le \bar\rho \le \rho \le 1$, $\phi$ satisfies the \textbf{$\rho$-hard IQC} defined by
\begin{align*}
\Psi &= \left[\begin{array}{c|cc}
0_d & -L I_d & I_d \\ \hlinet
\bar\rho^2 I_d & LI_d & -I_d \\
0_d & -m I_d & I_d
\end{array}\right]
&&\text{and}
&
M &= \bmat{ 0_d & I_d \\ I_d & 0_d }
\end{align*}
The corresponding quadratic inequality is that for all $y\in\ltwo^d$ and $k\ge 0$, we have
\begin{equation}\label{eq:popov2}
(\tilde u_0 - m \tilde y_0)^\tp (L\tilde y_0-\tilde u_0)
+ \sum_{t=1}^k
\rho^{-2t}
(\tilde u_t-m\tilde y_t)^\tp \bigl( L( \tilde y_t-\bar\rho^2 \tilde y_{t-1}) - ( \tilde u_t - \bar\rho^2 \tilde u_{t-1}) \bigr) \ge 0
\end{equation}
where we have defined $\tilde y_k \defeq y_k-y_\star$ and $\tilde u_k \defeq u_k-u_\star$.
\end{lem}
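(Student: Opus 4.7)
My plan is to follow the pattern of Lemma~\ref{lem:offbyone} closely, the new ingredients being the weighting $\rho^{-2t}$ and the coefficient $\bar\rho^2$ on the delayed terms. First I would establish a pointwise lower bound on each summand of~\eqref{eq:popov2} in terms of the nonnegative quantities $q_t$ defined in~\eqref{turtle}; then I would assemble these bounds, re-index the sum, and invoke the hypothesis $\bar\rho \le \rho$ to conclude.

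As in the proof of Lemma~\ref{lem:offbyone}, I would set $g(x) \defeq f(x) - f(y_\star) - \tfrac{m}{2}\|x-y_\star\|^2 \in \cvxx$, observe $\grad g(y_k) = \tilde u_k - m\tilde y_k$, and define $q_k \defeq (L-m)g(y_k) - \tfrac{1}{2}\|\grad g(y_k)\|^2$, which is nonnegative by~\eqref{eq:coercive2}. The $t=0$ term of~\eqref{eq:popov2} does not involve $\bar\rho$ and inherits $(\tilde u_0 - m\tilde y_0)^\tp(L\tilde y_0-\tilde u_0) \ge q_0$ verbatim from~\eqref{eq:q0}. For each $t \ge 1$, the key algebraic step is to write the bracketed quantity as a convex combination
\begin{align*}
&(L-m)\grad g(y_t)^\tp (\tilde y_t - \bar\rho^2 \tilde y_{t-1}) - \grad g(y_t)^\tp(\grad g(y_t) - \bar\rho^2 \grad g(y_{t-1}))\\
&\quad = (1-\bar\rho^2)\bigl[(L-m)\grad g(y_t)^\tp\tilde y_t - \|\grad g(y_t)\|^2\bigr]\\
&\qquad + \bar\rho^2\bigl[(L-m)\grad g(y_t)^\tp(\tilde y_t - \tilde y_{t-1}) - \grad g(y_t)^\tp(\grad g(y_t) - \grad g(y_{t-1}))\bigr].
\end{align*}
The first bracket is $\ge q_t$ by applying~\eqref{eq:coercive2} to $g$ at $(x,y) = (y_t, y_\star)$, while the second is $\ge q_t - q_{t-1}$ by the computation already carried out in~\eqref{eq:qt}. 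Combining these gives the pointwise lower bound $q_t - \bar\rho^2 q_{t-1}$ on the $t$-th summand.

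Summing these bounds with the weights $\rho^{-2t}$ and shifting the index in the $q_{t-1}$ subsum, the left-hand side of~\eqref{eq:popov2} is at least
\[
q_0 + \sum_{t=1}^{k}\rho^{-2t}(q_t - \bar\rho^2 q_{t-1}) = \rho^{-2k} q_k + \bigl(1 - \bar\rho^2/\rho^2\bigr)\sum_{t=0}^{k-1}\rho^{-2t} q_t,
\]
which is nonnegative because $q_t \ge 0$ for every $t$ and the hypothesis $\bar\rho \le \rho$ gives $1 - \bar\rho^2/\rho^2 \ge 0$. This establishes~\eqref{eq:popov2}.

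What remains is to verify that the state-space data in the statement produces exactly~\eqref{eq:popov2} under the IQC definition. With $A_\Psi = 0_d$, the recursion yields $\zeta_{k+1} = -Ly_k + u_k$ and fixed point $\zeta_\star = -Ly_\star + u_\star$; a routine computation then shows $z_0 - z_\star = \bmat{L\tilde y_0 - \tilde u_0 \\ \tilde u_0 - m\tilde y_0}$ and, for $k \ge 1$, $z_k - z_\star = \bmat{L(\tilde y_k - \bar\rho^2 \tilde y_{k-1}) - (\tilde u_k - \bar\rho^2 \tilde u_{k-1}) \\ \tilde u_k - m\tilde y_k}$. Since $M$ pairs the two output blocks, $\sum_{t=0}^k \rho^{-2t}(z_t - z_\star)^\tp M (z_t - z_\star)$ equals twice the left-hand side of~\eqref{eq:popov2}, completing the reduction. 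The main obstacle I anticipate is getting the convex-combination decomposition for $t \ge 1$ correct and carefully bookkeeping the weighted telescoping-like sum; everything else is a direct lift from Lemma~\ref{lem:offbyone}.
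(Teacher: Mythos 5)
Your proof is correct and follows essentially the same route as the paper's: the same convex-combination decomposition of each summand into sector and off-by-one pieces, yielding the same pointwise bound $q_t - \bar\rho^2 q_{t-1}$. The only (harmless) difference is at the end: the paper proves $\bar\rho$-hardness, where the sum telescopes exactly, and then invokes the fact that $\bar\rho$-hardness implies $\rho$-hardness for $\rho \ge \bar\rho$, whereas you sum directly with the $\rho^{-2t}$ weights and verify nonnegativity via the reindexed sum and the factor $1-\bar\rho^2/\rho^2 \ge 0$.
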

\begin{proof}
Note that the weighted off-by-one IQC is a Zames-Falb IQC with $h = (\bar\rho^2,0,\dots)$. Thus the hardness and the factorization $(\Psi,M)$ follows from Lemma~\ref{lem:zames-falb}. In order to prove $\rho$-hardness~\eqref{eq:popov2}, a bit more work is required. First, observe (see remarks on pointwise and hard IQCs after Theorem~\ref{thm:main}) that it suffices to show $\bar\rho$-hardness, and this will imply $\rho$-hardness. The $t^\text{th}$ term in the sum in~\eqref{eq:popov2} can be bounded as follows. First, define the general terms in the sector (Lemma~\ref{lem:sector_iqc}) and off-by-one (Lemma~\ref{lem:offbyone}) inequalities:
\begin{align*}
s_t &\defeq (\tilde u_t-m\tilde y_t)^\tp ( L\tilde y_t - \tilde u_t)\\
p_t &\defeq (\tilde u_t-m\tilde y_t)^\tp \bigl( L( \tilde y_t-\tilde y_{t-1}) - ( \tilde u_t - \tilde u_{t-1}) \bigr)
\end{align*}
Algebraic manipulations reveal that the general term in the sum~\eqref{eq:popov2} satisfies
\begin{align*}
(\tilde u_t-m\tilde y_t)^\tp \bigl( L( \tilde y_t-\bar\rho^2 \tilde y_{t-1}) - ( \tilde u_t - \bar\rho^2 \tilde u_{t-1}) \bigr)
&= (1-\bar\rho^2)s_t + \bar\rho^2 p_t \\
&\ge (1-\bar\rho^2)q_t + \bar\rho^2 (q_t - q_{t-1}) \\
&= q_t - \bar\rho^2 q_{t-1}
\end{align*}
where the inequalities follow from~\eqref{eq:q0} and~\eqref{eq:qt}. Substituting the general term back into~\eqref{eq:popov2} with $\rho = \bar\rho$, the $\bar\rho^{-2t}$ coefficient causes the sum to telescope and we are left with $\bar\rho^{-2k}q_k$, which is nonnegative from~\eqref{turtle}. This completes the proof.
\end{proof}

\begin{rem}\label{rem:wobo}
In implementing the weighted off-by-one IQC, one can simply set $\bar \rho = \rho$. However, a less conservative approach is to keep $\bar \rho$ as an additional degree of freedom. In Theorem~\ref{thm:main}, the IQC constraint is included in~\eqref{eq:expLMI} in the final term and is multiplied by the constant $\lambda \ge 0$. When using the weighted off-by-one IQC, this amounts to:
\begin{align*}
\lambda \left( (1-\bar \rho^2) s_t + \bar \rho^2 p_t \right)
\qquad \text{with the constraints: } 0 \le \bar \rho \le \rho
\text{ and }\lambda \ge 0
\end{align*}
By defining $\lambda_1 = \lambda(1-\bar\rho^2)$ and $\lambda_2 = \lambda\bar\rho^2$, an equivalent expression is
\[
\lambda_1 s_t + \lambda_2 p_t
\qquad\text{with the constraints: } \lambda_1,\lambda_2 \ge 0
\text{ and } \lambda_2 \le \rho^2(\lambda_1+\lambda_2)
\]
\end{rem}

%%%%%%%%%%%%%%%%%%%%%%%%%%%%%%%%%%%%%%%%%%%%%%%%%%%%%%%%%%%%%%%%%%%%%%%%%%%%%%%

\subsection{Historical context of IQCs and Lyapunov theory}\label{sec:history_lyap}
Constructing Lyapunov functions has a long history in control and dynamical systems, and the central focus of this paper is borrowing tools from this literature to see how we can generalize our analysis from quadratic functions to more general, nonlinear convex functions.

One of the most fundamental problems in control theory is certifying the stability of nonlinear systems.
In interconnected systems such as electric circuits or chemical plants, individual components are typically modeled using differential (or difference) equations.  Interconnected systems often contain nonlinearities or components that are otherwise difficult to model. The earliest results on such systems date back to the work of Lur'e and Postnikov~\cite{lure_postnikov}. The goal was to prove stability under a wide range of admissible uncertainties. This notion of robust stability was called \emph{absolute stability}.  Indeed, Lur'e studied precisely the model we are concerned with: a known linear system interconnected in feedback to an uncertain nonlinear system.

In the 1960's and 70's, several sufficient conditions for absolute stability were expressed as frequency-domain conditions. In other words, the main objects of interest are ratios of the Laplace transforms of the outputs to the inputs, also known as \emph{transfer functions}. Examples include the Popov criterion~\cite{popov}, the small-gain theorem, the circle criterion, and passivity theory~\cite{zames}. Frequency-domain conditions were popular at the time because they could be verified graphically. The work of Willems~\cite{willems} unified many of the existing results by casting them in the time domain in a framework called \emph{dissipativity theory}. This notion is on one hand a generalization of Lyapunov functions to include systems with exogenous inputs, and on the other hand a generalization of passivity theory and the small-gain theorem. These ideas form the core of modern nonlinear control theory, and are covered in many textbooks such as Khalil~\cite{khalil}.

With the advent of computers, graphical methods were no longer required. The connection between frequency-domain conditions and Linear Matrix Inequalities (LMIs) was made by Kalman~\cite{kalman} and Yakubovich~\cite{yakubovich} and culminated in the Kalman-Yakubovich-Popov (KYP) lemma, also known as the Positive-Real lemma. This paved the way for the use of modern computational tools such as semidefinite programming. Another important development is the concept of the \emph{structured singular value}~\cite{doyle_ssv}, also known as $\mu$-analysis. While previous theory had been used to describe \emph{static} nonlinearities or uncertainties, $\mu$-analysis is a computationally tractable framework for describing a system containing multiple \emph{dynamic} uncertainties. A survey of $\mu$-related techniques and results is given in~\cite{packard_doyle_mu}. For a comprehensive overview of the history and development of LMIs in control theory, we refer the reader to~\cite{boydLMI}.

Integral Quadratic Constraints (IQCs) were first introduced by Yakubovich, who considered the notion of imposing quadratic constraints on an infinite-horizon control problem~\cite{yakubovich_iqc}, and combining multiple constraints via the S-procedure~\cite{yakubovich_sprocedure}. The definitive work on IQCs is Megretski and Rantzer~\cite{megrantzer}. In this seminal paper, the authors showed that dissipativity theory, as well as all the frequency-domain conditions, could be formulated as IQCs. Furthermore, the KYP lemma in conjunction with the S-procedure allows stability to be verified by solving an LMI.

The seminal paper on IQCs~\cite{megrantzer} develops the theory primarily in the frequency domain, but also alludes to time-domain versions of the results by introducing \emph{hard} IQCs. This notion of hard IQCs is pursued in~\cite{seiler13TAC}, where the main IQC stability theorem is rederived entirely in the time domain. In the time domain, these constraints parallel the development of Nesterov, where we are able to construct inequalities linking multiple inputs and outputs of uncertain functions.  This allows us to provide a wholly self-contained development of the theory.  Moreover, we are able to enhance the techniques of~\cite{seiler13TAC}, providing new IQCs and considerably sharper rates of convergence than those discussed in the earlier work. In this sense, our work provides useful methods for control theorists interested in estimating rates of stabilization of their control systems.

%%%%%%%%%%%%%%%%%%%%%%%%%%%%%%%%%%%%%%%%%%%%%%%%%%%%%%%%%%%%%%%%%%%%%%%%%%%%%%%
\section{Case studies}\label{sec:computation}

We now use the results of Section~\ref{sec:iqc} to rederive some existing results from the literature on iterative large-scale algorithms. The IQC approach gives a unified method to analyze many different algorithms. In addition to verifying existing results, we also present a negative result that was not previously known.

\subsection{Computational approach}\label{subsec:comp_approach}
Given an iterative algorithm, our first step is to express it as a feedback interconnection of a discrete linear time-invariant dynamical system with a nonlinearity representing $\grad f$. This procedure is explained in Section~\ref{sec:opt-dyn} and yields matrices $(A,B,C)$.

The next step is to decide which IQCs will be used to characterize the nonlinearity. A simple but conservative choice is the sector IQC defined in Lemma~\ref{lem:sector_iqc}. A less conservative choice is the weighted off-by-one IQC of Lemma~\ref{lem:combo}. For the chosen $(\Psi,M)$, we find the smallest $\rho$ such that the semidefinite program (SDP)~\eqref{eq:expLMI} of Theorem~\ref{thm:main} is feasible. In the case of the sector IQC, the SDP has variables $(P,\lambda,\rho)$. For the weighted off-by-one IQC, the SDP has variables $(P,\lambda_1,\lambda_2,\rho)$ as explained in Remark~\ref{rem:wobo}.
The resulting $\rho$ is an upper bound for the worst-case convergence rate of the algorithm. Specifically,
\[
\|\xi_k-\xi_\star\| \le \sqrt{\cond(P)}\, \rho^k \|\xi_0-\xi_\star\|.
\]

To solve the SDP~\eqref{eq:expLMI} numerically, observe that it is a quasiconvex program. In particular, for every fixed $\rho$, \eqref{eq:expLMI} is an LMI. The simplest way to solve~\eqref{eq:expLMI} is to use a bisection search on $\rho$. For a fixed $\rho$, the SDP~\eqref{eq:expLMI} or \eqref{eq:mainLMI2} become an LMI and can be efficiently solved using interior-point methods. Popular implementations include SDPT3, SeDuMi, and Mosek. This approach was used for all the simulations presented herein.

More sophisticated methods exist to solve~\eqref{eq:expLMI} as well. A quasiconvex program of the type~\eqref{eq:expLMI} is known as a  \emph{generalized eigenvalue optimization problem} (GEVP) \cite{boydLMI}. The GEVP is well-studied and modified interior-point methods such as the \emph{method of centers}~\cite{boyd_centers} and the \emph{long-step method of analytic centers}~\cite{nemirovski1997long} can be used to solve it.

\subsection{Lossless dimensionality reduction}\label{subsec:dim_reduction}

The size of the SDP in~\eqref{eq:expLMI} is proportional to $d$, the size of the state $\xi_k$ in the optimization algorithm. This can be problematic in cases where $d$ is large because it can be computationally costly to solve large SDPs. In many cases of interest, however, the algorithms we wish to analyze have a block-diagonal structure. For example, Nesterov's accelerated method has the form~\eqref{bb}, which is
\begin{equation}\label{bb2}
\left[\begin{array}{c|c}
    A & B \\ \hlinet
    C & D
\end{array}\right] =\left[\begin{array}{cc|c}
    (1+\beta)I_d & -\beta I_d  & -\alpha I_d\\
    I_d & 0_d & 0_d\\\hlinet
    (1+\beta)I_d & -\beta I_d & 0_d
\end{array}\right]
\end{equation}
Each of the matrices $(A,B,C)$ is a block matrix with repeated diagonal blocks. Using Kronecker product notation (see Section~\ref{sec:notation}, this means for example that
\[
A = \bmat{ 1+\beta & -\beta \\ 1 & 0 } \otimes I_d
\]
and similarly for $B$ and $C$. Moreover, the IQCs we use to describe $\grad f$ have the same sort of structure. That is, $(A_\Psi,B_\Psi^y,B_\Psi^u,C_\Psi,D_\Psi^y,D_\Psi^u)$ are block matrices with repeated diagonal blocks. Now consider the SDP~\eqref{eq:expLMI} from Theorem~\ref{thm:main}.
\begin{equation}\label{eq:expLMI2}
\bmat{ \hat A^\tp P \hat A - \rho^2 P & \hat A^\tp P \hat B \\
\hat B^\tp P \hat A & \hat B^\tp P \hat B} + \lambda
 \bmat{\hat C & \hat D}^\tp M \bmat{\hat C & \hat D}
\preceq 0
\end{equation}
Based on the discussion above, each of the matrices $(\hat A,\hat B,\hat C,\hat D,M)$ have the form e.g. $A_0 \otimes I_d$. Rather than looking for a general $P \in \R^{nd\times nd}$ with $P \succ 0_{nd}$, if we restrict our search to $P = P_0 \otimes I_d$ with $P_0\in\R^{n\times n}$ and $P_0\succ 0_n$, then the SDP reduces to
\begin{equation}\label{eq:expLMI3}
\bmat{ \hat A_0^\tp P_0 \hat A_0 - \rho^2 P_0 & \hat A_0^\tp P_0 \hat B_0 \\
\hat B_0^\tp P_0 \hat A_0 & \hat B_0^\tp P_0 \hat B_0} + \lambda
 \bmat{\hat C_0 & \hat D_0}^\tp M_0 \bmat{\hat C_0 & \hat D_0}
\preceq 0
\end{equation}
The resulting SDP no longer depends on $d$ and is effectively the same as if we had solved the original problem with $d=1$. As it turns out, there is no loss of generality in assuming a $P$ of this form. To see why this is so, first suppose $P_0\succ 0$ satisfies~\eqref{eq:expLMI3}. Then clearly $P=P_0\otimes I_d$ satisfies~\eqref{eq:expLMI2}. Conversely, suppose $P\succ 0$ satisfies~\eqref{eq:expLMI2}. Then define the matrix
$
P_0 \defeq (I_n \otimes e_1)^\tp P(I_n \otimes e_1)
$
where $e_1 = \bmat{1 & 0 & \dots & 0}^\tp \in \R^{d\times 1}$. Note that $P_0$ is an $n\times n$ principal submatrix of $P$, and therefore $P_0\succ 0$ because $P \succ 0$. Multiplying the left-hand side of~\eqref{eq:expLMI2} by $(I_n \otimes e_1)^\tp$ on the left and $(I_n \otimes e_1)$ on the right, we conclude that $P_0$ satisfies~\eqref{eq:expLMI3}. Thus, $\hat P = P_0 \otimes I_d$ is also a solution to~\eqref{eq:expLMI2}. In other words, \eqref{eq:expLMI2} is feasible if and only if~\eqref{eq:expLMI3} is feasible.

\subsection{Known bounds for first-order optimization algorithms}

The following proposition summarizes some of the known bounds for optimizing strongly convex functions.

\begin{prop} \label{prop:strongly-convex-rates} The following table gives worst-case rate bounds for different algorithms and parameter choices when applied to a class of \textbf{strongly convex functions}. We assume here that $f:\R^d \to \R$ where $f\in\cvx$. Again, we define $\kappa\defeq L/m$.

\begin{center}
	\smallskip
	\begin{tabular}{|l|l|l|l|}
		\hlinet
		Method & Parameter choice & Rate bound & Comment \\ \hlinet\rule{0pt}{3.0ex}%
		Gradient & $\alpha = \frac{1}{L}$ & $\rho \le \sqrt{\frac{\kappa-1}{\kappa+1}}$ & popular choice \\
		Nesterov & $\alpha = \frac{1}{L},\, \beta = \frac{\sqrt{\kappa}-1}{\sqrt{\kappa}+1}$ & $\rho \le \sqrt{1-\frac{1}{\sqrt{\kappa}}}$ & standard choice \\ \hlinet
		Gradient & $\alpha = \frac{2}{L+m}$ & $\rho = \frac{\kappa-1}{\kappa+1}$ & optimal tuning \\[1mm]\hline
	\end{tabular}
	\smallskip
\end{center}
\end{prop}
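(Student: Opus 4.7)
The plan is to apply Theorem~\ref{thm:main} together with the dimensionality reduction of Section~\ref{subsec:dim_reduction} to each row of the table. For each algorithm, I would cast it as a dynamical system with matrices $(A,B,C)$ (already done for both methods in Section~\ref{sec:opt-dyn}), select an IQC for $\grad f$ that is strong enough to prove the desired rate, form the combined system $(\hat A,\hat B,\hat C,\hat D)$ as in~\eqref{eq:comboss2}, and exhibit explicit multipliers together with a matrix $P\succ 0$ verifying the LMI~\eqref{eq:expLMI} at the claimed value of $\rho$.

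For the two Gradient-method rows I would use the sector IQC of Lemma~\ref{lem:sector_iqc}. Because this IQC is static (its $\Psi$ is a constant matrix), the combined system has no additional filter states: $\hat A=1$, $\hat B=-\alpha$, and $\bmat{\hat C & \hat D}$ is just the sector matrix. After dimensionality reduction, $P_0$ is a positive scalar that may be taken equal to $1$, and~\eqref{eq:expLMI} collapses to the $2\times 2$ negative-semidefinite condition
\[
\bmat{1-\rho^2 - 2\lambda mL & -\alpha + \lambda(L+m) \\ -\alpha + \lambda(L+m) & \alpha^2 - 2\lambda} \preceq 0
\]
in the single variable $\lambda\ge 0$. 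For $\alpha=2/(L+m)$, choosing $\lambda = 2/(L+m)^2$ kills the off-diagonal entry and reduces the condition to two scalar inequalities that are both tight at $\rho=(\kappa-1)/(\kappa+1)$. For $\alpha=1/L$, an analogous choice of $\lambda$ (guided by forcing the determinant of the $2\times 2$ matrix to vanish) yields $\rho^2 = (\kappa-1)/(\kappa+1)$, matching the claimed bound.

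For Nesterov's method, the sector IQC alone is not expected to give acceleration, so I would use the weighted off-by-one IQC of Lemma~\ref{lem:combo}, which augments the state with one filter coordinate $\zeta$ and introduces a pair of multipliers $\lambda_1,\lambda_2\ge 0$ as in Remark~\ref{rem:wobo}. After dimensionality reduction, the combined system has state dimension three, so the unknown is a $3\times 3$ matrix $P_0\succ 0$ and~\eqref{eq:mainLMI2} becomes a $4\times 4$ LMI at the claimed value $\rho = \sqrt{1-1/\sqrt\kappa}$. The step is to exhibit an explicit closed-form $(P_0,\lambda_1,\lambda_2)$ that certifies feasibility; a natural ansatz is for $P_0$ to be rank-one plus a small correction, reflecting the fact that Nesterov's classical estimate-sequence analysis rests on a quadratic Lyapunov function in $(x_k,x_{k-1})$.

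The main obstacle is the Nesterov row: the entries of $P_0$ and the two multipliers must be tuned so that the indefinite correction from $M_0$ exactly cancels the slack in the $4\times 4$ LMI at the stated $\rho$. I would pin down the ansatz by first solving~\eqref{eq:expLMI3} numerically on a grid of $\kappa$ values as in Section~\ref{subsec:comp_approach}, reading off the scaling of the optimal $P_0$ and $(\lambda_1,\lambda_2)$ in $\kappa$, and then verifying the closed-form expressions algebraically by a Schur complement. The Gradient rows are, by contrast, routine $2\times 2$ calculations. In all three cases the prefactor $\sqrt{\cond(P)}$ from Theorem~\ref{thm:main} depends only on $\kappa$ and the algorithm parameters, not on the ambient dimension $d$, so the resulting rate bounds are dimension-free as claimed.
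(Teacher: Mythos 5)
Your route is genuinely different from the paper's. Proposition~\ref{prop:strongly-convex-rates} is presented there as a summary of classical results: the two Gradient rows follow from the bound $\rho \le \sqrt{1-2\alpha mL/(L+m)}$ of \cite{NesterovBook} by substituting $\alpha=1/L$ and $\alpha=2/(L+m)$, and the Nesterov row is the estimate-sequence result of \cite{NesterovBook}; no LMI is involved. Your plan to re-derive the table from Theorem~\ref{thm:main} does work for the Gradient rows: the paper's own Section~\ref{grad} carries out exactly the $2\times 2$ sector-IQC computation you describe and obtains $\rho = \max\bigl\{|1-\alpha m|,\,|1-\alpha L|\bigr\}$, which equals $(\kappa-1)/(\kappa+1)$ for $\alpha=2/(L+m)$ and equals $1-1/\kappa \le \sqrt{(\kappa-1)/(\kappa+1)}$ for $\alpha=1/L$; since feasibility of~\eqref{eq:expLMI} at a smaller $\rho$ implies feasibility at any larger $\rho$, the stated bound follows (and in fact the IQC route proves something strictly stronger than the first row, which is precisely the paper's point when it calls that bound loose).

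The gap is the Nesterov row. There your argument is a plan rather than a proof: everything hinges on ``exhibiting an explicit closed-form $(P_0,\lambda_1,\lambda_2)$'' that certifies the LMI at $\rho=\sqrt{1-1/\sqrt{\kappa}}$ for every $\kappa$, and that certificate is never produced. The paper itself is evidence that this step is nontrivial: Section~\ref{nest} states that the rate certified by the weighted off-by-one IQC was found numerically by a bisection on $\rho$ and that no analytical expression for it is available. The numerics in Figure~\ref{fig:Nesterov_rate} indicate the LMI is feasible at (indeed somewhat below) $\sqrt{1-1/\sqrt{\kappa}}$, but solving the SDP on a grid of $\kappa$ values and ``reading off the scaling'' does not constitute a proof for all $\kappa$, and the subsequent algebraic verification by Schur complement is exactly the part you have not done. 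Until you supply and verify those closed-form multipliers and $P_0$ --- or fall back, as the paper does, on citing Nesterov's estimate-sequence argument for that row --- the second row of the table remains unproven in your write-up.
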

The Gradient bounds in the table above follow from the bound $\rho \le \sqrt{1-\frac{2\alpha m L}{L+m}}$, which is proven in~\cite{NesterovBook}. A tighter Gradient bound $\rho \le \max \bl\{ |1-\alpha m|, |1-\alpha L| \br\}$ is proven in~\cite{polyak1987introduction} but makes the additional assumption that $f$ is twice differentiable. The Nesterov bound in Proposition~\ref{prop:strongly-convex-rates} is proven in~\cite{NesterovBook} using the technique of estimate sequences. There are no known global convergence guarantees for the Heavy-ball method in the case of strongly convex functions, but it is proven in~\cite{polyak1987introduction} that the Heavy-ball method converges~\emph{locally} with the same rate as in Proposition~\ref{prop:quadratic-rates}.

In the following sections, we will use IQC machinery to demonstrate that the first two bounds in Proposition~\ref{prop:strongly-convex-rates} are loose.  We will construct tighter bounds for the strongly convex case without requiring additional assumptions about locality or twice-differentiability.   We will then use our framework to help guide a refutation of the convergence of the Heavy-ball method.

\subsection{The Gradient method}\label{grad}
The Gradient method with constant stepsize is among the simplest optimization schemes. The recursion is given by
\begin{equation}\label{eq:grad_req}
\xi_{k+1} = \xi_k - \alpha\grad f(\xi_k)
\end{equation}
We will analyze this algorithm by applying Theorem~\ref{thm:main}. Since $f\in\cvx$, we may use the sector IQC of Lemma~\ref{lem:sector_iqc} and \eqref{eq:expLMI} together with the dimensionality reduction of Section~\ref{subsec:dim_reduction} yields the following SDP. 
\begin{equation}\label{eq:ex_lmix}
\bmat{(1-\rho^2)P & -\alpha P \\ -\alpha P & \alpha ^2P} + \lambda \bmat{ -2mL & (L+m) \\ (L+m) & -2} \preceq 0,
\qquad P \succ 0,\qquad
\lambda \ge 0
\end{equation}
Note that $P$ is $1\times 1$, so we may set $P=1$ without loss of generality and we obtain the following LMI in $(\rho^2,\lambda)$.
\begin{equation}\label{eq:ex_lmixx}
\bmat{1-\rho^2 & -\alpha  \\ -\alpha  & \alpha ^2} + \lambda \bmat{ -2mL & L+m \\ L+m & -2} \preceq 0
\qquad\text{and}\qquad
\lambda \ge 0
\end{equation}
Using Schur complements,~\eqref{eq:ex_lmixx} is equivalent to
\begin{equation}\label{eq:ex_almosttherex}
\lambda \ge \frac{\alpha^2}{2}
\qquad\text{and}\qquad
\rho^2 \ge 1-2mL\lambda - \frac{ (\alpha - (L+m)\lambda)^2}{ 2\lambda - \alpha^2}
\end{equation}
By analyzing the lower bound on $\rho$ in~\eqref{eq:ex_almosttherex}, we can find the optimal choice of $\lambda$ as a function of the stepsize $\alpha$. Omitting the details, we eventually obtain the simple expression $\rho = \max \bl\{ |1-\alpha m|, |1-\alpha L| \br\}$. This is precisely the bound found for the quadratic case, as derived in Appendix~\ref{A:prop1proof}. However, we have shown something much stronger here, since the only assumption we made about $f$ is that $\grad f$ satisfies the sector IQC of Lemma~\ref{lem:sector_iqc}. In particular, the Gradient method rates in Proposition~\ref{prop:quadratic-rates} hold not only for quadratics, but also for strongly convex functions, and even for functions that change or switch over time (either stochastically, adversarially, or otherwise), so long as each function satisfies the pointwise sector constraint.
Note that~\eqref{eq:ex_lmixx} can be transformed using Schur complements:
\begin{equation}\label{eq:lmifull}
\bmat{ -2mL\lambda-\rho^2 & (L+m)\lambda & 1 \\ (L+m)\lambda & -2\lambda & -\alpha  \\ 1 & -\alpha  & -1 } \preceq 0
\end{equation}
And now~\eqref{eq:lmifull} is linear in $(\rho^2,\lambda,\alpha )$. This formulation allows one to directly answer questions such as ``what range of stepsizes can yield a given rate?''.

\subsection{Nesterov's accelerated method}\label{nest}

Nesterov's accelerated method with constant stepsize converges at a linear rate. There exists some $c>0$ such that for any initial condition $\xi_0$,
\[
\|\xi_{k}-\xi_\star\| \leq c \rho^{k} \|\xi_0-\xi_\star\|
\qquad\text{with}\qquad
\rho = \sqrt{1 - \sqrt{\tfrac{m}{L}}}
\]
when applied to functions $f\in\cvx$. In this case, the parameters are the standard parameters from Proposition~\ref{prop:strongly-convex-rates}, which are $\alpha \defeq 1/L$ and $\beta \defeq (\sqrt{L}-\sqrt{m})/(\sqrt{L}+\sqrt{m})$ \cite{NesterovBook}. Nesterov also showed that a \emph{lower bound} on convergence rate for \emph{any} algorithm of the form~\eqref{eq:lure} and for any $f \in \cvx$ is given by
\begin{equation}\label{rho_opt}
\|\xi_{k}-\xi_\star\| \geq \rho_\textup{opt}^{k} \|\xi_0-\xi_\star\|
\qquad\text{with}\qquad
\rho_\textup{opt} = \frac{\sqrt{L}-\sqrt{m}}{\sqrt{L}+\sqrt{m}}\,.
\end{equation}
Since $\rho$ and $\rho_\textup{opt}$ behave similarly as $L/m \to \infty$, Nesterov's accelerated method is sometimes called ``optimal'' or ``nearly optimal''.
%
%in the sense that there is no algorithm of the form~\eqref{eq:lure} that converges with a rate faster than
%\begin{equation}\label{rho_opt}
%\rho_\textup{opt} = \frac{\sqrt{L}-\sqrt{m}}{\sqrt{L}+\sqrt{m}}\,.
%\end{equation}

We computed the rate bounds using Theorem~\ref{thm:main} using either the sector IQC of Lemma~\ref{lem:sector_iqc}, or a combination of the sector IQC and the weighted off-by-one IQC of Lemma~\ref{lem:combo}. It is important to note that unlike the Gradient method case, the LMI~\eqref{eq:expLMI} is no longer linear in $\rho^2$. Therefore, we found the minimal $\rho$ by performing a bisection search on $\rho$, see the first plot in Figure~\ref{fig:Nesterov_rate}.
\begin{figure}[ht]
\centering
\includegraphics[width=.8\linewidth]{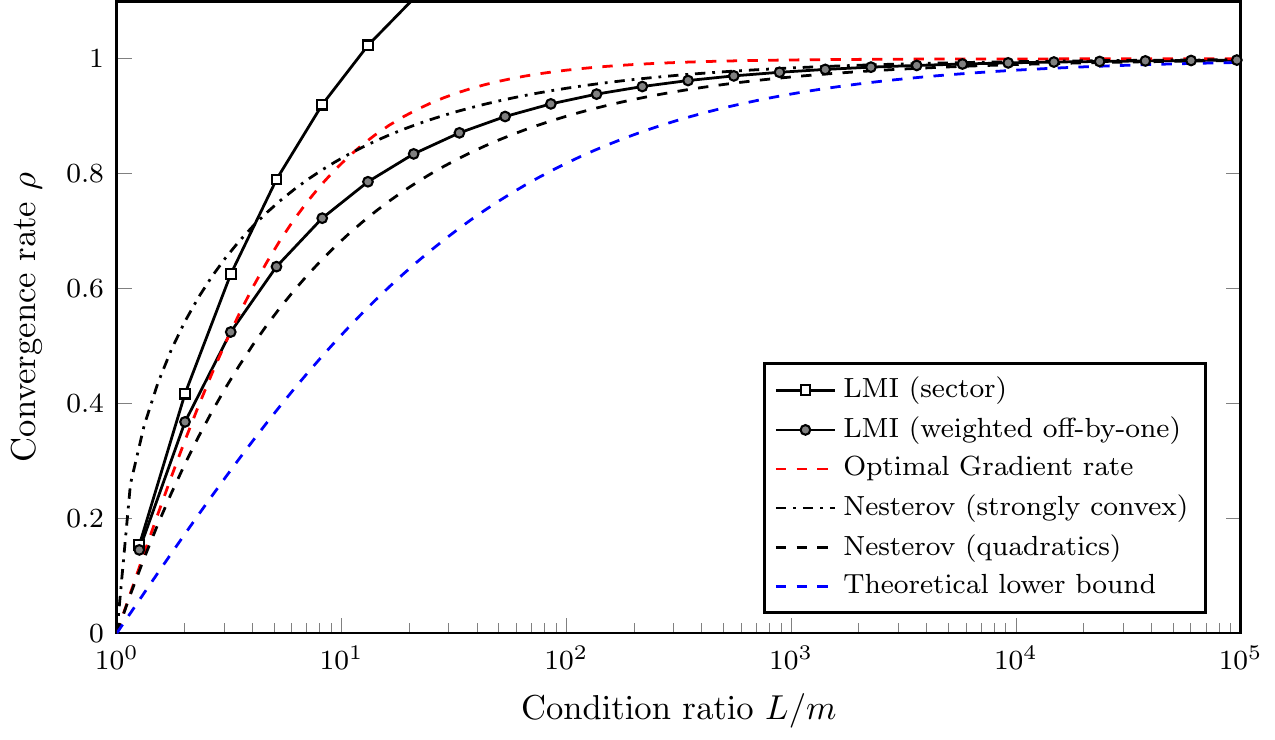}
\includegraphics[width=.8\linewidth]{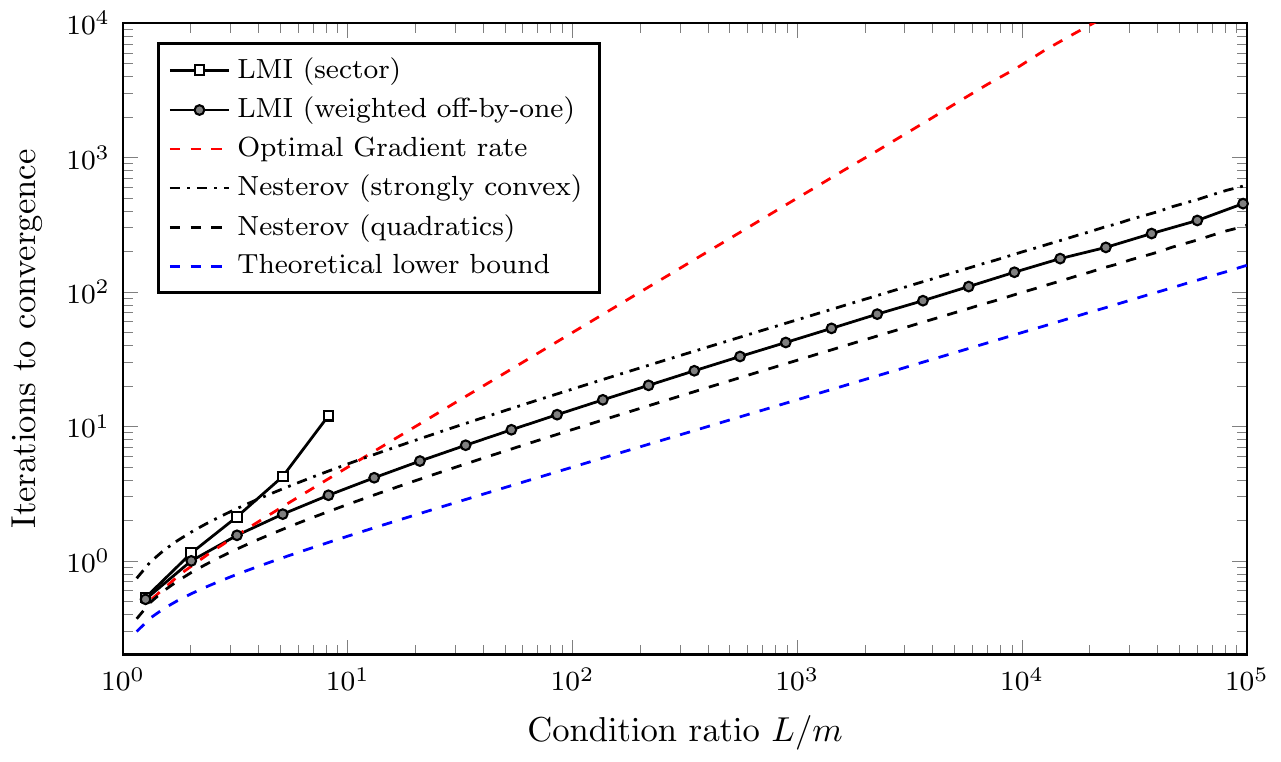}
\caption{Upper bounds for Nesterov's accelerated method applied to $f\in\cvx$ using the standard tuning in Proposition~\ref{prop:strongly-convex-rates}. We tested both the sector IQC and the weighted off-by-one IQC. The first plot shows convergence rate and the second plot shows number of iterations required to achieve convergence to a specified tolerance. The \emph{theoretical lower bound} $\rho_\textup{opt}$ is given in~\eqref{rho_opt}. The rate that can be certified using the LMI approach is strictly better than the rate proved in~\cite{NesterovBook} using estimate sequences.\label{fig:Nesterov_rate}}
\end{figure}

The rate obtained using the sector IQC alone is very poor. To understand why, recall from Lemma~\ref{lem:sector_iqc} that the sector IQC allows for $f_k$ to be different at each iteration. Unlike the Gradient method, Nesterov's accelerated method is not robust to having a changing $f_k$. However, convergence can nevertheless be guaranteed as long as $\rho < 1$, which corresponds approximately to $L/m < 11.7$.

The rate obtained using the weighted off-by-one IQC improves upon the rate proven in~\cite{NesterovBook} using the estimate sequence approach (see Proposition~\ref{prop:strongly-convex-rates}). Note that we do not have an analytical expression for the improved bound; it was found numerically by solving the LMI of Theorem~\ref{thm:main}.

Given that $\|x_k\| \le \sqrt{\cond(P)} \rho^{k} \|x_0\|$, if we seek the smallest $k$ such that $\|x_k\| \le \epsilon$, then it suffices that $\sqrt{\cond(P)} \rho^{k} \|x_0\| \le \epsilon$. This implies that
\begin{equation}\label{dolphin}
k \ge  \left( -\frac{1}{2\log\rho}\right) \log\left(\frac{\cond(P)\|x_0\|^2}{\epsilon^2}\right)
\end{equation}
For the second plot in Figure~\ref{fig:Nesterov_rate}, we plotted $-1/\log\rho$ versus $L/m$ to get a sense of how the relative iteration count scales as a function of condition number.  As we can see from Figure~\ref{fig:Nesterov_rate}, Nesterov's method applied to quadratics is within a factor of $2$ of the theoretical lower bound, and the bound we can prove for Nesterov's method applied to strongly convex functions is within a factor of $1.4$ of the bound for quadratics.

Finally, we must also ensure that $P$ is reasonably well-conditioned. In Figure~\ref{fig:Nesterov_cond}, we see that $\cond(P)$ appears to be proportional to $L/m$, which agrees with the scale factor found by Nesterov~\cite{NesterovBook}.

If we repeat the above experiments, but instead using the optimal tuning of Nesterov's method given in Proposition~\ref{prop:quadratic-rates}, the resulting plots are virtually identical. The only differences are that the curves are shifted down slightly because the optimal rate for quadratics is now $1-\tfrac{2}{\sqrt{3\kappa+1}}$ instead of $1-\tfrac{1}{\sqrt{\kappa}}$. The sector-IQC curve goes unstable a little sooner as well, at around $L/m \approx 10$. Roughly speaking, if we use the optimal tuning we can guarantee slightly faster convergence but slightly less robustness.
\begin{figure}[ht]
\centering
\includegraphics[width=0.8\linewidth]{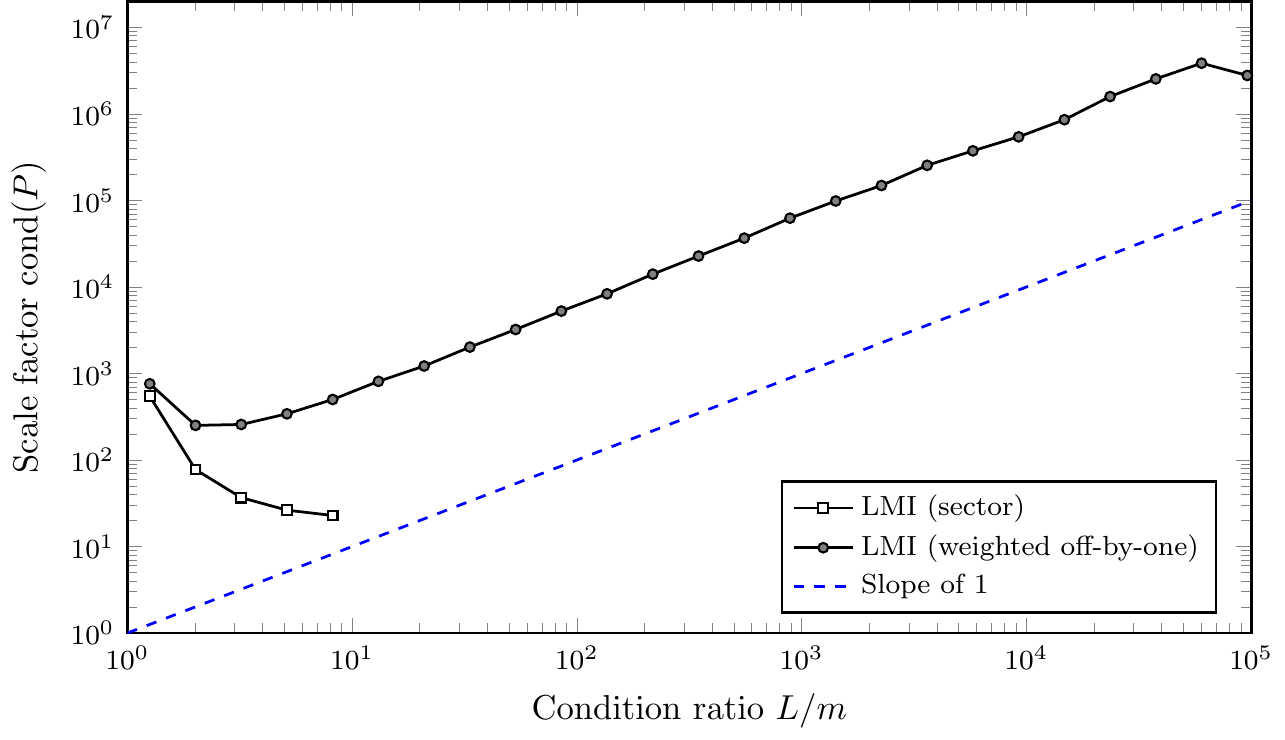}
\caption{Condition number $\cond(P)$ when using the weighted off-by-one IQC. It is within a constant factor of $L/m$. Note that $\log\cond(P)$ appears in~\eqref{dolphin} for computing minimum iterations to convergence.\label{fig:Nesterov_cond}}
\end{figure}

\subsection{The Heavy-ball method}\label{subsec:hb_counterexample}

The optimal Heavy-ball rate for quadratics in Proposition~\ref{prop:quadratic-rates} matches Nesterov's lower bound~\eqref{rho_opt} for strongly convex functions. 
Although the Heavy-ball method and Nesterov's accelerated method have similar recursions, Figures~\ref{fig:Nesterov_rate} and~\ref{fig:Heavyball_rate} tell very different stories.
When we allow for a different $f_k$ at every iteration (sector IQC), we can guarantee stability when $L/m \approx 6$ or less. When we include the weighted off-by-one IQC as well, we can only guarantee stability when $L/m\approx 18$ or less. While it seems possible that using more IQCs could potentially improve this upper bound, it turns out that the poor quality of these bounds is due to something more serious: {\bf the Heavy-ball method optimized for quadratics does not converge for general $f\in\cvx$}.
\begin{figure}[ht]
\centering
\includegraphics[width=0.8\linewidth]{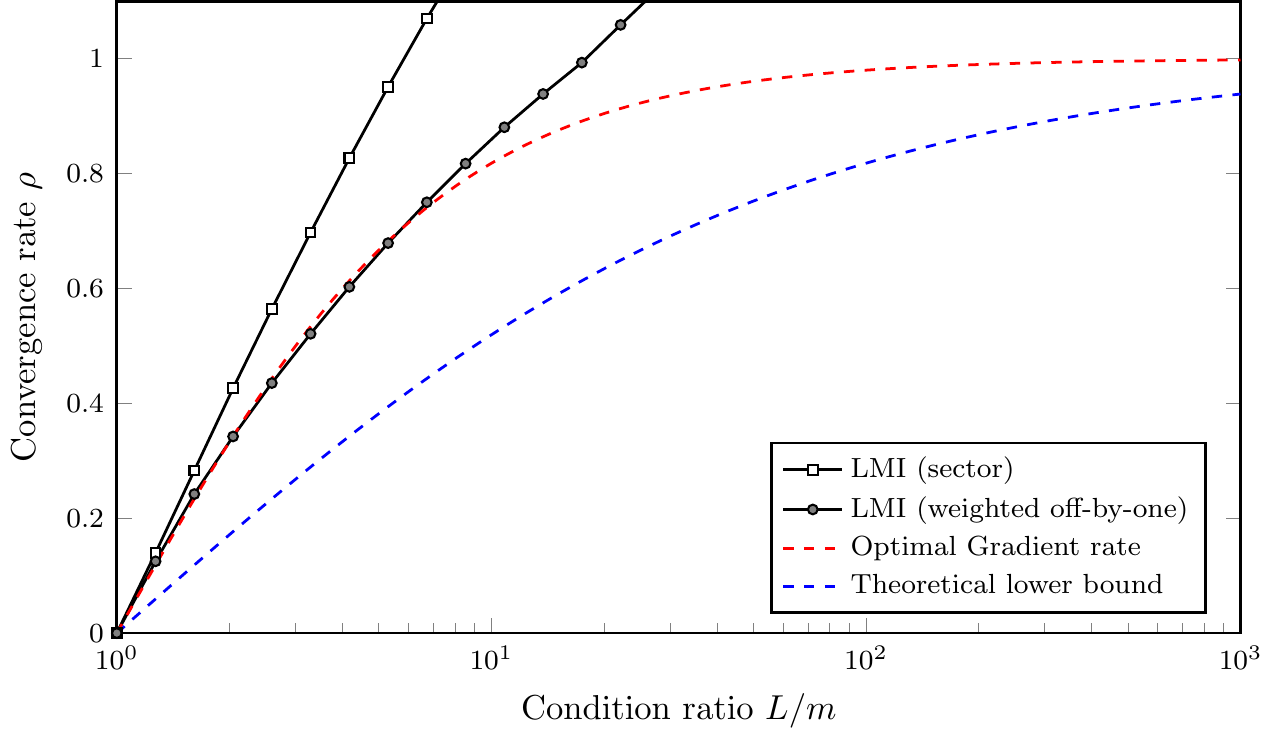}
\includegraphics[width=0.8\linewidth]{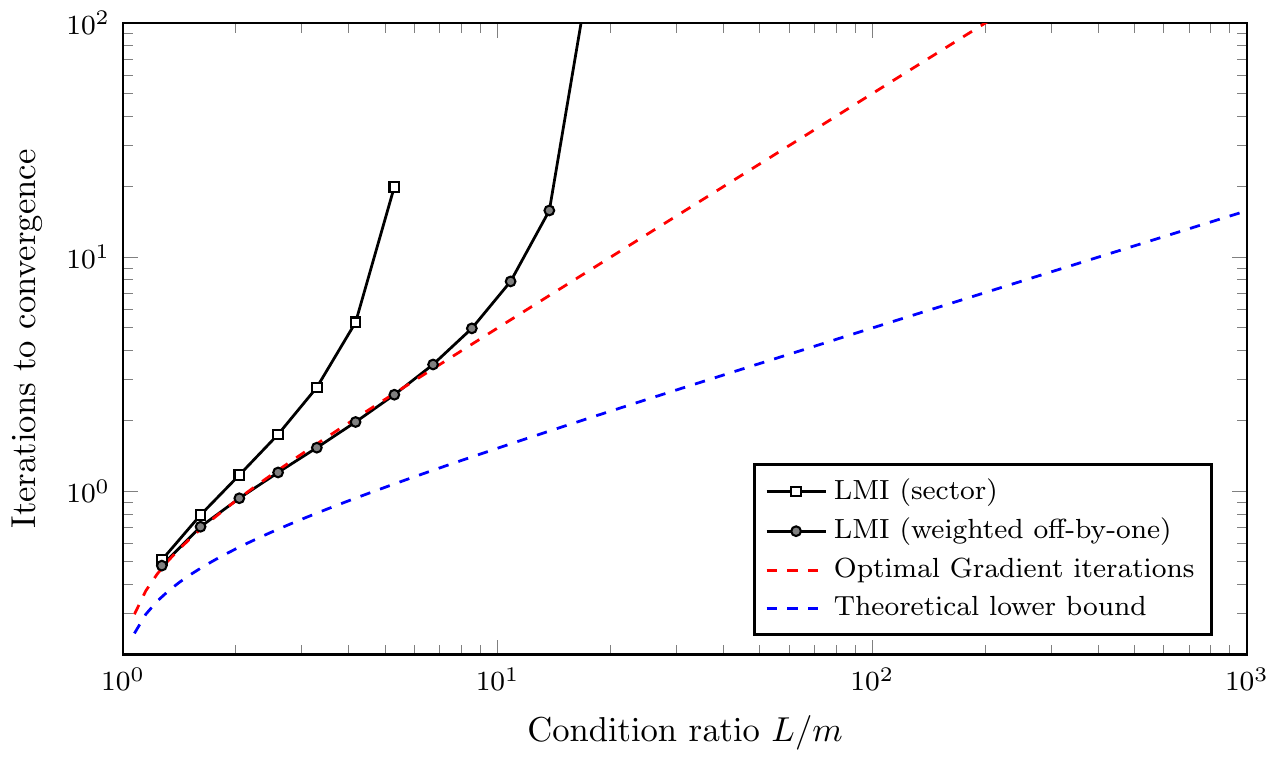}\caption{Upper bounds for the Heavy-ball method, using either the sector IQC or the weighted off-by-one IQC. Convergence rate (first plot) and number of iterations required to achieve convergence to a specified tolerance (second plot). Note that the theoretical lower bound is equal to the optimal Heavy-ball rate for quadratics. The \emph{theoretical lower bound} $\rho_\textup{opt}$ is given in~\eqref{rho_opt}. \label{fig:Heavyball_rate}}
\end{figure}

To find an example of an $f(x)$ that leads to a non-convergent Heavy-ball method, Figure~\ref{fig:Heavyball_rate} indicates that we should search for $L/m > 18$. The following one-dimensional example does the job.
\begin{equation}\label{counterexample}
\grad f(x) = \begin{cases} 25 x & x < 1 \\
					 x + 24 & 1 \le x < 2 \\
					 25 x - 24 & x \ge 2
			\end{cases}
\end{equation}
It is easy to check that $\grad f(x)$ is continuous and monotone, and so $f\in\cvx$ with $m=1$ and $L=25$. When using an initial condition in the interval $3.07 \le x_0 \le 3.46$, the Heavy-ball method produces a limit cycle with oscillations that never damp out. The first 50 iterates for $x_0=3.3$ are shown \color{black} in Figure~\ref{fig:hb_counterexample}, and a plot of $f(x)$ with the limit cycle overlaid is shown in Figure~\ref{fig:hb_limitcycle}.
\begin{figure}[ht]
\centering
\includegraphics[width=0.8\linewidth]{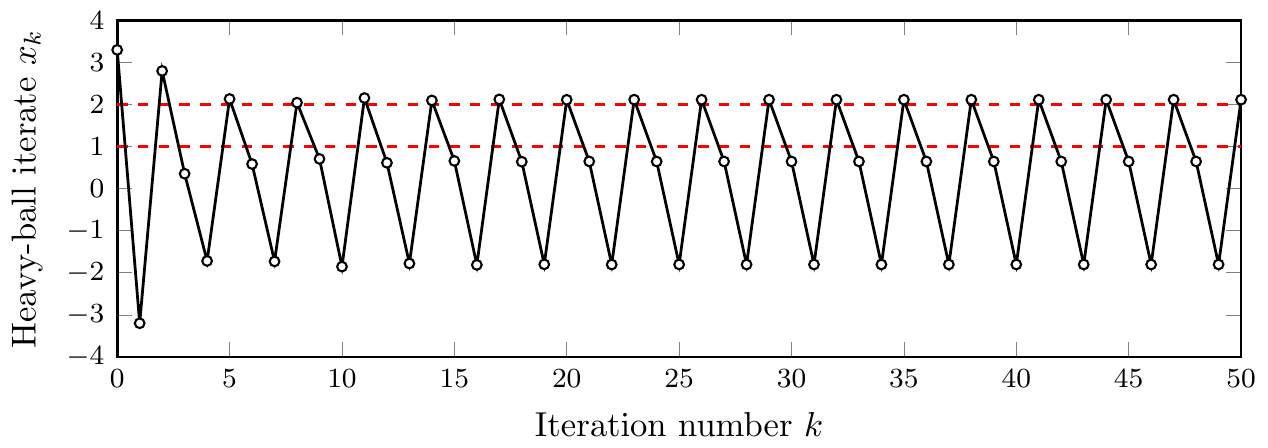}
\caption{Iteration history of the Heavy-ball method when optimizing $f(x)$ defined in~\eqref{counterexample}. Dashed lines separate the pieces of $f(x)$. The iterates tend to a limit cycle, so the Heavy-ball method does not converge for this particular strongly convex function.\label{fig:hb_counterexample}}
\end{figure}

\begin{figure}[ht]
\centering
\includegraphics[width=0.8\linewidth]{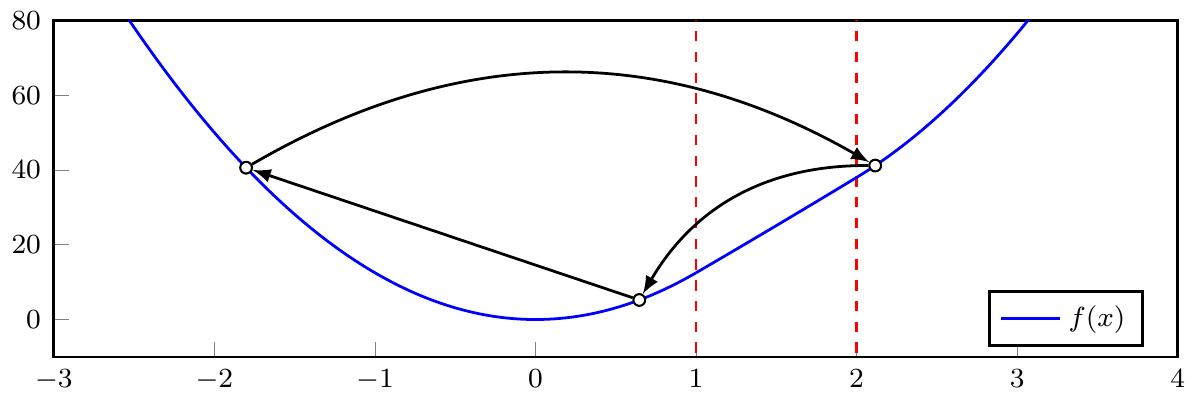}
\caption{Graph of $f(x)$ defined in~\eqref{counterexample} with the limit cycle overlaid on top.\label{fig:hb_limitcycle}}
\end{figure}

For a detailed proof that $f$ can indeed converge to a limit cycle, see Appendix~\ref{appendix:hb_proof}. We further investigate the stability of the Heavy-ball method in Section~\ref{sec:applications}.

%%%%%%%%%%%%%%%%%%%%%%%%%%%%%%%%%%%%%%%%%%%%%%%%%%%%%%%%%%%%%%%%%%%%%%%%%%%%%%%
\section{Further applications}\label{sec:applications}

\subsection{Stability of the Heavy-ball method}
We saw in Section~\ref{subsec:hb_counterexample} that the Heavy-ball method that uses $\alpha$ and $\beta$ optimized for quadratic functions is unstable for general strongly convex functions. A natural question to ask is whether the Heavy-ball method is stable over the class $\cvx$ for \emph{some} choice of $\alpha$ and $\beta$. This experiment is easy to carry out in our framework, because choosing new values of $\alpha$ and $\beta$ simply amounts to changing parameters in the LMI. We chose $\alpha = \tfrac1L$, and for a sampling of points in $\beta\in[0,1]$, we evaluated the corresponding Heavy-ball method using Theorem~\ref{thm:main} together with the weighted off-by-one IQC. See Figure~\ref{fig:Heavyball_grid_rate}.
\begin{figure}[ht]
\centering
\includegraphics[width=0.8\linewidth]{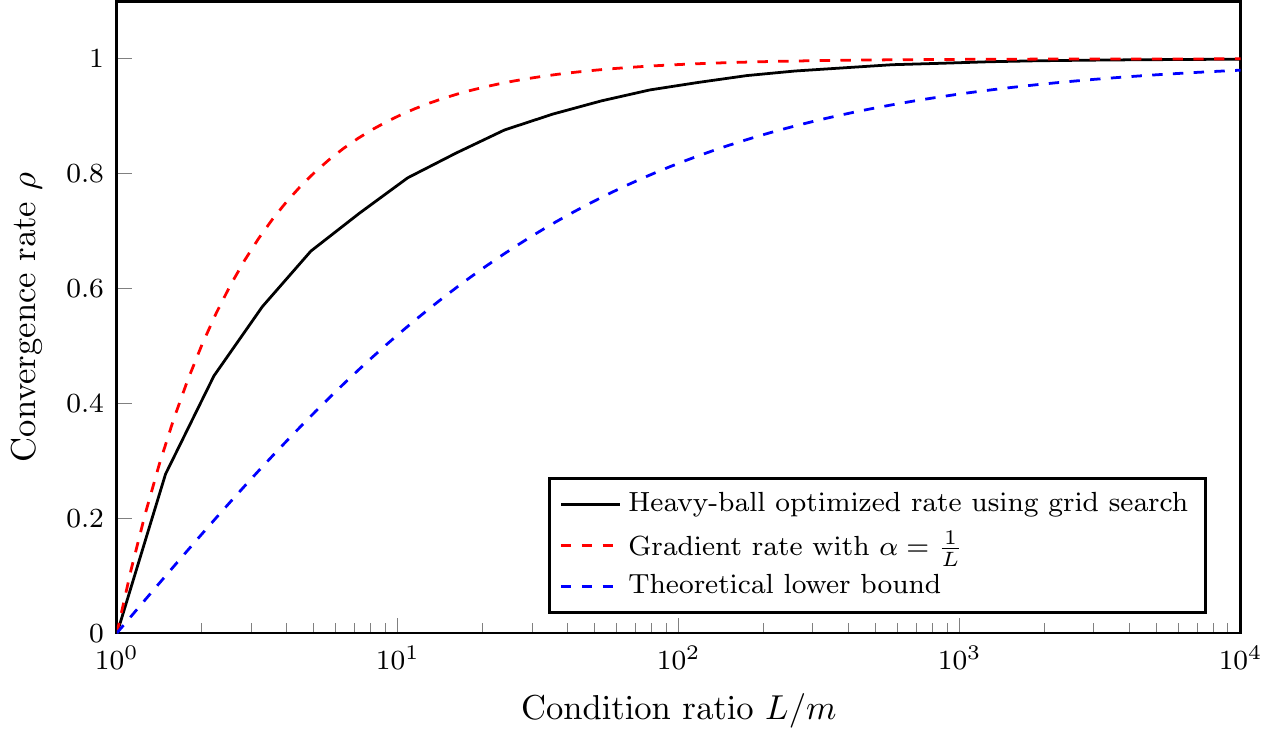}
\includegraphics[width=0.8\linewidth]{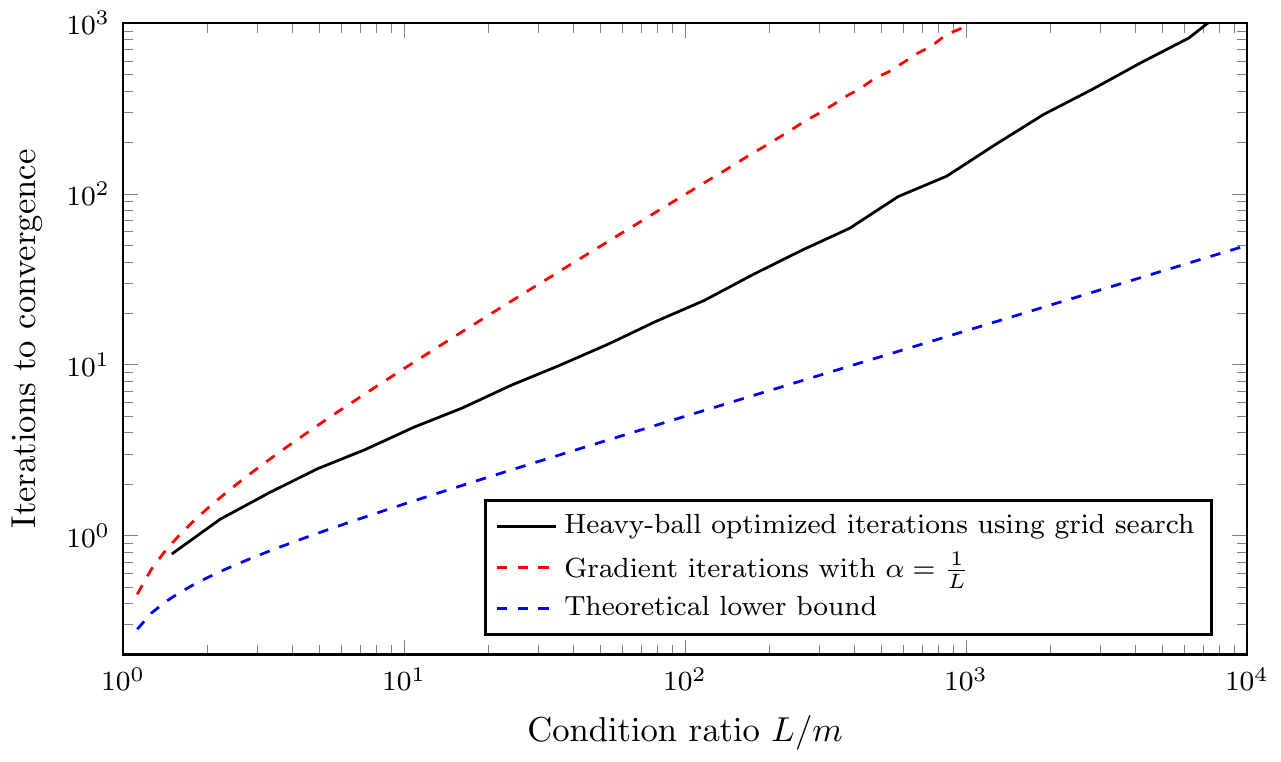}\caption{Upper bounds for the Heavy-ball method. We fixed $\alpha=\tfrac1L$, and for each $L/m$, we picked $\beta$ that led to the optimal rate. The result is the solid black curve. We plotted convergence rate (first plot) and number of iterations required to achieve convergence to a specified tolerance (second plot). The \emph{theoretical lower bound} $\rho_\textup{opt}$ is given in~\eqref{rho_opt} and is the same as the optimal Heavy-ball rate for quadratics.\label{fig:Heavyball_grid_rate}}
\end{figure}

The first plot shows convergence rate. When $\beta=0$, the Heavy-ball method becomes the Gradient method, which is always convergent. However, we can improve upon the gradient rate by optimizing over $\beta$. The best achievable rate is given by the black curve. The black curve lies strictly above the optimal Heavy-ball rate for quadratics, but below the optimal gradient rate.

In the second plot, we show the iterations required to achieve convergence. Again, the black curve represents the optimal parameter choice. As $L/m$ gets large, the envelope veers away from the optimal Heavy-ball curve and becomes parallel to the optimal gradient curve. So when $L/m$ is large, even when $\beta$ is chosen optimally, the Heavy-ball method is comparable to the Gradient method in worst-case for general strongly convex functions.

\subsection{Multiplicative gradient noise}\label{sec:multiplicative_noise}
A common consideration is the inclusion of noise in the gradient computation. One possible model is \emph{relative deterministic noise} where we assume the gradient error is proportional to the distance to optimality~\cite{polyak1987introduction}.  Instead of directly observing $\grad f(y)$, we see
$
	u_k = \grad f(y_k) + r_k\,,
$
where
\[
	\|r_k\| \leq \delta \|\grad f(y_k)\|
\]
for some small nonnegative $\delta$.  The IQC framework can be used to analyze such situations to study the robustness of various algorithms to this type of noise.

If $w_k$ is the true gradient, we actually measure $u_k=\Delta_k w_k$, where the gradient error is bounded above by a quantity proportional to the true gradient. In other words, we assume there is some $\delta > 0$ such that $\|u_k-w_k\| \le \delta \|w_k\|$. Squaring both sides of the inequality and rearranging, we obtain the IQC
\[
\bmat{ w_k \\ u_k }^\tp
\bmat{ \delta^2-1 & 1 \\ 1 & -1 }
\bmat{ w_k \\ u_k } \ge 0
\qquad\text{for all }k
\]
Note that this is simply the sector IQC with $m=1-\delta$ and $L=1+\delta$. We make no assumptions on how the noise is generated; it may be the output of a stochastic process, or could even be chosen adversarially. The modified block-diagram is shown in Figure~\ref{fig:delta}.
\begin{figure}[ht]
\centering
\includegraphics{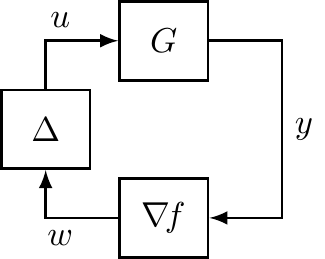}
\caption{Block-diagram representation of the standard interconnection with an additional block $\Delta$ representing multiplicative noise.\label{fig:delta}}
\end{figure}

By making a small modification, we can apply Theorem~\ref{thm:main}. We will look to show that the following inequality holds over all trajectories
\begin{equation}\label{eq:noiselmi}
x_{k+1}^\tp P x_{k+1} - \rho^2\, x_k^\tp P x_k + \lambda_1 z_k^\tp M z_k
+ \lambda_2 \bmat{ w_k \\ u_k }^\tp
\bmat{ \delta^2-1 & 1 \\ 1 & -1 }
\bmat{ w_k \\ u_k }
\le 0
\end{equation}
for some $\lambda_1,\lambda_2\ge 0$. In order to formulate an LMI that implies a solution to~\eqref{eq:noiselmi}, we use the signal $\bmat{x_k^\tp & u_k^\tp & w_k^\tp }$. Consequently, the matrices $(\hat A, \hat B,\hat C,\hat D)$ from \eqref{eq:comboss}--\eqref{eq:comboss2} now become a map $(w_k,u_k) \mapsto z_k$. This leads to an LMI of the form \eqref{eq:mainLMI2} which is now block-$3\times 3$ instead of the $2\times 2$ LMI of Theorem~\ref{thm:main}. The proof is identical to that of Theorem~\ref{thm:main}.

\paragraph{Gradient method} Our first experiment is to test the Gradient method. We used noise values of $\delta\in\{0.01,0.02,0.05,0.1,0.2,0.5\}$. See Figure~\ref{fig:Gradient_pnoise_rate_1}.

\begin{figure}[ht]
\centering
\includegraphics[width=0.8\linewidth]{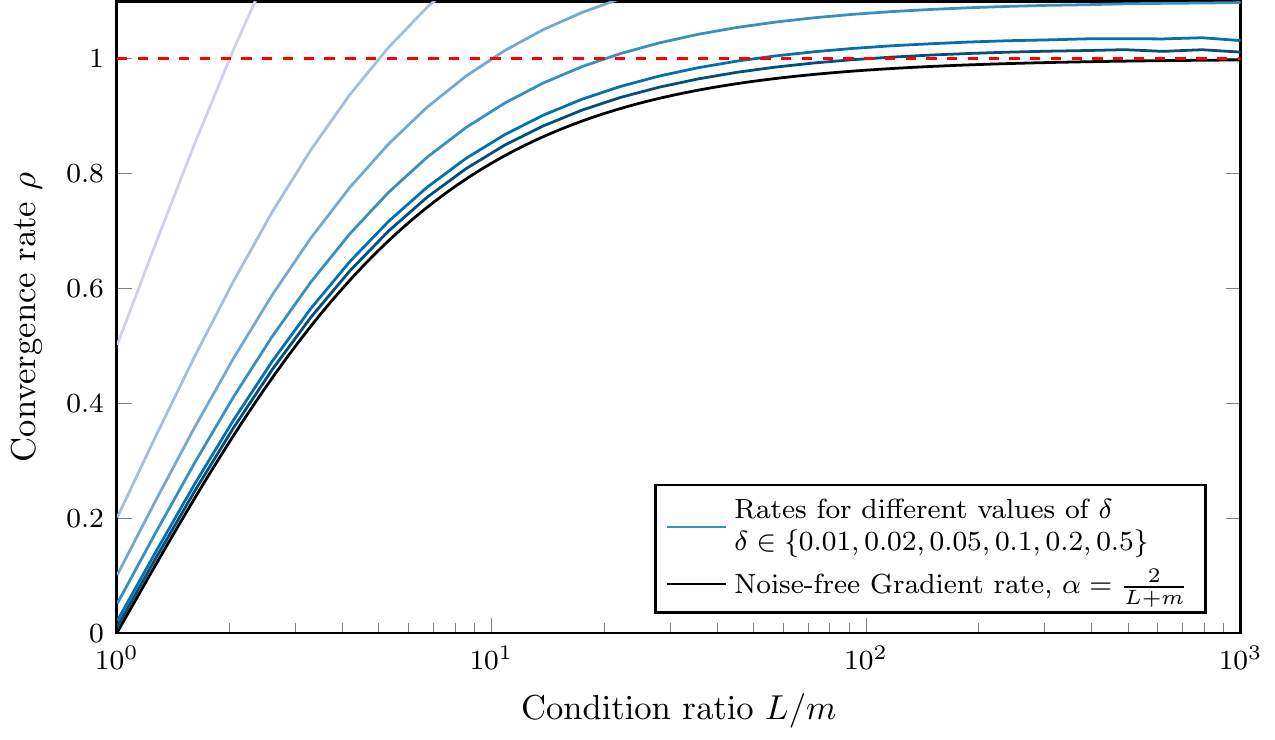}
\includegraphics[width=0.8\linewidth]{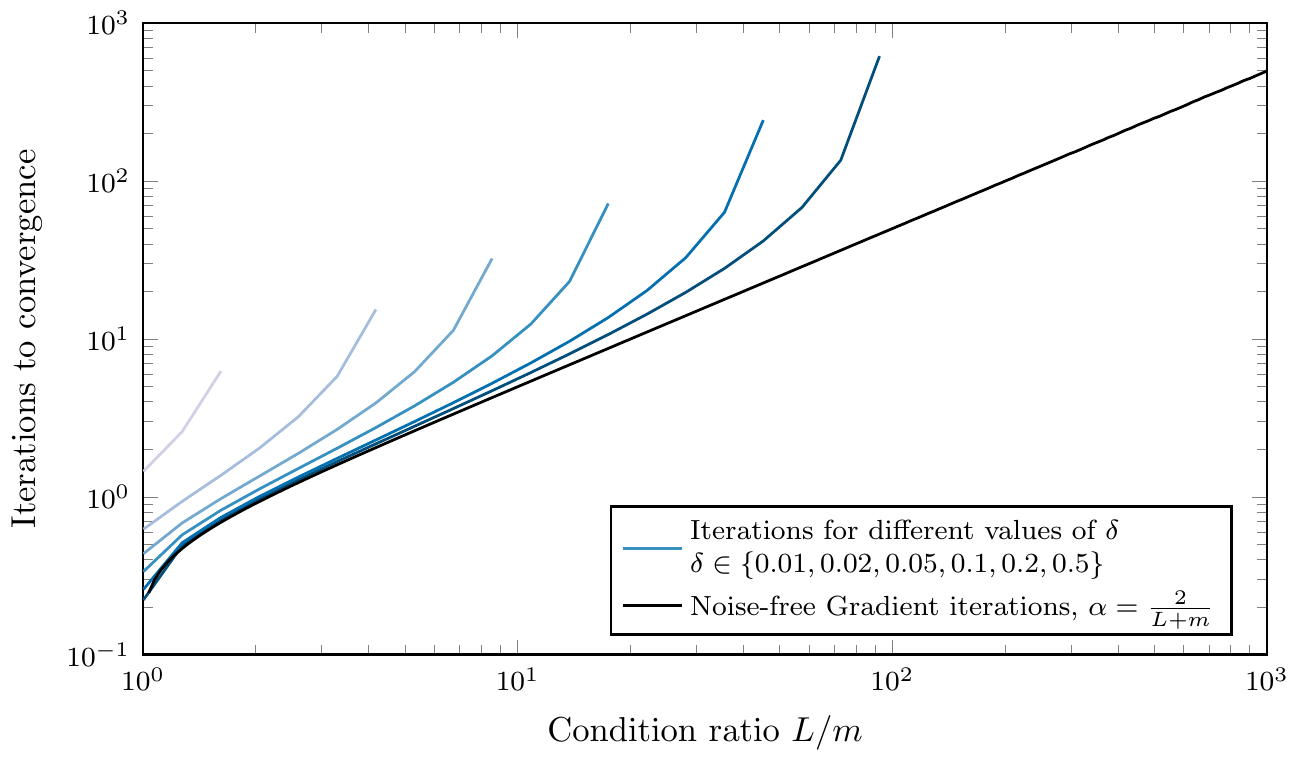}
\caption{Convergence rate and iterations to convergence for the Gradient method with $\alpha=\tfrac{2}{L+m}$, for various noise parameters $\delta$. This method is not robust to noise.\label{fig:Gradient_pnoise_rate_1}}
\end{figure}

In examining Figure~\ref{fig:Gradient_pnoise_rate_1}, we observe that the Gradient method with stepsize $\tfrac{2}{L+m}$ is not very robust to multiplicative noise. Even with noise as low as 1\% ($\delta=0.01$), the Gradient method is no longer stable for $L/m > 100$. An explanation for this phenomenon is that in choosing the stepsize $\alpha$, we are trading off convergence rate with robustness. The choice $\tfrac{2}{L+m}$ yields the minimum worst-case rate, but is fragile to noise. If we pick a more conservative stepsize such as the popular choice $\alpha=\tfrac{1}{L}$, we obtain a very different picture. See Figure~\ref{fig:Gradient_pnoise_rate_2}.
\begin{figure}[ht]
\centering
\includegraphics[width=0.8\linewidth]{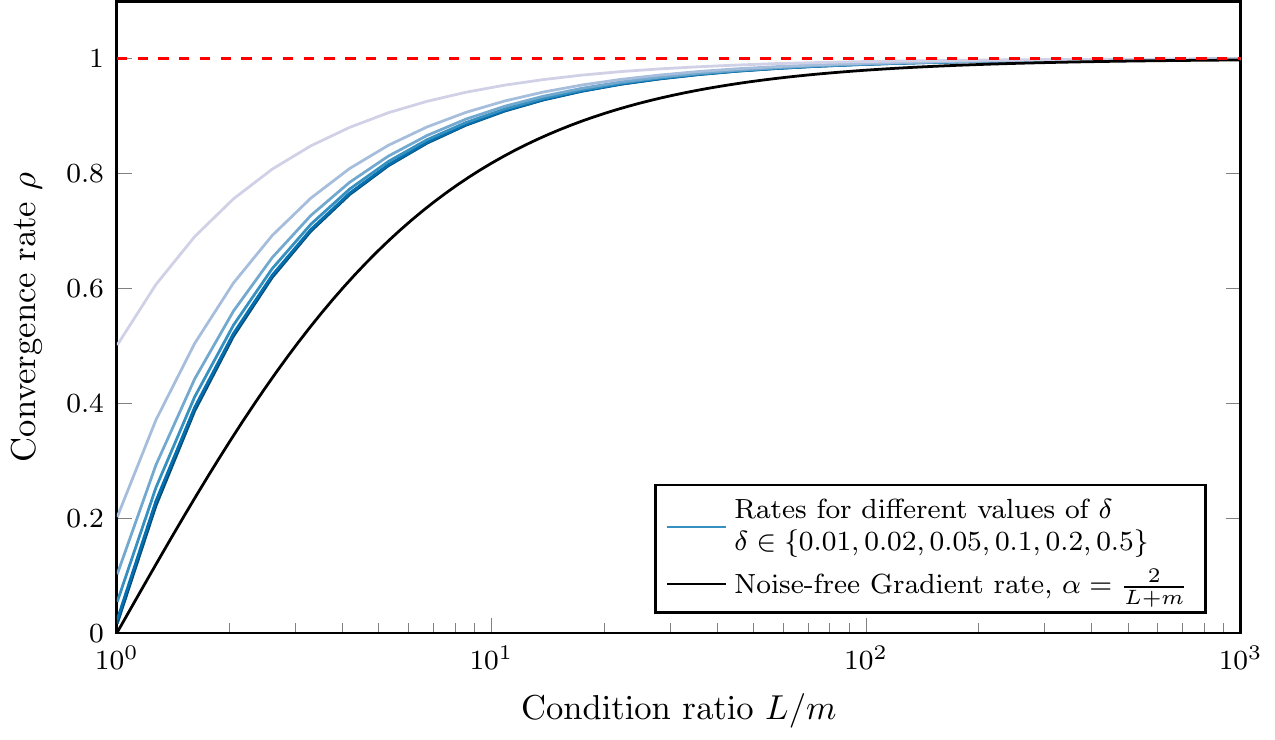}
\includegraphics[width=0.8\linewidth]{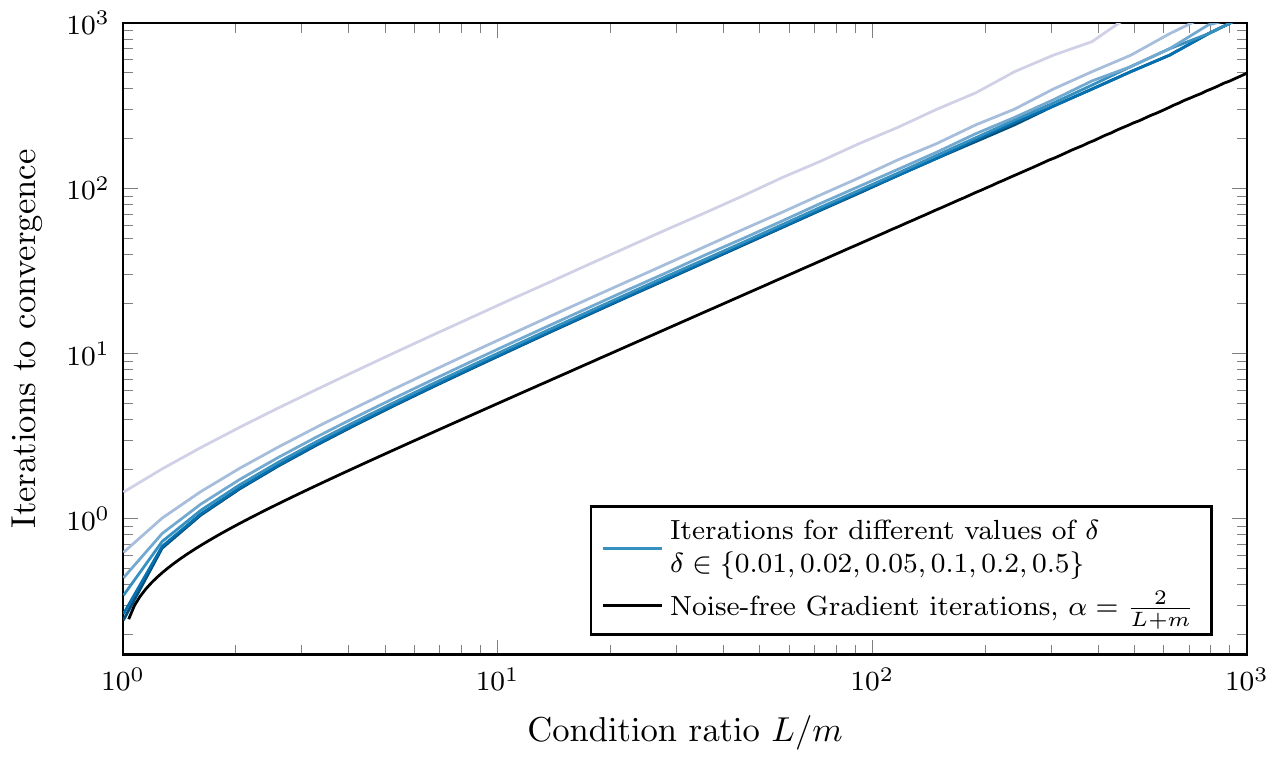}
\caption{
Convergence rate and iterations to convergence for the Gradient method with $\alpha=\tfrac{1}{L}$, for various noise parameters $\delta$. This method is robust to noise, but at the expense of a gap in performance compared to the optimal stepsize of $\alpha=\tfrac{2}{L+m}$.\label{fig:Gradient_pnoise_rate_2}}
\end{figure}

Notice that with the updated stepsize of $\alpha=\tfrac1L$, the Gradient method is now robust to multiplicative noise. Robustness comes at the expense of a degradation in the best achievable convergence rate. This degradation manifests itself as a gap in Figure~\ref{fig:Gradient_pnoise_rate_2} between the black curves and the other ones.

\paragraph{Nesterov's accelerated method} We can carry out an experiment similar to the one we did with the Gradient method, but now with Nesterov's method. As before, we examine the trade-off between the magnitude of the multiplicative noise and the degradation of the optimal convergence rate. This time, we use $\delta\in\{0.05, 0.1,0.2,0.3,0.4,0.5\}$. See Figure~\ref{fig:Nesterov_pnoise}.
\begin{figure}[ht]
\centering
\includegraphics[width=0.8\linewidth]{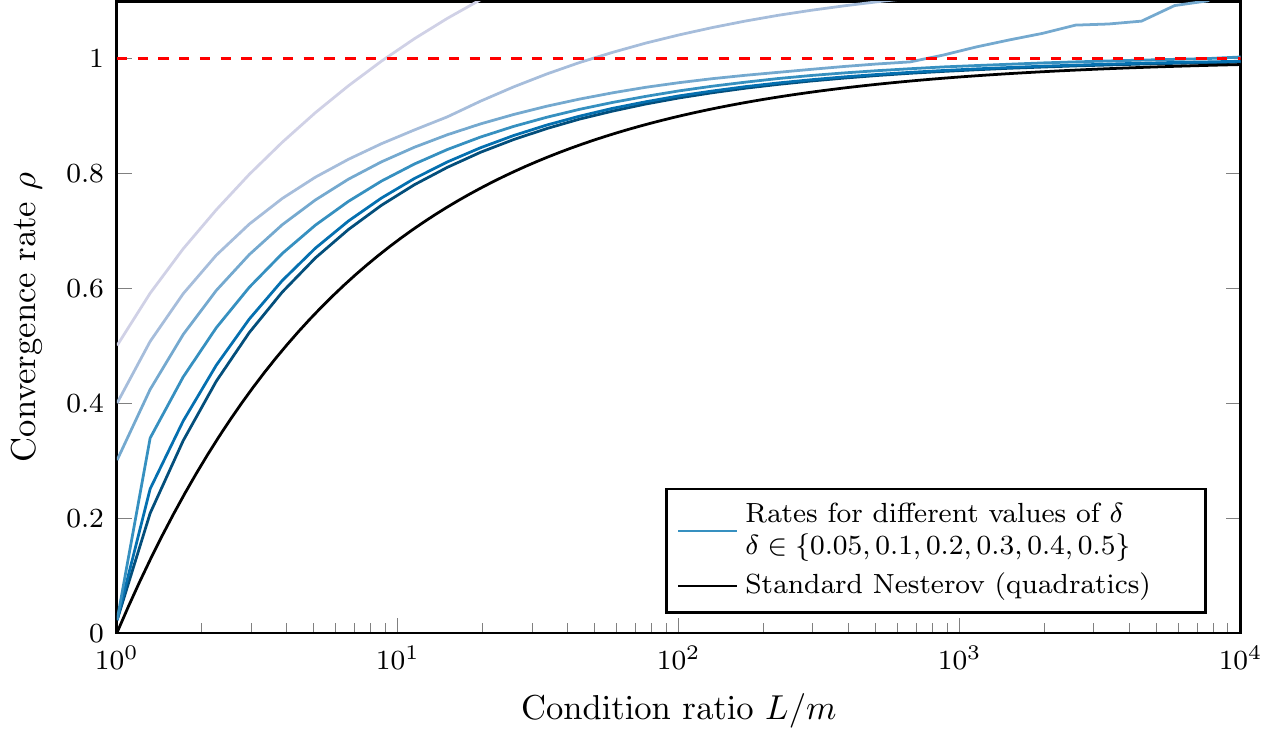}
\includegraphics[width=0.8\linewidth]{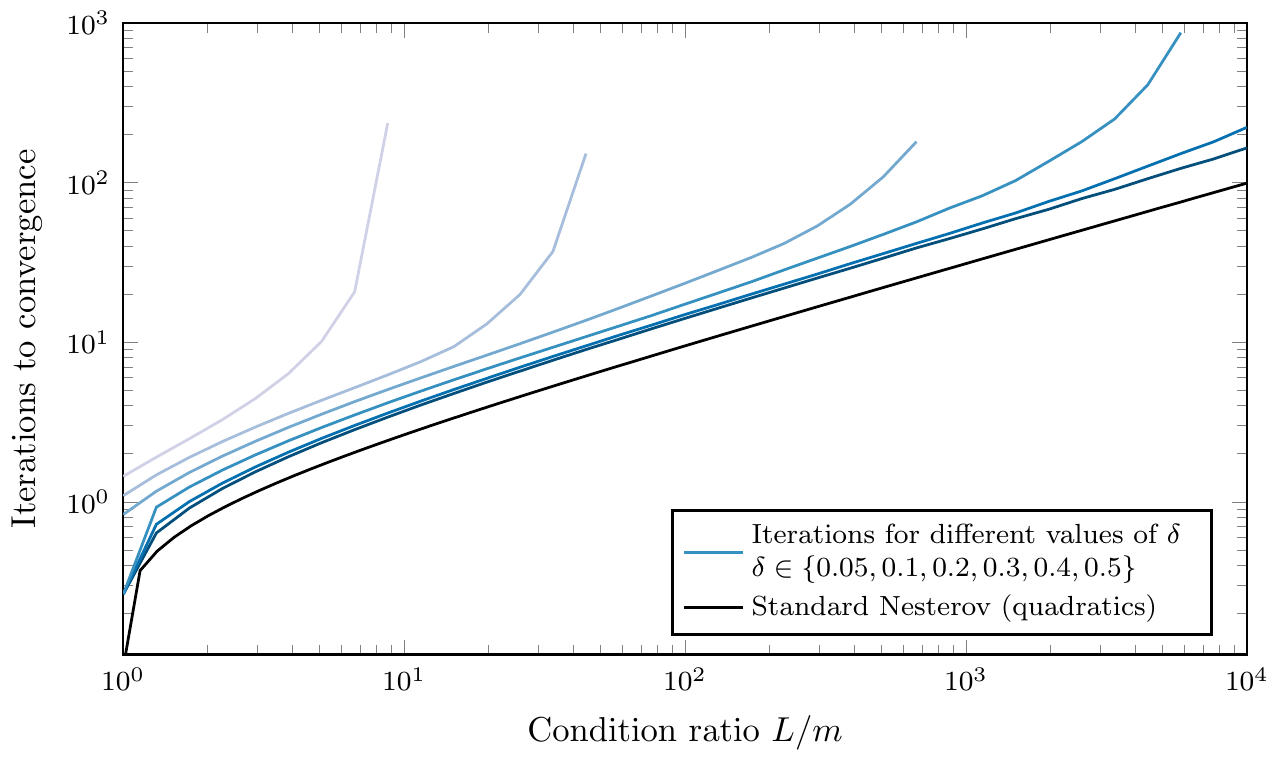}
\caption{
Convergence rate and iterations to convergence for Nesterov's method with standard tuning, for various noise parameters $\delta$.\label{fig:Nesterov_pnoise}}
\end{figure}

As with our first Gradient method test, Nesterov's method is not robust to multiplicative noise. For moderate $L/m$, the degradation is minor, but eventually leads to instability when we reach a certain threshold. The idea that accelerated methods are sensitive to noise and can lead to an accumulation of error was noted in the recent work \cite{devolder2014}, using a different notion of gradient perturbation.

Robustness of Nesterov's method can be improved by modifiying the $\alpha$ and $\beta$ parameters. Choosing a smaller $\alpha$ pushes back the instability threshold, while choosing a smaller $\beta$ simultaneously pushes back the instability threshold and degrades the rate. In the limit $\beta\to 0$, Nesterov's method becomes the Gradient method, so we recover the plots of Figure~\ref{fig:Gradient_pnoise_rate_2}.

\subsection{Proximal point methods}\label{sec:prox-point}

Suppose we are interested in solving a problem of the form
\begin{align*}
\text{minimize}\qquad& f(x) + P(x) \\
\text{subject to}\qquad& x\in\R^n
\end{align*}
where $f\in\cvx$, and $P$ is an extended-real-valued convex function on $\R^n$. An example of such a problem is constrained optimization, where we require that $x\in C$. In this case, we simply let $P$ be the indicator function of $C$. We will now show how the IQC framework can be used to analyze algorithms involving a \emph{proximal} operator. Define the proximal operator of $P$ as
\[
\Pi_{\nu}(x) \defeq \arg\min_y \bl( \tfrac12\|x-y\|^2 + \nu P(y) \br)
\]
As an illustrative example, we will show how to analyze the proximal version of Nesterov's algorithm. Iterations take the form:
\begin{equation}\label{eq:nest_prox}
\begin{aligned}
\xi_{k+1} &= \Pi_\nu\left( y_k - \alpha \grad f(y_k) \right) \\
y_k &=\xi_k + \beta (\xi_k - \xi_{k-1})
\end{aligned}
\end{equation}
Note that when $\Pi_\nu = I$, we recover the standard Nesterov algorithm.  When $\beta=0$, we recover the proximal gradient method. 

In order to analyze this algorithm, we must characterize $\Pi_\nu$ using IQCs. To this end, let $T \defeq \partial P$ be the subdifferential of $P$. Then, $\Pi_\nu(x)$  is the unique point such that $x-\Pi_\nu(x) \in \nu T(\Pi_\nu(x))$. Or, written another way,
\begin{equation}\label{PiT}
\Pi_\nu = (I + \nu T)^{-1}
\end{equation}
Since $T$ is a subdifferential, it satisfies the \emph{incremental passivity} condition. Namely,
\[
(T(x)-T(y))^\tp (x-y) \ge 0
\qquad\text{for all }x,y\in\R^n
\]
Therefore, $T$ satisfies the sector IQC with $m=0$ and $L=\infty$. In fact, via minor modifications of Lemma~\ref{lem:offbyone} and Lemma~\ref{lem:zames-falb} using the definition of a subdifferential rather than~\eqref{eq:coercive2}, $T$ satisfies the off-by-one and weighted off-by-one IQCs as well. Now transform~\eqref{eq:nest_prox} by introducing the auxiliary signals
$u_k\defeq \grad f(y_k)$, $w_k\defeq \Pi_\nu(y_k-\alpha u_k)$, $v_k\defeq \nu T(w_k)$. 
The definitions of $w_k$ and $v_k$ together with~\eqref{PiT} immediately imply that $w_k = y_k - \alpha u_k - v_k$. Therefore, we can rewrite~\eqref{eq:nest_prox} as
\begin{align*}
\begin{aligned}
\xi_{k+1} &= \xi_k + \beta(\xi_k-\xi_{k-1}) - v_k - \alpha u_k \\
w_k &= \xi_k + \beta (\xi_k - \xi_{k-1}) - v_k - \alpha u_k \\
y_k &= \xi_k + \beta (\xi_k - \xi_{k-1})
\end{aligned}
& & \text{with:} & &
\begin{aligned}
u_k &= \grad f(y_k) \\
v_k &= \nu T(w_k)
\end{aligned}
\end{align*}
These equations may be succinctly represented as a block diagram, as in Figure~\ref{fig:pi}.
\begin{figure}[ht]
\centering
\includegraphics{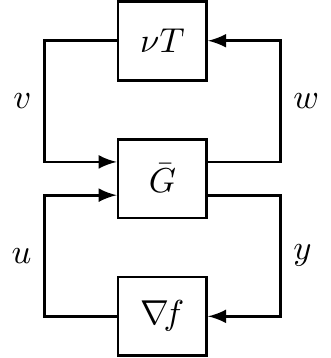}
\caption{Block-diagram representation of the standard interconnection with an additional block $\nu T$ representing a scaled subdifferential.\label{fig:pi}}
\end{figure}

Analyzing this interconnection is done by accounting for the IQCs for both unknown blocks. If $(\Psi_1,M_1)$ is the IQC for $\grad f$ with output $z^1_k$ and $(\Psi_2,M_2)$ is the IQC for $\nu T$ with output $z^2_k$, then we seek to show that for all trajectories satisfy
\begin{equation}\label{eq:projlmi}
x_{k+1}^\tp P x_{k+1} - \rho\, x_k^\tp P x_k + \lambda_1 (z_k^1)^\tp M_1 (z_k^1)
+ \lambda_2 (z_k^2)^\tp M_2 (z_k^2)
\le 0
\end{equation}
where $x_k$ now includes the states $\xi_k$ as well as the internal states of $\Psi_1$ and $\Psi_2$. As in the proof of Theorem~\ref{thm:main}, for each fixed $\rho$, we can write~\eqref{eq:projlmi} as an LMI in the variables $P\succ 0$, $\lambda_1\ge 0$, $\lambda_2\ge 0$.

Applying this approach to the proximal version of Nesterov's accelerated method, we recover the exact same plots as in Figure~\ref{fig:Nesterov_rate}. This is to be expected because it is known that the proximal gradient and accelerated methods achieves the same worst-case convergence rates as their unconstrained counterparts~\cite{beck2009fast,NesterovComposite,tseng2008accelerated}.  We conjecture that any algorithm $G$ of the form~\eqref{eq:lure} which converges with rate $\rho$ has a proximal variant that converges at precisely the same rate.

\subsection{Weakly convex functions}

With minor modifications to our analysis, we can immediately extend our results to the case where the function to be optimized is convex, but not strongly convex. Specifically, we will assume throughout this subsection that $f\in\cvxw$. The following development is due to Elad Hazan~\cite{HazanPersonal}.

Suppose we want to minimize $f$ over a compact, convex domain $\mathcal{D}$ for which we can readily compute the Euclidean projection.  Let $R$ denote the diameter of the set $\mathcal{D}$. Define the function $f_\epsilon(x) \defeq f(x)+\tfrac{\epsilon}{2R^2} \|x\|^2$. Note that $f_\epsilon$ is differentiable and strongly convex; it satisfies $f_\epsilon\in S(\tfrac{\epsilon}{R^2},L+\tfrac{\epsilon}{R^2})$. Therefore, we may apply our analysis to $f_\epsilon$.

Suppose we execute on $f_\epsilon$ an algorithm with interleaved projections as in Section~\ref{sec:prox-point}.  Let $x_\star$ be any minimizer of $f$ on $\mathcal{D}$ and $x_\star^{(\epsilon)}$ be the minimizer of $f_\epsilon$.   Let $\rho$ denote the rate of convergence achieved when the condition ratio is set as $\kappa = (1+LR^2/\epsilon)$ and let $P_\epsilon \succ 0$ be the associated solution to the LMI.  Let $\sigma \defeq \cond(P_\epsilon)$. After $k$ steps,
\begin{align*}
	f(x_k) - f(x_\star) &= f_\epsilon(x_k) - f_\epsilon(x_\star) + \frac{\epsilon}{2R^2} \left(\|x_\star\|^2 - \|x_k\|^2\right)\\
	 &\leq f_\epsilon(x_k) - f_\epsilon(x_\star^{(\epsilon)}) + \frac{\epsilon}{2R^2} \left(\|x_\star\|^2 - \|x_k\|^2\right)\\
	 &\leq f_\epsilon(x_k) - f_\epsilon(x_\star^{(\epsilon)}) + \frac{\epsilon}{2}
\end{align*}
Now apply~\eqref{eq:basicdef} from Proposition~\ref{prop:basic_properties} using $(f,x,y) = (f_\epsilon,x_\star^{(\epsilon)},x_k)$ and obtain
\begin{align*}
f(x_k) - f(x_\star)
&\leq  \frac{LR^2+\epsilon}{2R^2} \|x_k - x_\star^{(\epsilon)} \|^2 + \frac{\epsilon}{2}\\
&\leq \frac{LR^2+\epsilon}{2R^2}\, \sigma  \rho^{2k} \|x_0 - x_\star^{(\epsilon)}\|^2 + \frac{\epsilon}{2}\\
&\leq \frac12 \left( (LR^2+\epsilon)\, \sigma  \rho^{2k} + \epsilon \right)\,.
\end{align*}
Where the last inequality follows from the definition of set diameter. Therefore, if 
\begin{equation}\label{porpoise}
k \geq \frac{\log \bl( (1+LR^2/\epsilon)\, \sigma  \br)}{2\log ( \rho^{-1})}\,,
\end{equation}
then $f(x_k)-f(x_\star) \leq \epsilon$. Substituting the rates found algebraically for the quadratic case (Section~\ref{sec:quadratic_case}) or our numerical results for the strongly convex case (Sections~\ref{grad}--\ref{nest}), the convergence rate $\rho$ satisfies
\begin{align*}
\frac{1}{\log(\rho^{-1})} &\propto \kappa = (1+LR^2/\epsilon) & &\text{for the Gradient method, and} \\
\frac{1}{\log(\rho^{-1})} &\propto \kappa^{1/2} = (1+LR^2/\epsilon)^{1/2} & &\text{for Nesterov's accelerated method.}
\end{align*}
Finally, note that $\sigma = \cond(P_\epsilon)$ also depends on $\epsilon$. We can control the growth of $\sigma$ directly by including a constraint of the form $I \preceq P \preceq \sigma I$ when solving the SDP of Theorem~\ref{thm:main}. Alternatively, we can observe (see Figure~\ref{fig:Nesterov_cond}) that $\sigma \propto \kappa = (1+LR^2/\epsilon)$. Therefore, we conclude that
\begin{align*}
k &= \mathcal{O}(\tfrac{1}{\epsilon}\log{\tfrac{1}{\epsilon}})
&& \text{for the Gradient method, and} \\
k &= \mathcal{O}(\tfrac{1}{\sqrt{\epsilon}}\log{\tfrac{1}{\epsilon}})
&& \text{for Nesterov's accelerated method.}
\end{align*}
This analysis matches the standard bounds up to the logarithmic terms~\cite{NesterovBook}.

\section{Algorithm design}

In this section, we show one way in which the IQC analysis framework can be used for algorithm design. We saw in Section~\ref{sec:multiplicative_noise} that the Gradient method can be very robust to noise (Figure~\ref{fig:Gradient_pnoise_rate_2}), or not robust at all (Figure~\ref{fig:Gradient_pnoise_rate_1}), depending on whether we use a stepsize of $\alpha=1/L$ or $\alpha=2/(L+m)$, respectively.

A natural question to ask is whether such a trade-off between performance and robustness exists with Nesterov's method as well. As can be seen in Figure~\ref{fig:Nesterov_pnoise}, Nesterov's method is only somewhat robust to noise. In the sequel, we will synthesize variants of Nesterov's method that explore the performance-robustness trade-off space.

Consider an algorithm of the form~\eqref{eq:lure}. Based on the discussion in Section~\ref{sec:howdoweprove}, we know $A$ must have an eigenvalue of $1$. Moreover, given any invertible $T$, the algorithms $(A,B,C,D)$ and $(TAT^{-1},TB,CT^{-1},D)$ are \emph{equivalent realizations} in the sense that if one is stable with rate $\rho$, the other is stable with rate $\rho$ as well. Indeed, if the first algorithm has state $\xi_k$, the second algorithm has state $T\xi_k$. We limit our search to the case $A\in\R^{2\times 2}$ and $D=0$. Three parameters are required to characterize all possible algorithms in this family (modulo equivalences due to a choice of $T$). One possible parameterization is given by
\begin{equation}\label{eq:newhotness}
\stsp{A}{B}{C}{D} = 
\left[\begin{array}{cc|c}
\beta_1+1 & -\beta_1 & -\alpha \\
1 & 0 & 0 \\ \hlinet
\beta_2+1 & -\beta_2 & 0
\end{array}\right]
\qquad \text{with:}\quad(\alpha,\beta_1,\beta_2) \in \R^3
\end{equation}
In light of the discussion in Section~\ref{sec:opt-dyn}, we see that the Gradient, Heavy-ball, and Nesterov methods are all special cases of~\eqref{eq:newhotness}. In particular,
\[
(\alpha,\beta_1,\beta_2) \text{ is equal to: }
\begin{cases}
(\alpha,0,0) & \text{for the Gradient method} \\
(\alpha,\beta,0) & \text{for the Heavy-ball method} \\
(\alpha,\beta,\beta) & \text{for Nesterov's method}
\end{cases}
\]
We may also rewrite~\eqref{eq:newhotness} in more familiar recursion form as
\begin{subequations}\label{eq:newhotness2}
\begin{align}
\xi_{k+1} &= \xi_{k} - \alpha\grad f(y_k) + \beta_1(\xi_k - \xi_{k-1}) \\
y_{k} &= \xi_k + \beta_2(\xi_k - \xi_{k-1})
\end{align}
\end{subequations}
Our approach is straightforward: for each choice of condition ratio $L/m$ and noise strength~$\delta$, we generate a large grid of tuples $(\alpha,\beta_1,\beta_2)$ and use the approach of Section~\ref{sec:multiplicative_noise} to evaluate each algorithm. We then choose the algorithm with the lowest~$\rho$. In other words, given bounds on the condition ratio and noise strength, we choose the algorithm for which we can certify the best possible convergence rate over all admissible choices of $f$ and gradient noise. The performance of each optimized algorithm is plotted in Figure~\ref{fig:brute_force}.

\begin{figure}[ht]
\centering
\includegraphics[width=.8\linewidth]{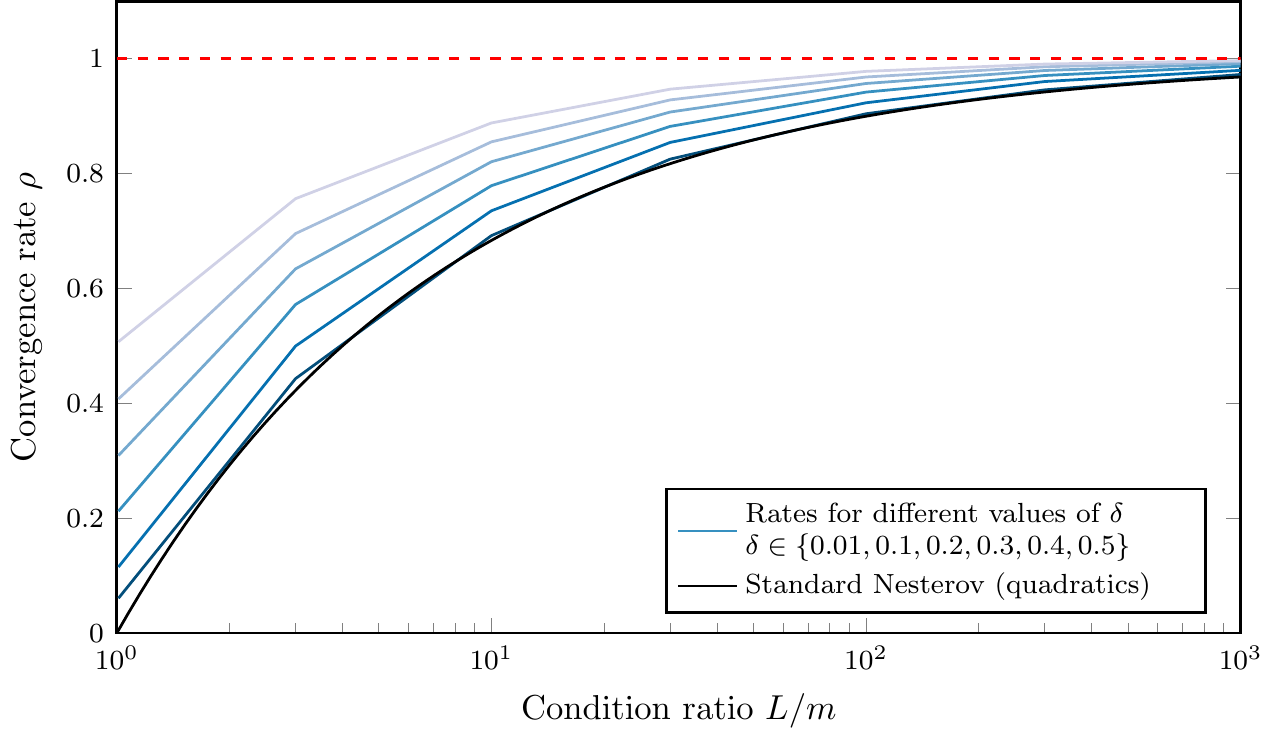} 
\includegraphics[width=.8\linewidth]{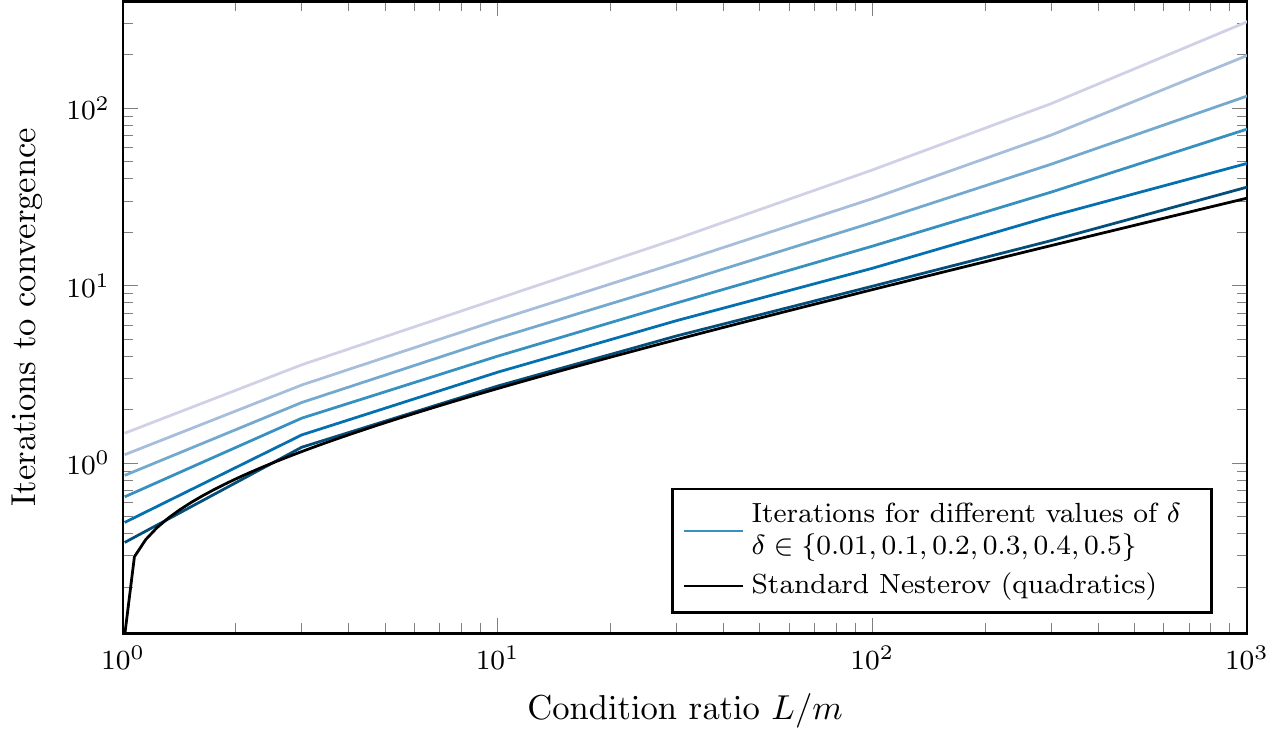}
\caption{Upper bounds found using a brute-force search over the three-parameter family of algorithms described by~\eqref{eq:newhotness2}. Convergence rate is shown (first plot) as is the number of iterations required to achieve convergence to a specified tolerance (second plot). Although the bounds assume strongly convex functions, we also show the worst-case rate for quadratics as a comparison.\label{fig:brute_force}}
\end{figure}

By design, this new family of algorithms must have a performance superior to the Gradient method, Nesterov's method, and the Heavy-ball method for any choice of tuning parameters.
In the limit $\delta\to 0$, we appear to recover the performance of Nesterov's method when it is applied to \emph{quadratics}.  That is, we have used numerical search to find an algorithm whose worst case performance guarantee is slightly better than what is guaranteed by Nesterov's method.

In the second plot of Figure~\ref{fig:brute_force}, the algorithms robust to higher noise levels have greater slopes. When the noise level is low ($\delta=0.01$), we approach a slope of 0.5, the same as Nesterov. When the noise level is high ($\delta=0.5$), the slope is roughly 0.75. Note that the Gradient method, which was robust for \emph{all} noise levels, has a slope of 1. Therefore, the new algorithms we found explore the trade-off between noise robustness and performance, and may be useful in instances where Nesterov's method would be too fragile and the Gradient method would be too slow.

\section{Future work}

We are only beginning to get a sense of what IQCs can tell us about optimization schemes, and there are many more control theory tools and techniques left to adapt to the context of optimization and machine learning. We conclude this paper with several interesting directions for future work.

\paragraph{Analytic proofs}  One of the drawbacks of our numerical proofs is that we are always pushing up against numerical error and conditioning error.  Analytic proofs would alleviate this issue and could provide more interpretable results about how parameters of algorithms should vary to meet performance and robustness demands.  To provide such analytic proofs, one would have to solve small LMIs in closed form.  This amounts to solving small semidefinite programming problems, and this may be doable using analytic tools from algebraic geometry~\cite{grayson2002macaulay, rostalski2010dualities}. 

\paragraph{Lower Bounds}  Our IQC conditions are merely sufficient for verifying the convergence of an optimization problem.  However, as pointed out by Megretski and Rantzer,  the derived conditions are necessary in a restricted sense~\cite{megrantzer}.  If we fail to find a solution to our LMI, then there is necessarily a sequence of point that satisfy all of the IQC constraints and that do not converge to an equilibrium~\cite{Megretski93,Shamma92}.  It is thus possible that this tool can be used to construct a convex function to serve as a counterexample for convergence. This intuition was what guided our construction of a counterexample for the convergence of the Heavy-ball method.  It may be possible that this construction can be generalized to systematically produce counterexamples.

\paragraph {Time-varying algorithms} In many practical scenarios, we know neither the Lipschitz constant $L$ nor the strong convexity parameter $m$.  Under such conditions, some sort of estimation scheme is used to choose the appropriate step size.  This could be a simple backoff scheme to ensure a sufficient decrease, or a more intricate search method to find the appropriate parameters~\cite{NocedalWrightBook}.  From our control vantage point, it may be possible to use techniques from adaptive control to certify when such line search methods are stable.  In particular, these could be used to differentiate between the different sorts of schemes used to choose the parameters of the nonlinear conjugate gradient method. Useful connections are made between robustness analysis of adaptive controllers and Lyapunov theory in~\cite{sastry2011adaptive}.

A related area of study is that of linear parameter varying (LPV) systems. This extension of linear systems analysis considers parameterized variations in the dynamical system matrices $(A,B,C,D)$. Algorithms with variable stepsize are examples of LPV systems. Some recent work discussing IQCs applied to LPV systems appeared in~\cite{SeilerLPVIQC}.
Another possible direction would be to use optimal control techniques directly to choose algorithm parameters, possibly solving a small SDP at every iteration to choose new assignments.

\paragraph{Algorithm synthesis} Perhaps even more ambitiously than using our framework for parameter selection, our initial results show that we can use IQCs as a way of designing new algorithms.  We restricted our attention to algorithms with one-step of memory, as then we only had to search over 3 parameters.  However, new techniques would be necessary to explore more complicated algorithms.  Local search heuristics could be used here to probe the feasible region of the associated LMIs, but convex methods and convex relaxations may also be applicable and should be investigated for these searches.  

\paragraph{Noise analysis} Our robustness analysis only allows us to consider certain forms of deterministic noise.  Expanding our techniques to study stochastic noise would expand the applicability of our techniques and could provide new insight into popular stochastic optimization algorithms such as stochastic coordinate descent and stochastic gradient descent~\cite{Nemirovski09,nesterov2012efficiency}.  Many of the most common techniques for proving convergence of stochastic methods rely on Lyapunov-type arguments, and we may be able to generalize this approach to account for the variety of different methods.  In order to expand our techniques to this space, we would need to introduce IQCs that were valid~\emph{in expectation}.  Stability methods from stochastic control may be applicable to such investigations.

\paragraph{Beyond convexity} Since our analysis decouples the derivation of constraints on function classes from the algorithm analysis, it is possible that it can be generalized to nonconvex optimization.  If we can characterize the function class by reasonable quadratic constraints, our framework immediately applies, and may lead to entirely new analyses for nonconvex function classes. For example, IQCs for saturating nonlinearities are readily available in the controls literature~\cite{Kulkarni99,megrantzer}.  From a complementary perspective, if we know that our function is not merely convex, but has additional structure, this can be incorporated as additional IQCs.  With extra constraints, it is possible that we can derive faster rates or more robustness for smaller function classes.

\paragraph{Non-quadratic Lyapunov functions}  There has been substantial work in the past decade on  efficient algorithms to search over \emph{non-quadratic} Lyapunov functions~\cite{papachristodoulou02,Parrilo2003}.  These techniques use sum-of-squares hierarchies to certify that non-quadratic polynomials are nonnegative, and still reduce to solving small semidefinite programming problems.  This more general class of Lyapunov functions could be better matched to certain classes of functions  than quadratics, and we could perhaps analyze more complicated algorithms and interconnections.

\paragraph{Large-scale composite system analysis}  Perhaps the most ambitious goal of this program is to move beyond convex models and attempt to analyze complicated optimization systems used in science and industry.  Powerful modeling languages like AMPL or GAMS allow for local analysis of large, complex systems, and certifying that the decisions about these systems are valid and safe would have impact in a variety of fields including process technology, web-scale analytics, and power management.  Since our methods nicely abstract beyond two interconnected systems, it is our hope that they can be extended to analyze the variety of optimization algorithms deployed to handle large, high throughput data processing.

%%%%%%%%%%%%%%%%%%%%%%%%%%%%%%%%%%%%%%%%%%%%%%%%%%%%%%%%%%%%%%%%%%%%%%%%%%%%%%%

\section*{Acknowledgments}
We would like to thank Peter Seiler for many helpful pointers on time-domain IQCs, Elad Hazan for his suggestion of how to analyze functions that are not strongly convex, and Bin Hu for pointing out a misreading of Nesterov's results in an earlier draft of this paper.  We would also like to thank Ali Jadbabaie, Pablo Parrilo, and Stephen Wright for many helpful discussions and suggestions.

LL and AP are partially supported by AFOSR award FA9550-12-1-0339 and NASA  Grant No.~NRA~NNX12AM55A. BR is generously supported by ONR awards N00014-11-1-0723 and N00014-13-1-0129, NSF award CCF-1148243, AFOSR award FA9550-13-1-0138, and a Sloan Research Fellowship.  This research was also supported in part by NSF CISE Expeditions Award CCF-1139158, LBNL Award 7076018, and DARPA XData Award FA8750-12-2-0331, and gifts from Amazon Web Services, Google, SAP, The Thomas and Stacey Siebel Foundation, Adobe, Apple, Inc., C3Energy, Cisco, Cloudera, EMC, Ericsson, Facebook, GameOnTalis, Guavus, HP, Huawei, Intel, Microsoft, NetApp, Pivotal, Splunk, Virdata, Fanuc, VMware, and Yahoo!.

\if\MODE2
\bibliographystyle{siam}
\else
\bibliographystyle{abbrv}
\fi

\bibliography{iqcopt}

\appendix

\section{Proof of Proposition~\ref{prop:quadratic-rates}}\label{A:prop1proof}

\noindent Suppose $Q$ has eigenvalues that satisfy
$0 < m \leq \lambda_d \le \lambda_{d-1} \le \cdots \le \lambda_2 \le \lambda_1 \leq L$.
Throughout, we assume $(A,B,C)$ are the state transition matrices of the algorithm we would like to analyze (see Section~\ref{sec:opt-dyn}). The state transition matrices are functions of the algorithm parameters (e.g. $\alpha$ and  $\beta$ for the Heavy-ball method). Let $T$ be the closed loop system $T \defeq A+BQC$. The worst-case convergence rate is found by maximizing the spectral radius over all admissible $Q$. In other words,
\begin{align*}
\rho_\textup{worst} = \maximize_{mI_d \preceq Q \preceq LI_d} \rho(T)
\end{align*}
The first observation is that for our algorithms of interest, we may assume $d=1$. To see why, take for example the Heavy-ball method, where
\[
T = \bmat{ (1+\beta)I_d - \alpha Q & -\beta I_d \\ I_d & 0_d }
\]
Write the eigenvalue decomposition of $Q$ as $Q= U \Lambda U^\tp$, where $\Lambda = \diag (\lambda_1,\lambda_2, \dotsc, \lambda_d)$ and $U$ is orthogonal. Then,
\[
T = \bmat{U & 0_d \\ 0_d & U}\bmat{ (1+\beta)I_d - \alpha \Lambda & -\beta I_d \\ I_d & 0_d }\bmat{U & 0_d \\ 0_d & U}^\tp
\]
Therefore, by similarity, the eigenvalues of $T$ are the eigenvalues of all the matrices
\[
T_i = \bmat{ (1+\beta) - \alpha \lambda_i & -\beta \\ 1 & 0 },
\quad i=1,2,\dots,d.
\]
and we may without loss of generality let $d=1$. The simplified problem is therefore
\[
\rho_\textup{worst} = \maximize_{m \le \lambda \le L} \rho(T_0)
\]
where $T_0$ is defined as
\[
T_0 = \begin{cases}
1 - \alpha \lambda & \text{Gradient method} \\
\bmat{ (1+\beta)(1-\alpha \lambda) & -\beta(1-\alpha\lambda) \\ 1 & 0 } & \text{Nesterov's method} \\[4mm]
\bmat{ 1+\beta - \alpha \lambda & -\beta \\ 1 & 0 } & \text{Heavy-ball method}
\end{cases}
\]
It is now a matter of algebraic substitution to find the optimal rates for each parameter choice. For example, with the Gradient method,
\begin{align*}
\rho_\textup{max} &= \maximize_{m \le \lambda \le L} \rho(1-\alpha \lambda) \\
&= \max \bbl\{ |1-\alpha m|, |1-\alpha L| \bbr\}
\end{align*}
The second equality follows from the fact that the maximum of a convex function must occur at the boundary. We can now see that when $\alpha = 1/L$, we have $\rho_\textup{max} = 1-1/\kappa$. Finding the optimal $\alpha$ is straightforward in this case because the pointwise maximum of convex functions is itself convex. In this case, the minimum $\rho_\textup{max}$ occurs when $\alpha = \tfrac{2}{L+m}$ and the result is $\rho_\textup{max} = \tfrac{\kappa-1}{\kappa+1}$.

The analyses for Nesterov's method and the Heavy-ball method are similar in spirit to that of the Gradient method, but computing the spectral radius is more complicated. For Nesterov's method, we have
\[
\rho_\textup{max} = \maximize_{m \le \lambda \le L} \,\max\bl\{ |\nu_1|, |\nu_2| \br\}
\]
where $\nu_1$, $\nu_2$ are the roots of the characteristic polynomial of $T_0$, which is
\[
\nu^2 - (1+\beta)(1-\alpha\lambda)\nu + \beta(1-\alpha\lambda) = 0
\]
The magnitudes of the roots satisfy:
\[
\max\bl\{ |\nu_1|, |\nu_2| \br\} = \begin{cases}
\tfrac{1}{2}\big|(1+\beta)(1-\alpha\lambda)\big| + \tfrac{1}{2}\sqrt{\Delta} & \text{if } \Delta \ge 0 \\
\sqrt{\beta(1-\alpha\lambda)} 	& \text{otherwise}
\end{cases}
\]
where $\Delta \defeq (1+\beta)^2(1-\alpha\lambda)^2 - 4\beta(1-\alpha\lambda)$.
It is straightforward to verify that if $\alpha,\beta$ are fixed, $\max\bl\{ |\nu_1|, |\nu_2| \br\}$ is a continuous and quasiconvex function of $\lambda$. So, the maximum over $\lambda$ must occur at boundary point. For the case where $\alpha=\tfrac{1}{L}$ and $\beta = \tfrac{\sqrt{\kappa}-1}{\sqrt{\kappa}+1}$, choosing $\lambda=L$ yields zero, so the maximum  must be achieved at $\lambda=m$, which yields
\[
\rho_\textup{max} = \sqrt{\frac{\sqrt{\kappa}-1}{\sqrt{\kappa}+1}\left(1-\frac{1}{\kappa}\right)}
= \sqrt{\frac{\sqrt{\kappa}-1}{\sqrt{\kappa}+1} \cdot \frac{(\sqrt{\kappa}+1)(\sqrt{\kappa}-1)}{\kappa}}
= 1 - \frac{1}{\sqrt{\kappa}},
\]
as required. When optimizing over quadratic functions, the above tuning of Nesterov's method is suboptimal. Finding the choice of $\alpha$ and $\beta$ that yields the smallest $\rho_\textup{max}$  requires careful examination of several subcases, and we omit the details in the interest of space. The result is shown in the second last row of the table in Proposition~\ref{prop:quadratic-rates}.

A similar eigenvalue analysis was used in \cite{polyak1987introduction} to derive the optimal parameter tuning for the Heavy-ball method applied to quadratics (last row of the table).

\section{Proof of the Heavy-ball counterexample}\label{appendix:hb_proof}

We would like to minimize the function whose gradient is given by~\eqref{counterexample}. The Heavy-ball method with $L=25$ and $m=1$ is given in Section~\ref{sec:quadratic_case}:
\begin{align}\label{hbsp}
x_{k+1} &= \tfrac{13}{9}x_k - \tfrac{4}{9} x_{k-1} -\tfrac{1}{9}\grad f (x_k)
\end{align}
where we use the initialization $x_{-1}=x_0$. Based on the plot of Figure~\ref{fig:hb_counterexample}, we will look for limit points $p,q,r$ such that we have a cycle of period 3:
\begin{equation}\label{hhhhh}
\begin{aligned}
x_{3n} &\to p,  &
x_{3n+1} &\to q, &
x_{3n+2} &\to r
\end{aligned}
\qquad\text{for }n=0,1,\dots
\end{equation}
where $p<1$, $q<1$, and $r > 2$. Substituting the forms~\eqref{hhhhh} and~\eqref{counterexample} directly into~\eqref{hbsp}, we obtain the system of linear equations 
\[
\bmat{ 4 & 12 & 9 \\ 9 & 4 & 12\\ 12 & 9 & 4 }\bmat{p \\ q \\ r} = \bmat{0 \\ 24 \\ 0}
\]
and the unique solution of these equations is
\begin{equation}\label{asdf}
p=\frac{792}{1225} \approx 0.65,\quad
q=-\frac{2208}{1225} \approx -1.80,\quad
r=\frac{2592}{1225} \approx 2.12
\end{equation}
In other words, the trajectory~\eqref{hhhhh} with values~\eqref{asdf} is a fixed point of~\eqref{hbsp}. Let us call this limit sequence $\{x_k^\star\}_{k \ge 0}$. In order to show that the limit cycle is attractive (nearby trajectories will eventually converge to the cycle) consider a perturbed version of this sequence $\{x_k^\star+\epsilon_k\}_{k \ge 0}$. If we assume that the $k^\text{th}$ iterate still belongs to the same piece of the function (e.g. if $x^\star_k < 1$ then $x^\star_k+\epsilon_k < 1$, and if $x^\star_k > 2$ then $x^\star_k+\epsilon_k >2$) then we can use the Heavy-ball equations to compute the perturbation in the subsequent iterate. Upon doing so, we find that $\{\epsilon_k\}_{k \ge 0}$ must satisfy
\[
\bmat{\epsilon_{k+2}\\\epsilon_{k+1}} = \underbrace{\bmat{ -\tfrac{4}{3} & -\tfrac{4}{9} \\ 1 & 0 }}_P \bmat{ \epsilon_{k+1} \\ \epsilon_{k} }
\qquad\text{for all }k
\]
It is immediate that $\rho(P) = \tfrac23 < 1$ so as long as no transient value of $\epsilon_k$ strays too far from zero and causes an unscheduled crossing of the dotted lines on Figure~\ref{fig:hb_counterexample}, then we will have $\epsilon_k\to 0$ and the limit cycle will be attractive. Undesired transient behavior can be ruled out by ensuring that the error eventually decreases monotonically. One can easily verify that
\[
\bl\|P^8\br\|^2 \approx 0.46044 < \frac12
\]
where $\|\cdot\|$ is the induced 2-norm. Therefore, $P^8$ is a contraction, and we have:
\begin{equation}\label{qwer}
\epsilon_{k+8}^2 
	\le \bbbl\|\bmat{\epsilon_{k+9}\\\epsilon_{k+8}}\bbbr\|^2
	\le \bl\|P^8\br\|^2 \bbbl\|\bmat{\epsilon_{k+1}\\\epsilon_{k}}\bbbr\|^2
	< \frac{1}{2}(\epsilon_{k+1}^2 + \epsilon_k^2)
\end{equation}
If eight consecutive perturbations $\{\epsilon_i^2,\epsilon_{i+1}^2,\dots,\epsilon_{i+7}^2\}$ are each less than some $\bar\epsilon^2$, then apply~\eqref{qwer} twice to conclude that
\[
\epsilon_{i+8}^2 < \frac{1}{2}(\epsilon_i^2 + \epsilon_{i+1}^2) < \bar \epsilon^2
\qquad\text{and}\qquad
\epsilon_{i+9}^2 < \frac{1}{2}(\epsilon_{i+1}^2 + \epsilon_{i+2}^2) < \bar \epsilon^2
\]
Continuing in this fashion, we conclude that the entire tail $\{\epsilon_k^2\}_{k\ge i}$ also satisfies the bound $\epsilon_k^2 < \bar\epsilon^2$. The closest that our proposed limit cycle comes to a transition point of $f(x)$ (either $1$ or $2$) is $r-2 = \tfrac{142}{1225} \approx 0.1159$. Therefore, if we set this number to be $\bar\epsilon$, and we can find eight consecutive iterates of the Heavy-ball method that are each within $\bar\epsilon$ of the limit cycle, then the remaining iterates must converge exponentially to the limit cycle.
It is straightforward to check that if $x_0=3.3$, then the iterates $x_4,x_5,\dots,x_{11}$ are each within a distance $\bar \epsilon$ of their respective limit points. Therefore, the limit cycle is attractive.

\end{document}